\def\grad{\text{grad}}
\numberwithin{equation}{section}
\theoremstyle{plain}
\newtheorem{theorem}{Theorem}[section]
\newtheorem{proposition}[theorem]{Proposition}
\newtheorem{corollary}[theorem]{Corollary}
\newtheorem{lemma}[theorem]{Lemma}
\newtheorem{definition}[theorem]{Definition}
\newtheorem{remark}[theorem]{Remark}
\newtheorem{claim}[theorem]{Claim}
\title{Oscillatory integral operators on manifolds and related Kakeya and Nikodym problems}
\author{Song Dai, Liuwei Gong, Shaoming Guo, Ruixiang Zhang}
\date{}
\begin{document}

\maketitle

\begin{abstract}

We consider  Carleson-Sj\"{o}lin operators on Riemannian manifolds that arise naturally from the study of Bochner-Riesz problems on manifolds. They are special cases of H\"{o}rmander-type  oscillatory integral operators. We obtain improved $L^p$ bounds of Carleson-Sj\"{o}lin operators in two cases: The case where the underlying manifold has constant sectional curvature and the case where the manifold satisfies Sogge's chaotic curvature condition \cite{Sog99}. 

The two results rely on very different methods: To prove the former result, we show that on a Riemannian manifold, the distance function satisfies Bourgain's condition in \cite{GWZ22} if and only if the manifold has constant sectional curvature. To obtain the second result, we introduce the notion of  ``contact orders'' to H\"{o}rmander-type oscillatory integral operators, prove that if a H\"{o}rmander-type oscillatory integral operator is of a finite contact order, then it always has better $L^p$ bounds than ``worst cases'' (in spirit of Bourgain and Guth \cite{BG11} and Guth, Hickman and Iliopoulou \cite{GHI19}), and eventually verify that for Riemannian manifolds that satisfy Sogge's chaotic curvature condition, their distance functions alway have finite contact orders. 

As byproducts, we obtain new bounds for Nikodym maximal functions on manifolds of constant sectional curvatures. 
%
%
%
%
%
%

\end{abstract}

%
%
%

\tableofcontents

\section{Introduction}

We study several problems in harmonic analysis and their connections, including H\"ormander-type oscillatory integrals, Carleson-Sj\"olin operators on manifolds, curved Kakeya problems and Nikodym problems on manifolds. In the introduction, we will introduce these four problems, and give a brief review of known results. Experts can skip the slightly long introduction and go to Section \ref{231024section2} directly, where new results are stated. Notations are listed at the end of the introduction.

\subsection{H\"ormander-type oscillatory integrals}

Consider H\"ormander-type oscillatory integral operators
\begin{equation}\label{230717e1_1}
T^{(\phi)}_N f(x, t):=\int_{\mathbb{R}^{n-1}} e^{i N \phi(x, t; y)} a(x, t; y) f(y) \mathrm{d} y.
\end{equation}
Here $x\in \R^{n-1}, t\in \R, y\in \R^{n-1}$ and $N\in \R$ is a large real number. To simplify notation, we often write $\bfx=(x, t)$. Moreover,  $a(x, t; y)$ is a smooth function supported in a bounded open neighborhood of the origin. On the support of $a(x, t; y)$, let us assume that $\phi$ is a smooth function and that 
\begin{enumerate}
\item[(H1)] $\rank \nabla_{x}\nabla_y \phi(x, t; y)=n-1;$ 
\item[(H2)] if we define 
\begin{equation}
G_0(\bfx; y):=\partial_{y_1} \nabla_{\bfx} \phi(\bfx; y)\wedge\dots\wedge\partial_{y_{n-1}} \nabla_{\bfx} \phi(\bfx; y),
\end{equation}
then 
\begin{equation}
\left.\operatorname{det} \nabla_{y}^2\left\langle\nabla_{\mathbf{x}} \phi(\mathbf{x} ; y), G_0\left(\mathbf{x} ; y_0\right)\right\rangle\right|_{y=y_0} \neq 0.
\end{equation}
\end{enumerate}
The function $\phi(x, t; y)$ will be refereed to as the \underline{phase function} of the operator $T_N^{(\phi)}$, and $a(x, t; y)$ will be refereed to as its amplitude function. By saying that the phase function $\phi(x, t; y)$ satisfies \underline{H\"ormander's non-degeneracy condition}, we mean that it satisfies (H1) and (H2) above.  \\

We are interested in proving estimates of the form 
\begin{equation}\label{230324e1_5}
\norm{T^{(\phi)}_N f}_{L^p(\R^n)} \lesim_{\phi, a, p, \epsilon} N^{-\frac{n}{p}+\epsilon} \norm{f}_{L^p(\R^{n-1})},
\end{equation}
for every $\epsilon>0$ and every $N\ge 1$, and for a range of exponents $p$ that is as large as possible. \\

For estimates of the form \eqref{230324e1_5}, the simplest and perhaps the most interesting phase function  is 
\begin{equation}\label{230812e1_7}
\phi(x, t; y)= x\cdot y+ t|y|^2.
\end{equation}
{\bf Fourier restriction conjecture.} The estimate \eqref{230324e1_5} holds for all 
\begin{equation}\label{230813e1_8}
p\ge \frac{2n}{n-1},
\end{equation}
with the phase function $\phi(x, t; y)$ given by \eqref{230812e1_7}.\\

H\"ormander \cite{Hor73} asked whether for phase functions $\phi$ satisfying (H1) and (H2), the estimate \eqref{230324e1_5} could still hold for the same range of $p$ as in \eqref{230813e1_8}. Let us briefly review known results on \eqref{230324e1_5}.

To simplify our discussion, we will always work in a small neighborhood of the origin, and therefore we will pick a sufficiently small $\epsilon_{\phi}>0$ depending on $\phi$, and assume that $a(x, t; y)$ is supported in $\B^{n-1}_{\epsilon_{\phi}}\times \B^1_{\epsilon_{\phi}}\times \B^{n-1}_{\epsilon_{\phi}}$.   We often without loss of generality assume that $\phi(\bfx; y)$ is in its normal form at the origin, that is, 
\begin{equation}\label{230324e1_4}
\phi(\bfx; y)=x\cdot y+ t\inn{y}{Ay}+ O(|t| |y|^3+ |\bfx|^2 |y|^2),
\end{equation}
where $A$ is an $(n-1)\times (n-1)$ non-degenerate matrix. Normal forms \eqref{230324e1_4} were introduced by H\"ormander \cite{Hor73} and Bourgain \cite{Bou91} to simplify calculations. It is elementary to see (\cite[page 323]{Bou91}) that, after simple transformations, all phase functions $\phi(x, t; y)$ satisfying H\"ormander's non-degeneracy condition can be written in normal forms. \\

\begin{theorem}[Hickman and Iliopoulou, \cite{HI22}]\label{230409theorem1_1}
Let $\phi$ be a phase function of the form \eqref{230324e1_4}. Let $s_0$ be the signature of the matrix $A$.  Then \eqref{230324e1_5} holds for all 
\begin{equation}\label{230409e1_6}
p \geqslant 
\begin{cases}
2 \cdot \frac{s_0+2(n+1)}{s_0+2(n-1)} & \text { if } n \text { is odd } \\ 
2 \cdot \frac{s_0+2 n+3}{s_0+2 n-1} & \text { if } n \text { is even }
\end{cases}
\end{equation}
\end{theorem}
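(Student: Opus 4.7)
The plan is to follow the polynomial-partitioning plus broad--narrow framework of Bourgain--Guth \cite{BG11}, as extended to general H\"ormander-type operators by Guth, Hickman and Iliopoulou \cite{GHI19}, while extracting a quantitative benefit from the signature $s_0$ through a refined analysis of wave packets tangent to algebraic varieties.

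After the normalization \eqref{230324e1_4}, I would first perform a wave packet decomposition of $f$ at frequency scale $N^{-1/2}$: each resulting wave packet $\psi_{\theta,v}$ is essentially supported on an $N^{1/2+\delta}$-tube $T_{\theta,v} \subset \mathbb{R}^n$ obtained by transporting the frequency cap $\theta$ along the Hamilton flow associated to $\phi$. Following Guth, one then chooses a partitioning polynomial $P$ of controlled degree whose zero set $Z(P)$ divides $\mathbb{R}^n$ into cells on which $T^{(\phi)}_N f$ carries roughly equal $L^p$ mass, and decomposes $T^{(\phi)}_N f$ into cellular, transverse boundary, and tangential boundary contributions. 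The cellular piece is handled by induction on the scale $N$; the transverse piece is handled by the multilinear oscillatory integral / multilinear Kakeya estimate of Bennett--Carbery--Tao, which is essentially sharp and insensitive to $s_0$.

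The signature enters through the tangential piece. Iterating the polynomial partitioning inside $Z(P)$ reduces the problem to counting wave packets tangent to an algebraic variety $V \subset \mathbb{R}^n$ of some dimension $d$. The key geometric input is a signature-dependent tangency bound: for phases in normal form with signature $s_0$, each positive (respectively negative) eigendirection of $A$ imposes an independent non-degeneracy condition on the $y$-variable via (H2) and the Monge--Amp\`ere-type determinant $\det \nabla_y^2 \langle \nabla_{\mathbf{x}}\phi, G_0\rangle$. Consequently, wave packets cannot concentrate tangentially on varieties of dimension below a threshold that grows with $s_0$. Feeding this improved tangency count into the induction and balancing the cellular and tangential contributions at the critical $L^p$ exponent produces \eqref{230409e1_6}; the parity dependence on $n$ arises from the discrepancy between the real dimension of the partitioning variety in $\mathbb{R}^n$ and the number of ``usable'' transversality directions, which differs according to the parity of $n$ and of the codimension.

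The principal obstacle will be the quantitative signature-dependent tangency lemma: an upper bound on the number of $N^{1/2+\delta}$-tubes contained in the $N^{1/2+\delta}$-neighborhood of a bounded-degree $d$-dimensional variety, with a constant that improves as $s_0$ grows. This is essentially an inversion of Bourgain's 1991 Kakeya compression construction \cite{Bou91}: one linearizes the phase at each tangency point, exploits the non-degeneracy of the signed quadratic form that $A$ induces on the relevant tangent directions, and applies a B\'ezout-type incidence count. Synchronizing this lemma with the induction on $N$ (and a secondary induction on $d$) to close the recursion is technically delicate but follows the blueprint laid out in \cite{GHI19}.
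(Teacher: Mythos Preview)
This theorem is not proved in the present paper: it is stated in the introduction as a result of Hickman and Iliopoulou \cite{HI22} and simply cited, so there is no in-paper proof to compare your proposal against.

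That said, your outline is broadly aligned with the actual argument in \cite{HI22} (which in turn builds on \cite{GHI19}): wave packet decomposition, a $k$-broad reduction in the style of Bourgain--Guth/Guth, and an iterated polynomial partitioning with induction on scale and on the dimension of the variety. Where your sketch is imprecise is in the mechanism by which $s_0$ enters. It is not that ``wave packets cannot concentrate tangentially on varieties of dimension below a threshold that grows with $s_0$,'' nor is the key lemma really a B\'ezout-type incidence count obtained by inverting Bourgain's compression examples. In \cite{GHI19,HI22} the signature feeds in through the \emph{transverse equidistribution} estimate: when wave packets are tangent to a low-dimensional variety $V$, one rescales and analyses how their frequency caps project onto the tangent plane of $V$. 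The signature of $A$ controls the dimension of the maximal totally isotropic (``flat'') subspaces of the quadratic form, and hence how many independent transverse directions the tangential wave packets must fill out. Larger $s_0$ forces more genuine spreading in these transverse directions, which sharpens the $L^2$ bound on the tangential contribution at each step of the dimensional induction; balancing this against the cellular term is what produces the exponents in \eqref{230409e1_6}, with the parity of $n$ entering through the floor/ceiling in the dimension count for isotropic subspaces. Your proposed ``linearize and apply B\'ezout'' step would not by itself deliver this gain, so if you intend to write out a proof you should replace it with the transverse equidistribution lemma of \cite{GHI19} in its signature-dependent form from \cite{HI22}.
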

Hickman and Iliopoulou \cite{HI22}, by generalizing the examples constructed earlier by Bourgain \cite{Bou91}, Wisewell \cite{Wis05}, Minicozzi and Sogge \cite{MS97} and Bourgain and Guth \cite{BG11}, also showed that the range of $p$ given by \eqref{230409e1_6} is sharp. More precisely, there exists  a phase function of the form \eqref{230324e1_4} with $s_0$ being the signature of $A$, and the estimate \eqref{230324e1_5} fails for $p$ outside the ranges given in \eqref{230409e1_6}.\\

Several special cases of Theorem \ref{230409theorem1_1} are particularly interesting and were proven earlier.

\begin{theorem}[Stein \cite{Ste84}, Bourgain and Guth \cite{BG11}]\label{230816theorem1_2}
Let $\phi$ be a phase function of the form \eqref{230324e1_4}. Assume that $A$ is of smallest possible signature, that is, $\sgn(A)=1$ when $n$ is even and $\sgn(A)=0$ when $n$ is odd. Then 
\eqref{230324e1_5} holds for all 
\begin{equation}\label{230409e1_7a}
p \geqslant 
\begin{cases}
2 \cdot \frac{n+1}{n-1} & \text { if } n \text { is odd } \\ 
2 \cdot \frac{n+2}{n} & \text { if } n \text { is even }
\end{cases}
\end{equation}
\end{theorem}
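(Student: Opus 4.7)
The plan is to follow the broad/narrow decomposition of Bourgain and Guth \cite{BG11} combined with the multilinear restriction theorem of Bennett, Carbery and Tao in its variable-coefficient form. Since the claim is exactly the $s_0\in\{0,1\}$ specialization of Theorem \ref{230409theorem1_1}, one could alternatively deduce it directly from that later result; the older argument is sketched here because it makes the role of the signature transparent.

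First I would decompose the frequency domain $\mathbb{R}^{n-1}$ into caps $\theta$ of radius $K^{-1}$ for a large parameter $K$, writing $T_N^{(\phi)} f = \sum_\theta T_N^{(\phi)} f_\theta$, and carry out a wave packet decomposition at the natural scale $N^{-1/2}$ so that each $T_N^{(\phi)} f_\theta$ is essentially supported on a union of curved tubes in $\mathbb{R}^n$ with core directions given by the Gauss map $G_0(\mathbf{x}; y_\theta)$. Then, applying the broad/narrow dichotomy, a point $\mathbf{x}$ is declared $K^\alpha$-broad if at least $n$ significantly contributing caps $\theta_1,\ldots,\theta_n$ have pairwise $K^{-\alpha}$-transverse Gauss directions, and narrow otherwise. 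At broad points the variable-coefficient $n$-linear restriction estimate yields the $L^{2n/(n-1)}$ bound with only an $N^\epsilon$ loss, which after interpolation with the trivial $L^\infty$ bound suffices to handle the broad contribution within the claimed range. At narrow points, the significant caps cluster in a thin neighborhood of some hyperplane in $y$-space, so after parabolic rescaling the problem reduces to an oscillatory integral of H\"ormander type in one fewer effective dimension, and one closes by induction on the scale $N$.

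The main technical obstacle, and the reason for the smallest-signature hypothesis, is controlling the phase produced by the narrow reduction through the induction. When $A$ has minimal signature, each hyperplane slice of the second-order form $t\langle y, Ay\rangle$ yields a non-degenerate $(n-2)$-variable quadratic form of again minimal signature, so the inductive lower-dimensional operator still falls in the best-case class and the recursion closes at the exponent \eqref{230409e1_7a}. For larger signatures the narrow slicing can expose higher-signature sub-problems, ultimately forcing the weaker exponent \eqref{230409e1_6}. A secondary difficulty is verifying that the variable-coefficient multilinear estimate of \cite{BG11} applies uniformly through the induction: one must track the perturbative error terms $O(|t||y|^3+|\mathbf{x}|^2|y|^2)$ in \eqref{230324e1_4} and check that the transversality constants for broad caps remain bounded below after each parabolic rescaling.
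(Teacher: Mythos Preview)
The paper does not prove this theorem. It is stated in the introduction as a prior result attributed to Stein and Bourgain--Guth, and is explicitly presented as the special case $s_0\in\{0,1\}$ of Theorem~\ref{230409theorem1_1}. Your remark that one can simply specialize Theorem~\ref{230409theorem1_1} is therefore precisely how the paper handles the matter; there is nothing further to compare against.

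Regarding your direct sketch: the broad/narrow framework and the appeal to variable-coefficient multilinear restriction are correct in spirit, but your explanation of why the induction closes is not. The claim that ``each hyperplane slice of $t\langle y,Ay\rangle$ yields a non-degenerate $(n-2)$-variable form of again minimal signature'' is false: for $A=\operatorname{diag}(1,1,-1)$ the slice $\{y_1=y_3\}$ gives the degenerate form $y_2^2$, and the slice $\{y_3=0\}$ gives signature $2$, not $1$. More importantly, the Bourgain--Guth argument does not need any such signature-preservation property. Their bound in the range~\eqref{230409e1_7a} holds for \emph{every} H\"ormander phase, regardless of signature; the narrow step is parabolic rescaling followed by induction on the scale $N$, and one only needs that the rescaled phase remains of H\"ormander type, which is automatic. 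The minimal-signature hypothesis in the statement is there because that is where the range~\eqref{230409e1_7a} is sharp (by the examples in \cite{HI22}), not because the proof uses it. So your narrow-case mechanism should be replaced by straightforward rescaling and induction on scale, with no reference to signature at all.
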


\begin{theorem}[Lee \cite{Lee06}; Guth, Hickman, Iliopoulou, \cite{GHI19}]\label{230814theorem1_3}
Let $\phi$ be a phase function of the form \eqref{230324e1_4}. Assume that $A$ is positive definite. Then \eqref{230324e1_5} holds for all 
\begin{equation}\label{230409e1_7}
p \geqslant 
\begin{cases}
2 \cdot \frac{3n+1}{3n-3} & \text { if } n \text { is odd } \\ 
2 \cdot \frac{3n+2}{3n-2} & \text { if } n \text { is even }
\end{cases}
\end{equation}
\end{theorem}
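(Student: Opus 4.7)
The plan is to follow the polynomial-partitioning framework developed by Guth and adapted to H\"ormander-type operators by Guth--Hickman--Iliopoulou. The argument has three ingredients: a wave-packet decomposition adapted to $\phi$, a $k$-broad estimate proved by polynomial partitioning together with multilinear Kakeya, and a Bourgain--Guth broad-narrow reduction to pass from the $k$-broad bound to the linear estimate \eqref{230324e1_5}.

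\emph{Wave packets and $k$-broad norms.} For a parameter $1\le R\le N$, decompose $f=\sum_T f_T$ into wave packets indexed by tubes $T$ of radius $R^{1/2}$ and length $R$ in frequency space. On each $T$, the image $T_N^{(\phi)} f_T$ concentrates on a curved tube in physical space whose direction at $\bfx$ is read off from the Gauss map $y\mapsto G_0(\bfx;y)/|G_0(\bfx;y)|$. Because $A$ is positive definite, this Gauss map parametrises a (locally) convex hypersurface, so the family of curved tubes through any point enjoys genuine elliptic transversality. For $k\in\{2,\dots,n\}$ and a ball $B_R$ of radius $R$, define the $k$-broad norm $\|T_N^{(\phi)} f\|_{BL^p_{k,A}(B_R)}$ as in \cite{GHI19}: it discards contributions from points where the wave-packet tubes concentrate near any $(k-1)$-plane in direction space.

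\emph{$k$-broad estimate by polynomial partitioning.} The heart of the argument is to prove, by induction on $R$,
\begin{equation*}
\|T_N^{(\phi)} f\|_{BL^p_{k,A}(B_R)} \lesim_\epsilon R^\epsilon \|f\|_{L^2}^{2/p}\|f\|_{L^\infty}^{1-2/p}
\end{equation*}
for a suitable range of $p$ depending on $(k,n)$. Following Guth, at each scale one selects a polynomial $P$ of degree $\sim R^\epsilon$ whose zero set $Z(P)$ equipartitions the $L^p$-mass. In the cellular regime one applies the inductive hypothesis on each cell, summing using that each wave-packet tube enters only a few cells. In the algebraic regime one localises to a thin neighbourhood of $Z(P)$, separates wave packets into tangent and transverse families, and controls the $k$-linear interaction of the tangent packets via the multilinear Kakeya inequality of Bennett--Carbery--Tao for curved tubes. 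Positive-definiteness of $A$ is precisely what verifies the quantitative transversality needed for multilinear Kakeya at this step.

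\emph{From $k$-broad to linear, and the main obstacle.} A Bourgain--Guth broad-narrow dichotomy then converts the $k$-broad estimate into the linear bound \eqref{230324e1_5}: on each small ball the operator is either $k$-broad (handled above) or narrow near some $(k-1)$-plane, in which case induction on the dimension applied to an essentially lower-dimensional oscillatory integral controls the narrow piece. Optimising over $k$ subject to the constraint that multilinear Kakeya holds gives the exponents in \eqref{230409e1_7}: $2(3n+1)/(3n-3)$ for odd $n$ and $2(3n+2)/(3n-2)$ for even $n$. The main obstacle is the $k$-broad estimate, and within it the transversality verification for the curved tubes. Unlike the paraboloid, here tubes are slightly curved, so one must show that after rescaling to each partition cell the tubes are sufficiently straight and transverse that multilinear Kakeya applies with acceptable losses. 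Positive-definiteness of $A$ together with (H1), (H2) is exactly what makes this possible; the rest of the induction on scales is then standard.
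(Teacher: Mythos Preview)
The paper does not prove this theorem; it is stated in the introduction as a known result and attributed to Lee \cite{Lee06} and Guth--Hickman--Iliopoulou \cite{GHI19}, with no proof given in the paper itself. Your proposal correctly outlines the polynomial-partitioning/$k$-broad/broad-narrow scheme of \cite{GHI19}, which is indeed how the cited result is established, so there is nothing in the paper to compare against beyond the citation.
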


Recently, Guo, Wang and Zhang \cite{GWZ22} imposed extra assumptions on the phase function $\phi$ and proved \eqref{230324e1_5} for some $p$ that goes beyond the sharp range given by \eqref{230409e1_7}.

\begin{definition}[Bourgain's condition, \cite{Bou91}, \cite{GWZ22}]\label{230903definition1_4}
 Let $\phi$ be a phase function satisfying H\"ormander's non-degeneracy conditions. We say that it satisfies Bourgain's condition at $(\bfx_0; y_0)$ if 
\begin{equation}\label{230719e1_9}
\left(\left(G_0 \cdot \nabla_{\mathbf{x}}\right)^2 \nabla_{y}^2 \phi\right)\left(\mathbf{x}_0 ; y_0\right) \text { is a multiple of }\left(\left(G_0 \cdot \nabla_{\mathbf{x}}\right) \nabla_{y}^2 \phi\right)\left(\mathbf{x}_0 ; y_0\right) \text {. }
\end{equation}
The constant here is allowed to depend on $\bfx_0$ and $y_0$. 
\end{definition}

\begin{theorem}[Guo, Wang and Zhang \cite{GWZ22}]\label{231013theorem1_5}
Let $\phi$ be a phase function of the form \eqref{230324e1_4} with $A$ positive definite. Moreover, assume that $\phi$ satisfies Bourgain's condition for every $(\bfx_0; y_0)$. Then  \eqref{230324e1_5} holds for all 
\begin{equation}\label{230409e1_10}
p>p_{\mathrm{GWZ}}(n):= 2+\frac{2.5921}{n}+O\left(n^{-2}\right).
\end{equation}
\end{theorem}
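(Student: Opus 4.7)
The plan is to run Guth's polynomial partitioning strategy for oscillatory integrals, as developed in \cite{GHI19}, and to gain extra room in the tangential (narrow) case by using Bourgain's condition \eqref{230719e1_9}, which makes the phase $\phi$ behave, on wave packets tangent to an algebraic variety, much like the paraboloid phase \eqref{230812e1_7}.

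First I would perform the standard wave-packet decomposition of $f$ at scale $N^{-1/2}$, so that $T^{(\phi)}_N f$ decomposes into tubes of dimensions $N^{1/2}\times\cdots\times N^{1/2}\times N$. Applying the broad--narrow dichotomy of Bourgain--Guth, one reduces \eqref{230324e1_5} on a ball $B_R$ with $R\sim N$ to a $k$-broad $L^p$ estimate; the narrow contribution is handled by iterating the decomposition and using parabolic rescaling to close an induction on $N$. To prove the $k$-broad estimate, one selects a polynomial $P$ of degree $D$ giving an equipartition of the $k$-broad norm, and splits $B_R$ into the cells of $B_R\setminus N_{R^{1/2+\delta}}Z(P)$ plus a tangential neighborhood of $Z(P)$. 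B\'ezout's theorem bounds the number of cells a wave packet enters, so cellular contributions are handled by H\"older and induction on scale.

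The key step is the tangential contribution near $Z(P)$. In the paraboloid case, wave packets tangent to $Z(P)$ correspond to pieces of the paraboloid that are essentially lower-dimensional paraboloids, which lets one apply a lower-dimensional restriction-type bound. For a general H\"ormander phase this algebraic structure breaks down. Bourgain's condition \eqref{230719e1_9} is precisely what restores it: the requirement that $(G_0\cdot\nabla_\bfx)^2 \nabla_y^2\phi$ be a scalar multiple of $(G_0\cdot\nabla_\bfx)\nabla_y^2\phi$ lets one show that, after localizing to a small neighborhood of $Z(P)$ and performing a normal-form change of variables at scale $R^{1/2}$, the restricted operator is again of H\"ormander type in one dimension lower, still satisfies Bourgain's condition, and has $A$ positive definite. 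One then inducts on dimension and reenters the machinery of \cite{GHI19} on the restricted operator, obtaining a quantitatively better tangential estimate than is available for a general positive-definite phase.

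The numerical value $p_{\mathrm{GWZ}}(n)=2+2.5921/n+O(n^{-2})$ arises from optimizing the resulting recursive system over the partitioning degree $D$, the broadness parameter $k$, and the dimensional induction, in the same spirit as \cite{GHI19} but with the improved tangential input from the previous step. The main obstacle I anticipate is that tangential step: rigorously verifying that Bourgain's condition is algebraically preserved under the iterated polynomial-partitioning and wave-packet restriction procedure, and that it yields a quantitative tangential estimate with error terms controlled uniformly across scales. Showing that $(G_0\cdot\nabla_\bfx)\nabla_y^2\phi$ genuinely governs the linearization of $\phi$ tangent to $Z(P)$ at scale $R^{1/2}$, while propagating the positive-definite non-degeneracy of $A$ through the normal-form reduction \eqref{230324e1_4}, is the main technical heart of the argument.
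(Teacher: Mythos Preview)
This theorem is not proved in the present paper: it is quoted from \cite{GWZ22} as a known result in the introduction, and the paper gives no argument of its own for it. So there is no ``paper's proof'' to compare your proposal against.

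That said, judging from how the paper describes and reuses the machinery of \cite{GWZ22} (see the discussion around \eqref{230815e2_4}, and Subsections~\ref{230902subsection4_1}--\ref{230922subsection4_3}), the mechanism you identify as the ``main technical heart'' does not match what \cite{GWZ22} actually does. You propose that Bourgain's condition lets one restrict to a neighborhood of $Z(P)$, change variables, and obtain a genuine H\"ormander operator in one lower dimension that again satisfies Bourgain's condition, then induct on dimension. What the paper attributes to \cite{GWZ22} is different: Bourgain's condition is used to prove the \emph{strong polynomial Wolff axioms} for the curved tubes associated to $\phi$ (the paper cites \cite[Theorem~1.2, Theorem~6.2]{GWZ22} for this). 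These axioms control how many curved tubes can lie in a neighborhood of an algebraic variety, and they are precisely the input that lets the Hickman--Rogers / Hickman--Rogers--Zhang polynomial-partitioning framework run for the curved phase exactly as it does for the paraboloid. The quantitative improvement, and the specific exponent $p_{\mathrm{GWZ}}(n)$, come from feeding these nested Wolff axioms into the iterated partitioning algorithm, not from a literal reduction to a lower-dimensional H\"ormander operator. Your global architecture (wave packets, broad--narrow, polynomial partitioning, cellular versus tangential) is correct, but the role you assign to \eqref{230719e1_9} in the tangential step is not the one it plays in \cite{GWZ22}.
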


Bourgain's condition is very natural when studying H\"ormander-type oscillatory integrals. On the one hand, Bourgain \cite{Bou91} proved that if the phase function $\phi$ fails Bourgain's condition at some $(\bfx_0; y_0)$, then \eqref{230324e1_5} can not hold for all 
\begin{equation}
p\ge \frac{2n}{n-1},
\end{equation}
the range of the Fourier restriction conjecture (see \eqref{230813e1_8}). More precisely, there exists $p> 2n/(n-1)$ depending only on the dimension $n$ such that \eqref{230324e1_5} fails at this $p$. 

On the other hand, it is conjectured in \cite{GWZ22} that if $\phi$ satisfies Bourgain's condition at every $(\bfx_0; y_0)$, then \eqref{230324e1_5} holds for all $p\ge 2n/(n-1)$.

\subsection{Curved Kakeya problem}\label{230903subsection1_2}

Associated to H\"ormander-type oscillatory integrals, one can define Kakeya sets. Given a phase function $\phi(\bfx; y)$ satisfying (H1) and (H2), we pick $\epsilon_{\phi}>0$ to be a sufficiently small constant depending on $\phi$.

\begin{definition}[Curved tubes]\label{230617defi1_6}
For $y\in \B^{n-1}_{\epsilon_{\phi}}$, $\bfx\in \B^{n}_{\epsilon_{\phi}}$ and $0<\delta< \epsilon_{\phi}$, define
\begin{equation}
\begin{aligned}
& \Gamma^{(\phi)}_y(\bfx):=\left\{\bfx' \in \mathbb{R}^n\cap \B^n_{2\epsilon_{\phi}}: \nabla_y \phi(\bfx'; y)=\nabla_y \phi(\bfx; y)\right\} \\
& T_y^{\delta, (\phi)}(\bfx):=\left\{\bfx' \in \mathbb{R}^n\cap \B^n_{2\epsilon_{\phi}}:\left|\nabla_y \phi(\bfx'; y)-\nabla_y \phi(\bfx; y)\right|<\delta\right\}
\end{aligned}
\end{equation}
If $\bfx$ is of the form $(\omega, 0)$, that is, the last coordinate is $0$, then we often abbreviate $\Gamma^{(\phi)}_y(\bfx)$ and $T_y^{\delta, (\phi)}(\bfx)$ to $\Gamma^{(\phi)}_y(\omega)$ and $T_y^{\delta, (\phi)}(\omega)$. If it is clear from the context which $\phi$ is involved, we will abbreviate $\Gamma^{(\phi)}_y(\bfx), T_y^{\delta, (\phi)}(\bfx)$ to $\Gamma_y(\bfx), T_y^{\delta}(\bfx)$, respectively.  We will call $T^{\delta}_y(\omega)$ the $\delta$-tube associated to the phase function $\phi(\bfx; y)$ with frequency $y$ and initial location $\omega$; $\Gamma_y(\omega)$ will be called the central curve of $T^{\delta}_y(\omega)$. \\

\end{definition}

\begin{definition}[Curved Kakeya  sets] A set $E \subset \mathbb{R}^n$ with $\mc{L}^n(E)=0$ is a curved Kakeya set (associated to $\phi$ ) if for all $y \in \mathbb{B}^{n-1}_{\epsilon_{\phi}}$ there exists an $\omega \in \mathbb{B}^{n-1}_{\epsilon_{\phi}}$ such that $\Gamma_y(\omega) \subset E$. 
\end{definition}

\begin{definition}[Curved Kakeya maximal function, \cite{Bou91}]
Given a phase function $\phi(\bfx; y)$ satisfying (H1) and (H2).  For $y\in \B^{n-1}_{\epsilon_{\phi}}$ and $0<\delta< \epsilon_{\phi}$, we define 
\begin{equation}
\mc{K}^{(\phi)}_{\delta} f(y):=
\sup_{
\omega \in \mathbb{B}^{n-1}_{\epsilon_{\phi}}
}
\frac{1}{
\mc{L}^n(T_y^{\delta, (\phi)}(\omega))
}
\int_{T_y^{\delta, (\phi)}(\omega)}
|f|
\end{equation}
If it is clear from the context which $\phi$ is involved, then we often abbreviate $\mc{K}^{(\phi)}_{\delta} f$ to $\mc{K}_{\delta} f$.
\end{definition}

The problem of studying the Hausdorff dimensions of curved Kakeya sets will be referred to as \emph{curved Kakeya problems}. Among all the phase functions $\phi$, the one that is perhaps the most interesting is given by \eqref{230812e1_7}, that is, 
\begin{equation}\label{230812e1_7hhh}
\phi(x, t; y)= x\cdot y+ t|y|^2.
\end{equation}
 In this case, the central curve $\Gamma_y(\omega)$ becomes 
\begin{equation}
\{(x, t): x+ 2t y=\omega\}
\end{equation}
which is a straight line. Kakeya sets associated to \eqref{230812e1_7hhh} will be referred to as the traditional Kakeya sets, or the traditional straight line Kakeya sets.  Moreover, for this special phase function, we have \\

\noindent {\bf Kakeya conjecture.} Let $\phi$ be given by \eqref{230812e1_7hhh}. Then every Kakeya set associated to $\phi$ must have full Hausdorff dimension, that is, it must have Hausdorff dimension $n$. \\

\noindent {\bf Maximal Kakeya conjecture.} Let $\phi$ be given by \eqref{230812e1_7hhh}. Take $\epsilon_{\phi}=1$. Then the associated Kakeya maximal operator $\mc{K}_{\delta} $ satisfies 
\begin{equation}
\norm{\mc{K}_{\delta} f}_{L^n(\mathbb{B}^{n-1})} \lesim_{n, \epsilon} \delta^{-\epsilon} \norm{f}_{L^n(\R^n)},
\end{equation}
for every $\epsilon>0$ and every $\delta\in (0, 1)$. \\

It is well-known that 
\begin{equation}
\begin{split}
& \text{Fourier restriction conjecture}\\
  \implies & \text{Maximal Kakeya conjecture} \\
 \implies & \text{Kakeya conjecture}.
\end{split}
\end{equation}
Similar to the above implications, for H\"ormander-type oscillatory integrals and curved Kakeya problems, we also have

\begin{theorem}[Wisewell \cite{Wis05}]\label{230706thm1_8}
Given a phase function $\phi(\bfx; y)$ satisfying (H1) and (H2). Let $\epsilon_{\phi}>0$ be a sufficiently small constant depending on $\phi$. 
\begin{enumerate}
\item[(1)] Suppose that 
\begin{equation}\label{230324e1_5zzz}
\norm{T^{(\phi)}_N f}_{L^p(
\B^{n}_{\epsilon_{\phi}}
)} \lesim_{\phi, p, n, a} N^{-\frac{n}{p}} \norm{f}_{L^p(\R^{n-1})},
\end{equation}
for some $p>1$, every $N\ge 1$ and every $a$ supported in $\B^{n-1}_{\epsilon_{\phi}}\times \B^1_{\epsilon_{\phi}}\times \B^{n-1}_{\epsilon_{\phi}}$, then the curved Kakeya maximal function is of restricted weak type $(q, q)$ with norm at most $\delta^{
-2(
\frac{n}{q}-1
)
}$ where $q:=(p/2)'$. In particular, as $p\to \frac{2n}{n-1}$, we see that $q\to n$. 
\item[(2)] If \eqref{230324e1_5zzz} holds for all $p>\frac{2n}{n-1}$, the largest possible range, then every curved Kakeya set associated to $\phi$ must have  Hausdorff dimension $n$. 
\end{enumerate}
\end{theorem}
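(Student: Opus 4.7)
The plan is to run the classical Fefferman duality-plus-randomization argument relating oscillatory-integral bounds to Kakeya maximal bounds, carried out in variable-coefficient form as in Bourgain \cite{Bou91} and Wisewell \cite{Wis05}. Fix $\delta>0$ small, set $N=\delta^{-2}$, and pick a maximal $\delta$-separated family of frequencies $\{y_i\}\subset\B^{n-1}_{\epsilon_{\phi}}$ together with arbitrary initial positions $\{\omega_i\}\subset\B^{n-1}_{\epsilon_{\phi}}$. I would construct Knapp-type test functions $f_i$ of unit $L^\infty$ norm supported in $B(y_i,\delta)$, modulated by a linear phase chosen so that stationary-phase analysis---using (H1) to ensure that $y\mapsto\nabla_y\phi(\bfx;y)$ is a local diffeomorphism near $y_i$, and (H2) together with the normal form \eqref{230324e1_4} to supply transverse quadratic phase decay---forces $|T_N^{(\phi)}f_i|\approx\delta^{n-1}\chi_{T_{y_i}^{\delta,(\phi)}(\omega_i)}$ modulo rapidly decaying tails. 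Apply the hypothesized bound to $f=\sum_i r_i f_i$ with i.i.d.\ Rademacher signs $r_i\in\{\pm 1\}$, take expectations, and invoke Khintchine's inequality: the left side is controlled by $\delta^{n-1}\|\sum_i \chi_{T_i}\|_{L^{p/2}}^{1/2}$, while the right side is $\approx N^{-n/p}\,\#\{i\}^{1/p}\,\delta^{(n-1)/p'}$ by the frequency-support disjointness of the $f_i$. Bookkeeping these powers and invoking the standard duality between Kakeya maximal estimates at $L^q$ and $L^{q'}$ bounds on sums of tube indicators (with $q=(p/2)'$) yields precisely the restricted weak type $(q,q)$ bound for $\mc{K}_\delta^{(\phi)}$ with norm $\delta^{-2(n/q-1)}$.

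\textbf{Part (2).} Let $E$ be a curved Kakeya set and $E_\delta$ its $\delta$-neighborhood. For every $y\in\B^{n-1}_{\epsilon_{\phi}}$ there is $\omega$ with $\Gamma_y(\omega)\subset E$, hence $T_y^{\delta,(\phi)}(\omega)\subset E_{C\delta}$, which gives $\mc{K}_\delta^{(\phi)}(\chi_{E_{C\delta}})(y)\gtrsim 1$ throughout $\B^{n-1}_{\epsilon_{\phi}}$. Applying Part (1) along a sequence of exponents $p_k\downarrow 2n/(n-1)$, so that $q_k=(p_k/2)'\uparrow n$, gives $1\lesim \delta^{-2(n/q_k-1)}\,|E_{C\delta}|^{1/n}$, that is, $|E_\delta|\gtrsim\delta^{2n(n/q_k-1)}$. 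Since the exponent $2n(n/q_k-1)\to 0$, a standard Bourgain-style covering/pigeonhole argument converting Kakeya maximal bounds to Hausdorff dimension bounds yields $\dim_H(E)\geq n-\eta$ for every $\eta>0$, whence $\dim_H(E)=n$.

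\textbf{Main obstacle.} The technical heart is the wave-packet concentration claim in Part (1). For the flat phase $\phi=x\cdot y+t|y|^2$ the $f_i$ are genuine Knapp examples and the $T_N^{(\phi)}f_i$ are honest straight tubes; in the H\"ormander setting one must verify that the curved tube $T_{y_i}^{\delta,(\phi)}(\omega_i)$, defined via the level set of $\nabla_y\phi$, really coincides (up to negligible tails) with the stationary-phase region of the $y$-integral, and that the relevant Hessian is non-degenerate so as to provide quadratic phase decay transverse to the central curve $\Gamma_{y_i}(\omega_i)$. This localization lemma is the crux of Wisewell's variable-coefficient adaptation of Fefferman's argument and is precisely where H\"ormander's non-degeneracy conditions (H1)--(H2) enter.
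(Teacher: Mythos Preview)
The paper does not supply its own proof of this theorem: it is quoted from Wisewell \cite{Wis05}, and when the paper later needs the analogous implication (Theorem~\ref{230617thm2_1}(b) and Subsection~\ref{230904subsection5_3}) it explicitly refers the reader back to Wisewell's thesis \cite[page 26]{Wis03}. Your sketch is exactly the Fefferman--Bourgain randomization argument that Wisewell runs in the variable-coefficient setting, so there is no divergence in approach to discuss.

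Two small points on Part~(2): the restricted weak type $(q_k,q_k)$ bound gives $|E_{C\delta}|^{1/q_k}$ on the right, not $|E_{C\delta}|^{1/n}$ (harmless in the limit $q_k\uparrow n$, but worth writing correctly), and the displayed inequality you derive yields only a lower bound on $|E_\delta|$, hence full \emph{Minkowski} dimension; the passage to Hausdorff dimension genuinely needs the extra Bourgain covering/pigeonhole step you allude to, so that phrase is doing real work and should not be compressed further.
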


Curved Kakeya problems are studied intensively in Wisewell's thesis \cite{Wis03} and her paper \cite{Wis05}. We refer interested readers to these two works for more results she obtained.

\subsection{Carleson-Sj\"olin operators on manifolds}\label{230814sub1_3}

Let us work with a smooth Riemannian metric $\{g_{ij}(\bfx)\}_{1\le i, j\le n}$ defined on a small open neighborhood of $0\in \R^n$. The Riemannian manifold is denoted by $\mc{M}$. We will only study curvature properties of $\mc{M}$ near the origin. Let $\epsilonm>0$ be a small constant depending on $\mc{M}$.  Let $a(\bfx; \bfy): \mc{M}\times \mc{M}\to \R$ be a compactly supported smooth function supported on $\mathbb{B}^n_{\epsilonm}\times \mathbb{B}^n_{\epsilonm}$, and supported away from the diagonal. Define 
\begin{equation}\label{230509e1_1}
T_N^{(\mc{M})} f(\bfx):=\int_{\mc{M}} 
e^{i N\dist(\bfx, \bfy)} a(\bfx; \bfy) f(\bfy) d\bfy,
\end{equation}
where $\dist$ refers to the distance function on $\mc{M}$, and call it a \underline{Carleson-Sj\"olin operator} on the manifold $\mc{M}$. Our goal is to prove \begin{equation}\label{230407e1_9}
\norm{T^{(\mc{M})}_N f}_{L^p(\mc{M})} \lesim_{\mc{M}, p, a, \epsilon} N^{-\frac{n}{p}+\epsilon} \norm{f}_{L^p(\mc{M})},
\end{equation}
for every $\epsilon>0, N\ge 1$, and for a range of $p$ that is as large as possible.\\

If one takes the metric $g$ to the identity matrix at every point, then we have \\

\noindent {\bf Bochner-Riesz conjecture.} The estimate \eqref{230407e1_9} holds for all 
\begin{equation}
p\ge \frac{2n}{n-1},
\end{equation}
if $\mc{M}$ is taken to be the Euclidean space. \\

Moreover, Tao \cite{Tao99} proved that 
\begin{equation}
\text{Bochner-Riesz conjecture}\implies \text{Fourier restriction conjecture.}
\end{equation}
The study of Carleson-Sj\"olin operators on general Riemannian manifolds also has a long history, and the operator \eqref{230509e1_1} already appeared in Minicozzi and Sogge \cite{MS97} (see also Sogge \cite[page 290]{Sog17}). 
 To prove bounds of the form \eqref{230407e1_9}, we will follow the Carleson-Sj\"olin reduction (see Carleson-Sj\"olin \cite{CS72}). Let $\mc{M}'$ be a hyperplane of $\R^n$ intersecting $\B^n_{\epsilonm}$. Let $a(\bfx)$ be a smooth function supported on $\B^n_{\epsilonm}$ satisfying 
 \begin{equation}\label{230815e1_24}
 \dist\pnorm{
 \supp(a), \mc{M}'
  }>0.
 \end{equation}
To prove  \eqref{230407e1_9}, it suffices to prove 
\begin{equation}\label{230407e1_9z}
\norm{R^{(\mc{M}, \mc{M}')}_N f}_{L^p(\mc{M})} \lesim_{\mc{M}, \mc{M}', p, a, \epsilon} N^{-\frac{n}{p}+\epsilon} \norm{f}_{L^p(\mc{M}')},
\end{equation}
for all $f$ supported on $\mc{M}'\cap\B^n_{\epsilonm}$, where 
\begin{equation}
R_N^{(\mc{M}, \mc{M}')} f(\bfx):=\int_{\mc{M}'}
e^{i N\dist(\bfx, \bfy)} a(\bfx) f(\bfy) d\mc{H}^{n-1}(\bfy).
\end{equation}
We will call $R^{(\mc{M}, \mc{M}')}_N$ a \underline{reduced Carleson-Sj\"olin operator} on the manifold $\mc{M}$.\\

We will see below that the range of $p$ for which \eqref{230407e1_9z} holds often determines curvature properties of the manifold $\mc{M}$ near the origin. \\

\subsection{Nikodym problems on manifolds}\label{230617sub1_4}

Take the manifold $\mc{M}$ as in Subsection \ref{230814sub1_3}. Recall that $\epsilonm>0$ is a small real number that is allowed to be sufficiently small depending on $\mc{M}$, and that we use $\dist$ to denote its distance function.  For $\bfx\in \B^n_{\epsilonm/2}$, we use $\gamma_{\bfx}$ to denote the portion of a geodesic passing through $\bfx$ that lies in $\B^n_{\epsilonm}$. Moreover, for $\lambda\in (0, 1)$, denote 
\begin{equation}
\gamma_{\bfx, \lambda-\mathrm{trun}}:=\{\bfx'\in \gamma_{\bfx}: \dist(\bfx, \bfx')\ge 1-\lambda\}.
\end{equation}

\begin{definition}[Nikodym set, Sogge \cite{Sog99}]
Let $\lambda\in (0, 1)$. A set $E\subset \R^n$ is said to be a $\lambda$-Nikodym set if 
\begin{equation}
\mc{L}^n( \{\bfx\in \B^n_{\epsilonm/2}: \text{ There exists } \gamma_{\bfx} \text{ such that } 
\gamma_{\bfx, \lambda-\mathrm{trun}}\subset E
\})>0.
\end{equation}
A set $E$ is said to be Nikodym if it is $\lambda$-Nikodym for every $\lambda<1$. \footnote{The definition of Nikodym sets has a slightly different formulation from that in Sogge \cite{Sog99}, but they are essentially the same. }
\end{definition}

For $\delta>0$, let $\gamma_{\bfx}^{\delta}$ be the $\delta$-neighborhood of $\gamma_{\bfx}$. Similarly, we define $\gamma_{\bfx, \lambda-\mathrm{trun}}^{\delta}$ 

\begin{definition}[Nikodym maximal function, Sogge \cite{Sog99}]\label{230905defi1_11}
Let $\delta\in (0, 1)$ and  $\lambda\in (0, 1)$. For a function $f$ defined on $\mc{M}$ and $\bfx\in \B^n_{\epsilonm/2}$, define 
\begin{equation}
f_{\delta}^*(\bfx):=\sup_{\gamma_{\bfx}} \delta^{-1} \int_{\gamma_{\bfx}^{\delta}} |f|,
\end{equation}
and call it the Nikodym maximal function.  Moreover, define 
\begin{equation}
\mc{N}_{\delta, \lambda}f(\bfx):=\sup_{\gamma_{\bfx}} \delta^{-1} \int_{\gamma_{\bfx, \lambda-\mathrm{trun}}^{\delta}} |f|,
\end{equation}
and call it the $\lambda$-Nikodym maximal function. 
\end{definition}

\begin{theorem}[Sogge \cite{Sog99}, Xi \cite{Xi17}]\label{230906theorem1_12}
 Assume that the manifold $\mc{M}$ with dimension $n\ge 3$ has a constant sectional curvature. Then for sufficiently small $\epsilonm>0$ depending on $\mc{M}$, it holds that 
\begin{equation}
\norm{f^*_{\delta}}_{L^q(\B^n_{\epsilonm/2})}\lesim_{q, p, \mc{M}, \epsilon} 
\delta^{
1-\frac{n}{p}-\epsilon
}
\norm{f}_{L^p(\mc{M})},
\end{equation}
for all $\delta\in (0, 1), \epsilon>0$ and all 
\begin{equation}
1\le p\le \frac{n+2}{2}, q=(n-1)p'.
\end{equation}
Consequently, every Nikodym set in $\mc{M}$ must have Minkowski dimension at least $\frac{n+2}{2}$. 
\end{theorem}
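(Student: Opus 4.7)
The plan is to straighten geodesics via a projective chart (which exists precisely for constant sectional curvature, by Beltrami's theorem) and transfer the problem to the Euclidean Nikodym maximal operator, to which Wolff's $L^{(n+2)/2}$ Kakeya maximal estimate applies. By real interpolation against the trivial $L^1\to L^\infty$ bound (of norm $\delta^{1-n}$), it suffices to establish the endpoint $p=(n+2)/2$, $q=(n-1)(n+2)/n$; the full range $1\le p\le (n+2)/2$ with $q=(n-1)p'$ follows from Marcinkiewicz interpolation, and one checks that the power of $\delta$ interpolates exactly to $\delta^{1-n/p-\epsilon}$.

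The first step is to exhibit a local diffeomorphism $\Psi:\B^n_{\epsilonm}\to\R^n$ sending geodesics of $\mc{M}$ to Euclidean line segments. This is the content of Beltrami's theorem, which provides exactly such charts on manifolds of constant sectional curvature: gnomonic (central) projection for positive curvature, the Klein/Beltrami model for negative curvature, and the identity in the flat case. After shrinking $\epsilonm$ if necessary, $\Psi$ and $\Psi^{-1}$ are smooth with bounded derivatives, hence bi-Lipschitz with constants depending only on $\mc{M}$. Consequently each geodesic tube $\gamma_\bfx^\delta$ is sent to a region comparable (in shape and measure) to a Euclidean $\delta$-tube through $\Psi(\bfx)$, uniformly in $\bfx$ and $\delta$.

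The second step is to dominate $f^*_\delta$ pointwise by the Euclidean Nikodym maximal operator applied to $(f\circ\Psi^{-1})\,|\det d\Psi^{-1}|$. For the latter, the endpoint bound at $p=(n+2)/2$ follows from Wolff's $L^{(n+2)/2}$ Kakeya maximal estimate in $\R^n$ together with the classical Kakeya-to-Nikodym transference (a projective inversion $(x,t)\mapsto(x/t,1/t)$ exchanges the two maximal operators up to harmless Jacobian factors). Pulling back via $\Psi$ and using boundedness of the Jacobians then yields the claimed inequality on $\mc{M}$. The Minkowski dimension statement follows by a standard argument: if $E$ is Nikodym then for any $\lambda<1$ a set of positive measure of $\bfx$ satisfies $\mc{N}_{\delta,\lambda}\chi_{E^\delta}(\bfx)\geq c_\lambda$; applying the maximal estimate with $f=\chi_{E^\delta}$ and $p=(n+2)/2$ gives $\mc{L}^n(E^\delta)\geq c_\epsilon\delta^{(n-2)/2+\epsilon}$ for every $\epsilon>0$, whence $\overline{\dim}_M E\geq(n+2)/2$.

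The main technical obstacle is the uniform control of the straightening: one must verify that the bi-Lipschitz constants of $\Psi$ and the distortion of $\mc{H}^{n-1}$-measure along geodesics are controlled independently of both the base point and the scale $\delta$, so that $\Psi(\gamma_\bfx^\delta)$ is sandwiched between two Euclidean tubes of comparable width, and conversely every Euclidean $\delta$-tube through $\Psi(\bfx)$ arises as the $\Psi$-image of a $C\delta$-neighborhood of some geodesic through $\bfx$. Once these uniform estimates are in hand, both the transfer of the Nikodym/Kakeya maximal bound from $\R^n$ to $\mc{M}$ and the passage to the Minkowski dimension lower bound are routine.
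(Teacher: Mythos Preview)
This theorem is quoted in the paper as prior work of Sogge and Xi; the paper does not supply its own proof. Your outline is correct and is essentially the original Sogge--Xi argument: constant sectional curvature is exactly the hypothesis under which Beltrami's theorem furnishes a projective (geodesic-straightening) chart, and once geodesic $\delta$-tubes are bi-Lipschitz equivalent to straight Euclidean $\delta$-tubes, Wolff's $L^{(n+2)/2}$ Kakeya bound transfers via the standard Kakeya--Nikodym equivalence. The technical points you flag---uniform bi-Lipschitz constants and uniform tube comparability---are genuine but routine on a fixed small ball, since $\Psi$ is a fixed smooth diffeomorphism.

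For context, the paper's own improvement on this result, Theorem~\ref{230323theorem3_4}(c), does not go through straightening. Instead the paper verifies (Lemma~\ref{230323lemma5_1}) that the distance function on a constant-curvature manifold satisfies Bourgain's condition everywhere, which grants the polynomial Wolff axioms of \cite{GWZ22} for the curved tubes themselves, and then runs the Hickman--Rogers--Zhang argument \cite{HRZ22} directly in the curved setting. In fact your Beltrami reduction could equally well recover the HRZ range by citing \cite{HRZ22} on the Euclidean side in place of Wolff; what the paper's intrinsic route buys is a coordinate-free characterization of constant curvature via Bourgain's condition on the phase, together with the converse in part~(b) that nonconstant curvature necessarily violates it.
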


In Sogge \cite{Sog99}, the author, after investigating bounds for Nikodym maximal operators for three dimensional manifolds of constant curvatures, also considered manifolds whose sectional curvatures are not constant. To state Sogge's result, let us first recall several concepts from Riemannian geometry. 

Let $\mc{M}$ be a three-dimensional manifold as in Subsection \ref{230814sub1_3} with Riemannian metric $g=\{g_{ij}\}_{1\le i, j\le 3}$. Denote by $\ricci$ the Ricci tensor on $\mc{M}$, which is a $(0, 2)$-tensor. Denote  
\begin{equation}
\bar{\ricci}: T\mc{M}\to T\mc{M}
\end{equation}
which satisfies 
\begin{equation}
g(
\bar{\ricci}(X_1), X_2
):= \ricci(X_1, X_2), \ \ \forall X_1, X_2\in T\mc{M}.
\end{equation}

\begin{definition}[Chaotic curvature, Sogge \cite{Sog99}]\label{230821defi1_11}\footnote{The definition of chaotic curvature here is formulated slightly differently from Sogge's, see \cite[Definition 3.1]{Sog99}.}
Let $\gamma$ be a geodesic parametrized by arclength with $\gamma(0)=0\in \mc{M}$. Take a unit vector $X(0)\in T_{\gamma(0)}\mc{M}$ with $\dot{\gamma}(0)\perp X(0)$. Let $X(t)\in T_{\gamma(t)}\mc{M}$ be the parallel transport of $X(0)$ along $\gamma$. Denote 
\begin{equation}
Y(t):=\bar{\ricci}(X(t)),
\end{equation}
and let $Y^{\perp}(t)$ be the projection of $Y(t)$ to the orthonormal compliment of the space spanned by $X(t)$ and $\dot{\gamma}(t)$. We say that the manifold $\mc{M}$ satisfies the chaotic curvature at the origin, if 
\begin{equation}
|Y^{\perp}(0)|+ 
|\nabla_{\dot{\gamma}}
Y^{\perp}(0)
|\neq 0,
\end{equation}
for all geodesics $\gamma$ passing through the origin, and all $X(t)$ given above. 
\end{definition}

\begin{theorem}[Sogge \cite{Sog99}]
Let $\mc{M}$ be a three-dimensional manifold as in Subsection \ref{230814sub1_3}. Assume that $\mc{M}$ satisfies the chaotic curvature condition at the origin, then every Nikodym set on $\mc{M}$ must have Minkowski dimension $\ge 7/3$.
\end{theorem}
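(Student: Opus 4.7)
The plan is to deduce the Minkowski dimension bound via the standard chain of reductions: Nikodym set dimension $\Leftarrow$ $L^p$-bound for the Nikodym maximal function $\mc{N}_{\delta,\lambda}$ $\Leftarrow$ $L^p$ estimate for the H\"ormander-type oscillatory integral $T_N^{(\phi)}$ with phase $\phi(\bfx; y) := \dist(\bfx, \bfy(y))$, where $\bfy = \bfy(y)$ locally parametrizes a hypersurface $\mc{M}' \subset \mc{M}$ chosen transverse to the geodesics of interest. Chaotic curvature will enter at the last link as the geometric condition making $\phi$ satisfy H\"ormander's non-degeneracy conditions (H1) and (H2), so that Theorem \ref{230814theorem1_3} can be applied.

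First, a standard $\chi_{E_\delta}$-testing argument reduces $\dim_M E \ge 7/3$ to the maximal function estimate
\begin{equation*}
\norm{\mc{N}_{\delta,\lambda} f}_{L^{q_0}(\B^3_{\epsilonm/2})} \lesim_{\epsilon} \delta^{1-3/p_0-\epsilon}\norm{f}_{L^{p_0}(\mc{M})}, \qquad q_0 = 2p_0',
\end{equation*}
for exponents $p_0$ arbitrarily close to $7/3$ (mirroring Theorem \ref{230906theorem1_12} in the constant-curvature case). Parametrizing geodesics by their intersection $\bfy(y)$ with $\mc{M}'$, the tubes of Definition \ref{230905defi1_11} become exactly the curved tubes $T_y^{\delta,(\phi)}$, and the Nikodym analogue of Wisewell's duality (Theorem \ref{230706thm1_8}) converts the maximal bound into an estimate of the form \eqref{230324e1_5} for $T_N^{(\phi)}$ at an exponent $p \le 7/2$.

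The main step is to verify (H1) and (H2) for $\phi$ under chaotic curvature. Condition (H1) is routine: by the Gauss lemma $\nabla_\bfx \dist(\bfx, \bfy)$ is the unit tangent at $\bfx$ to the geodesic from $\bfy$ to $\bfx$, and transversality of $\mc{M}'$ makes $y \mapsto \nabla_\bfx \phi(\bfx; y)$ an immersion of $\R^2$; a Jacobi-field computation near the diagonal further shows that the matrix $A$ in the normal form \eqref{230324e1_4} is positive definite, so Theorem \ref{230814theorem1_3} with $n=3$ will furnish \eqref{230324e1_5} for all $p \ge 10/3$, comfortably inside the range needed to close the chain. For (H2) one expands $\dist(\bfx, \bfy)$ to fourth order in Fermi coordinates along the geodesic through $\bfx_0$ and $\bfy(y_0)$ and checks that
\begin{equation*}
\det \nabla_y^2 \inn{\nabla_\bfx \phi(\bfx_0; y)}{G_0(\bfx_0; y_0)}\bigr|_{y=y_0}
\end{equation*}
is proportional, up to a non-vanishing Jacobi-field Gramian factor, to the chaotic-curvature invariant $|Y^{\perp}(0)| + |\nabla_{\dot\gamma} Y^{\perp}(0)|$ of Definition \ref{230821defi1_11}.

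The principal obstacle is exactly this last identification: a careful fourth-order Taylor expansion of $\dist(\cdot,\cdot)$ in Fermi coordinates, together with bookkeeping of how the Ricci-tensor data $Y^{\perp}, \nabla_{\dot\gamma} Y^{\perp}$ enter the mixed derivatives $\partial_y^2 \nabla_\bfx \phi$ contracted against $G_0$. Unlike the constant-curvature setting, one cannot appeal to Bourgain's condition \eqref{230719e1_9}: for a generic chaotic manifold the Hessian-of-Hessians relation in Definition \ref{230903definition1_4} will fail, so Theorem \ref{231013theorem1_5} is unavailable and one must work with raw (H2). The chaotic hypothesis is precisely what prevents the determinant in (H2) from degenerating along some geodesic through the origin, thereby ensuring the required H\"ormander non-degeneracy uniformly in the parameter $y$.
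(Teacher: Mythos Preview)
This statement is quoted from Sogge \cite{Sog99} as background and is not proved in the present paper, so there is no proof here to compare against. Assessing your proposal on its own terms, there are two genuine gaps.

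\textbf{Chaotic curvature is not what makes (H2) hold.} Conditions (H1) and (H2) hold for the distance-function phase on \emph{every} Riemannian manifold; this is exactly Theorem~\ref{230617thm2_1}(a), and the paper's proof in Subsection~5.2 is just continuity from the Euclidean case (equivalently, positive-definiteness of $A$ in the normal form \eqref{230324e1_4}). Your proposed identification of the (H2) determinant with the chaotic invariant $|Y^\perp(0)| + |\nabla_{\dot\gamma} Y^\perp(0)|$ cannot be right: the former is second-order data in $y$, the latter encodes third/fourth-order information along the geodesic. In the language of this paper, chaotic curvature corresponds to finite \emph{contact order} (Theorem~\ref{230711thm2_5}), a condition several derivatives above (H2).

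\textbf{The numerology does not close.} Since (H1)--(H2) hold for any distance function, Theorem~\ref{230814theorem1_3} already gives \eqref{230324e1_5} for $p \ge 10/3$ on every $3$-manifold, with no chaotic hypothesis. Feeding this into Theorem~\ref{230617thm2_1}(b) yields the $\lambda$-Nikodym maximal function of restricted weak type $(q,q)$ with $q = (p/2)' = 5/2$ and norm $\delta^{-2(3/q-1)}$, and the standard testing argument then gives only
\[
\dim_M E \;\ge\; 2q - 3 \;=\; 2,
\]
not $7/3$. Your target ``$p \le 7/2$'' appears to conflate the oscillatory exponent $p$ with the maximal-function exponent $q_0 = 2p_0'$; to reach $7/3$ through this (lossy) route one would actually need the oscillatory bound at $p = 16/5$, which neither Theorem~\ref{230814theorem1_3} nor even Theorem~\ref{230816theorem2_4} with $k=4$ supplies. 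Sogge's actual argument in \cite{Sog99} is a direct geometric maximal-function estimate in which chaotic curvature prevents curved tubes from compressing into a $2$-plane---the phenomenon this paper later quantifies as contact order---rather than a deduction from oscillatory integral bounds.
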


\bigskip

\noindent {\bf Notations. } We list notations that are used in the introduction and in the rest of the paper. 
\begin{enumerate}
\item For $\epsilon>0$ and $\bfx\in \R^n$, we let $\B^n_{\epsilon}(\bfx)$ denote the ball of radius $\epsilon$ in $\R^n$ centered at $\bfx$. If $\epsilon=1$, we often abbreviate $\B^n_1(\bfx)$ to $\B^n(\bfx)$; if $\bfx=0$, then we often abbreviate $\B^n_{\epsilon}(\bfx)$ to $\B^n_{\epsilon}$.  
\item For $\bfx, \bfy\in \R^n$, their last components often play a distinct role compared with the first $n-1$ one, and therefore we often write $\bfx=(x, t), \bfy=(y, \tau)$, with $x, y\in \R^{n-1}$. 
\item For a vector $\bfv\in \R^n$, we use $|\bfv|$ to denote its standard Euclidean length. For a manifold $\mc{M}$ with metric tensor $g$, we use $|\bfv|$ to denote its length $\sqrt{g_p(\bfv, \bfv)}$ for $p\in \mc{M}, \bfv\in T_p\mc{M}$. 
\item We will use $\widehat{f}$ or $(f)^{\wedge}$ to denote the Fourier transform of $f$. 
\item For a set $E$, we will use $\mathbbm{1}_E$ to denote its indicator function. 
\item For a set $E\subset \R^n$ and $\delta>0$, we use $\mc{N}_{\delta}(E)$ to denote the $\delta$-neighborhood of $E$. 
\item For two non-negative real numbers $a, b$ and a parameter $p$, we use $a\lesim_p b$ to mean that there exists a constant $C_p$ depending only on $p$ such that $a\le C_p b$. For instance, let $T: L^p(\R^n)\to L^p(\R^n)$ be an operator. We use 
\begin{equation}
\norm{T f}_{L^p(\R^n)}\lesim_p \norm{f}_{L^p(\R^n)}
\end{equation}
to mean that there exits $C_p\in \R$ depending only on the Lebesgue exponent $p$ such that  
\begin{equation}
\norm{T f}_{L^p(\R^n)}\le C_p \norm{f}_{L^p(\R^n)}
\end{equation}
for all functions $f$. If it is clear from the context which parameters $p$ are involved, then we often abbreviate $a\lesim_p b$ to $a\lesim b$. Similarly, we define $a\gtrsim_p b$. Moreover, we use $a\simeq_p b$ to mean $a\lesim_p b$ and $a\gtrsim_p b$. 
\item For a set $E\subset \R^n$, we use $\mc{L}^n(E)$ to refer to its Lebesgue measure. 
\item For $p\in [1, \infty]$, we use $p'$ to denote its H\"older dual, that is, $1/p+1/p'=1$. 
\item All manifolds in the current paper are assumed to be smooth.
\item For a rectangle $\Box\subset \R^n$ and $r>0$, we use $r\Box$ to denote the rectangle with the same center as $R$, but dilated by $r$ with respect to the center of $R$. 
\item We try to make sure that the same notations are not repeatedly used within a same section, unless otherwise specified. However, if a same notation appears in different sections, it may refer to different things. 
\end{enumerate}

\bigskip 

\noindent {\bf Acknowledgements.} S. G. would like to thank Sigurd Angenent, Yanyan Li and Ruobing Zhang for discussions on relevant Riemannian geometry materials in the paper. Part of the work was done during Guo's multiple visits to Rutgers University; he would like to thank Yanyan Li for the invitations and the hospitality. S. D. is partly supported by NSF of China (No. 11971244 and No. 12071338); S. G. is partly supported by NSF-2044828; R. Z. is supported by NSF DMS-2207281(transferred from DMS-1856541), NSF DMS-2143989 and the Sloan Research Fellowship.

\section{Statement of main results}\label{231024section2}

The first result is about the connections among the operators introduced above, and is well known. Roughly speaking, it says that 
\begin{equation}
\begin{split}
& \text{H\"ormander-type oscillatory integrals} \\\implies & \text{reduced Carleson-Sj\"olin on manifolds}\\
\implies &\lambda\text{-Nikodym maximal functions. }
\end{split}
\end{equation}
Let us be more precise. Let $\mc{M}$ be a Riemannian manifold as in Subsection \ref{230814sub1_3} of dimension $n\ge 3$, and let $\epsilon_{\mc{M}}>0$ be a small constant that is allowed to depend on $\mc{M}$. Recall reduced Carleson-Sj\"olin operators defined in \eqref{230407e1_9z}.
\begin{theorem}\label{230617thm2_1}
\begin{enumerate}
\item[a)] Reduced Carleson-Sj\"olin operators satisfy H\"ormander's non-degeneracy conditions. 
\item[b)] Assume that 
\begin{equation}
\norm{R^{(\mc{M}, \mc{M}')}_N f}_{L^p(\mc{M})} \lesim_{\mc{M}, \mc{M}', p, a} N^{-\frac{n}{p}} 
\norm{f}_{L^p(\mc{M}')},
\end{equation}
for some $p>1$, every $N\ge 1$, every hyperplane $\mc{M}'$ intersecting $\B^n_{\epsilon_{\mc{M}}}$ and every smooth amplitude function satisfying the separation condition \eqref{230815e1_24}. Then the $\lambda$-Nikodym maximal function $\mc{N}_{\delta, \lambda}$ is of restricted weak type $(q, q)$ with norm 
\begin{equation}
\lesim_{\lambda, \mc{M}, p} \delta^{
-2(\frac{n}{q}-1)
}, 
\end{equation}
where $q:=(p/2)'$, for every $\lambda<1$. 
\end{enumerate}
\end{theorem}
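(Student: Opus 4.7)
My plan is to handle (a) by a direct differential-geometric computation and (b) by combining (a) with the known oscillatory-integral-to-Kakeya-maximal implication of Wisewell (Theorem \ref{230706thm1_8}), followed by identifying the two relevant families of tubes.

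For part (a), the phase of $R_N^{(\mc{M}, \mc{M}')}$ is $\phi(\bfx;\bfy) := \dist(\bfx,\bfy)$ with $\bfx \in \mc{M}$ and $\bfy$ ranging over the $(n-1)$-dimensional hypersurface $\mc{M}' \cap \B^n_{\epsilonm}$. I fix a smooth chart $\bfy = \bfy(s)$ on $\mc{M}'$ with $s \in \R^{n-1}$ and work in geodesic normal coordinates on $\mc{M}$ centered at a reference point. By the Gauss lemma, $\nabla_\bfx \phi(\bfx;\bfy) = -\dot\gamma(0)^\flat$, where $\gamma$ is the unit-speed minimizing geodesic from $\bfx$ to $\bfy$ (well-defined by \eqref{230815e1_24} and smallness of $\epsilonm$); in particular $|\nabla_\bfx\phi|_g = 1$. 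Condition (H1) then reduces to showing that $s \mapsto \nabla_\bfx \phi(\bfx;\bfy(s))$ is an immersion of $\R^{n-1}$ into the unit sphere of $T_\bfx\mc{M}$, which follows from the non-degeneracy of the exponential map at $\bfx$ together with the transversality of $\mc{M}'$ to geodesics through $\supp(a)$. Condition (H2) is the Carleson--Sj\"olin cone condition, asking that this hypersurface of directions have non-vanishing Gaussian curvature. In the Euclidean model $\dist(\bfx,\bfy) = |\bfx-\bfy|$ the image lies on a round sphere, which has non-vanishing Gaussian curvature; for a general smooth metric I perturb this computation using the Taylor expansion of $\dist(\bfx,\bfy)$ in normal coordinates and observe that the $C^\infty$-dependence on $g$, together with smallness of $\epsilonm$, preserves the non-vanishing of the determinant appearing in (H2).

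For part (b), part (a) verifies that $\phi = \dist$ satisfies (H1) and (H2), so the hypothesis of part (b) is exactly the hypothesis of Wisewell's Theorem \ref{230706thm1_8}(1) applied to this phase. That theorem therefore yields that the curved Kakeya maximal function $\mc{K}_{\delta}^{(\phi)}$ associated to $\phi = \dist$ is of restricted weak type $(q,q)$ with norm $\lesim \delta^{-2(n/q - 1)}$. It remains to compare $\mc{K}_{\delta}^{(\phi)}$ with $\mc{N}_{\delta,\lambda}$. By the first variation of arclength, the central curve $\Gamma_y^{(\phi)}(\omega) = \{\bfx' : \nabla_y \dist(\bfx';y) = \nabla_y \dist(\omega;y)\}$ is exactly a portion of the geodesic passing through $y \in \mc{M}'$ with prescribed tangent direction at $y$; its $\delta$-tube $T_y^{\delta,(\phi)}(\omega)$ is a $\delta$-neighborhood of that geodesic segment. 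Conversely, for each $\bfx \in \B^n_{\epsilonm/2}$ and each geodesic $\gamma_\bfx$, the $\lambda$-truncation $\gamma_{\bfx,\lambda\text{-trun}}$ can be realized (for an appropriate reference hyperplane $\mc{M}'$ chosen using the $\lambda$-separation) as a portion of $\Gamma_y^{(\phi)}(\omega)$ for a uniquely determined $(y,\omega)$. The reparametrization $(\bfx,\dot\gamma_\bfx) \leftrightarrow (y,\omega)$ is bi-Lipschitz with constants depending only on $\lambda$ and $\mc{M}$, so the restricted weak-type bound for $\mc{K}_{\delta}^{(\phi)}$ transfers to the claimed bound for $\mc{N}_{\delta,\lambda}$ via this change of variables.

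The main technical obstacle is the careful verification of (H2): the relevant determinant involves mixed third-order derivatives of $\dist(\bfx,\bfy)$, and executing the perturbative argument cleanly requires expanding the distance function in normal coordinates to sufficient order and tracking how the Riemann curvature tensor enters the higher-order corrections. The identification of tubes in part (b) is conceptually straightforward but requires the $\lambda$-truncation to stay uniformly away from $\mc{M}'$ so that the reparametrization remains non-degenerate.
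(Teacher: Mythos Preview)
Your proposal is correct and matches the paper's approach essentially line for line: for (a) the paper also argues by perturbation from the Euclidean distance function (where (H1)--(H2) are immediate) using continuity, and for (b) the paper likewise invokes Wisewell's Theorem~\ref{230706thm1_8} after identifying the curved Kakeya tubes $T_y^{\delta,(\phi_\epsilon)}$ with $\delta$-neighborhoods of geodesics through $(y,\epsilon)\in\mc{M}'$ via the constancy of $\nabla_y\dist(\,\cdot\,;y)$ along such geodesics (their Lemma~\ref{230411lemmaA_1}, which is exactly your first-variation observation). The only cosmetic difference is that the paper states the geodesic identification as a standalone lemma proved via the Gauss lemma in normal coordinates, whereas you phrase it as first variation of arclength; the content is identical.
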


The formulation of item b) in Theorem \ref{230617thm2_1} is taken from Wisewell's Theorem \ref{230706thm1_8} in Subsection \ref{230903subsection1_2}. Indeed, the proof of item b) is also essentially the same as that of Theorem \ref{230706thm1_8}. 

We should also mention that curved Kakeya maximal operators are also closely related to $\lambda$-Nikodym maximal operators. Indeed in many interesting cases, they are essentially the same objects. Later we will use this relation, for instance in the proof of Theorem \ref{230323theorem3_4}. However, due to purely technical reasons, we do not have a clean way to state such relations. 

In the appendix, we will explain a key difference between curved Kakeya maximal operators and $\lambda$-Nikodym maximal operators.\\

Before stating the next result, let us recall the result in \cite{GWZ22}, as stated in Theorem \ref{231013theorem1_5}. In \cite{GWZ22}, the authors considered H\"ormander-type oscillatory integrals, and showed that if the phase function $\phi(\bfx; y)$ satisfies Bourgain's condition everywhere, then all the current techniques that have been developed so far in the study of the Fourier restriction conjecture can also be applied to H\"ormander-type oscillatory integrals. 

Indeed, the same principle applies also to the study of curved Kakeya problems. Let $n\ge 3$. For phase functions $\phi$ satisfying the same assumptions as in Theorem \ref{231013theorem1_5}, the associated curved Kakeya sets satisfy the same dimension bounds as what Hickman, Rogers and Zhang \cite{HRZ22} obtained for the traditional straight line Kakeya sets. Let us be more precise. Denote 
\begin{equation}
q_{\mathrm{HRZ}}(n):=1+\min _{2 \leqslant k \leqslant n} \max \left\{\frac{2 n}{(n-1) n+(k-1) k}, \frac{1}{n-k+1}\right\},
\end{equation}
which is the exponent that appeared in \cite[Theorem 1.2]{HRZ22}. Moreover, denote 
\begin{equation}
p_{\mathrm{HRZ}}(n):= (q_{\mathrm{HRZ}}(n))'.
\end{equation}
Then one can follow the same argument as in \cite{HRZ22}, use the (strong) polynomial Wolff axioms for $\phi$ obtained in \cite[Theorem 1.2, Theorem 6.2]{GWZ22}, a standard equivalence argument (see for instance \cite[Proposition 22.6]{Mat15}) and obtain 
\begin{equation}\label{230815e2_4}
\norm{
\mc{K}_{\delta}^{(\phi)} f
}_{
L^p(\B^{n-1}_{\epsilon_{\phi}})
}
\lesim_{\phi, \epsilon, p}
\delta^{-\epsilon}
\delta^{
-(n-1-\frac{n}{p'})
}
\norm{f}_{L^p(\R^n)},
\end{equation}
for every 
\begin{equation}
1<p\le p_{\mathrm{HRZ}}(n),
\end{equation}
 every $\epsilon>0, \delta\in (0, 1)$. In particular, for every $p$ in the above range, the exponent of $\delta$ on the right hand side is sharp. Moreover, it seems reasonable to conjecture that \eqref{230815e2_4} holds for all $1<p\le n$, that is, the same range of $p$ as in the maximal Kakeya conjecture. \\

 As a corollary of \eqref{230815e2_4} (see \cite[Theorem 22.9]{Mat15}), we obtain that every curved Kakeya set associated to $\phi$ must have Hausdorff dimension at least 
\begin{equation}\label{230815e2_5}
n- \pnorm{n-1-\frac{n}{q_{\mathrm{HRZ}}(n)}} p_{\mathrm{HRZ}}(n):= d_{\mathrm{HRZ}}(n).
\end{equation}
If we let $n\to \infty$, then (see \cite[Subsection 9.2]{HRZ22})
\begin{equation}\label{230905e2_9}
\eqref{230815e2_5}= (2-\sqrt{2})n+O(1).
\end{equation}
It is worth mentioning that the asymptotic \eqref{230905e2_9} obtained by Hickman, Rogers and Zhang \cite{HRZ22} for the dimensions of the traditional Kakeya sets (for straight lines) is the same as that of Katz and Tao \cite{KT02}. Moreover, these two results \cite{HRZ22} and \cite{KT02} together give the currently best known results for the dimensions of the traditional Kakeya sets in high dimensions. \\

\bigskip

Let us state our next result, which involves bounds for reduced Carleson-Sj\"olin operators defined in \eqref{230407e1_9z}, and for $\lambda$-Nikodym maximal operators in Definition \ref{230905defi1_11}. Let $\mc{M}$ be a Riemannian manifold as in Subsection \ref{230814sub1_3} of dimension $n\ge 3$. let $\epsilon_{\mc{M}}>0$ be a small constant that is allowed to depend on $\mc{M}$.

\begin{theorem}\label{230323theorem3_4}
\begin{enumerate}
\item[(a)]  Assume that $\mc{M}$ is of a constant sectional curvature. Recall the definition of $p_{\mathrm{GWZ}}(n)$ in \eqref{230409e1_10}. Then 
\begin{equation}\label{230322e3_3}
\Norm{
R_N^{(\mc{M}, \mc{M}')} f
}_{L^p(\mc{M})} \lesim_{\mc{M}, \mc{M}', a, p, \epsilon} N^{-\frac{n}{p}+\epsilon} \norm{f}_{L^p(\mc{M}')}, 
\end{equation}
for all $p\ge p_{\mathrm{GWZ}}(n), \epsilon>0, N\ge 1$, all hyperplanes $\mc{M}'$ intersecting $\B^n_{\epsilonm}$, and all functions $a(\bfx)$ satisfying the separation condition \eqref{230815e1_24}. 
\item[(b)] Assume $\mc{M}$ is analytic and its sectional curvature is not constant. Then we can find $p> \frac{2n}{n-1}$, a small positive $\epsilonm>0$, a hyperplane $\mc{M}'$ intersecting $\B^n_{\epsilonm}$, a smooth function $a(\bfx)$ satisfying the separation condition \eqref{230815e1_24}, such that \eqref{230322e3_3} may fail.
\item[(c)] Assume that $\mc{M}$ is of  a constant sectional curvature. Then 
\begin{equation}
\norm{
\mc{N}_{\delta, \lambda}f
}_{
L^p(
\B^n_{
\epsilon_{\mc{M}}/2
}
)
}
\lesim_{
p, \epsilon, \lambda, \mc{M}
}
\delta^{-\epsilon}
\delta^{
-(n-1-\frac{n}{p'})
}
\norm{f}_{
L^p(\mc{M})
},
\end{equation}
for every $\lambda<1$, $\epsilon>0, \delta\in (0, 1)$ and every 
\begin{equation}
1< p\le p_{\mathrm{HRZ}}(n).
\end{equation}
Consequently, every Nikodym set must have Minkowski dimension $\ge d_{\mathrm{HRZ}}(n)$.

\end{enumerate}

\end{theorem}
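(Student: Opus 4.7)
The three parts of the theorem share a common skeleton: each reduces to a statement about H\"ormander-type oscillatory integrals via Theorem \ref{230617thm2_1}(a), and the key geometric input is the equivalence (announced in the abstract) between constant sectional curvature and Bourgain's condition for the distance function.

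For part (a), my plan is to first use Theorem \ref{230617thm2_1}(a) to view $R_N^{(\mc{M},\mc{M}')}$ as a H\"ormander-type oscillatory integral with phase $\phi(\bfx;y) := \dist(\bfx, \bfy(y))$, where $y \mapsto \bfy(y)$ parametrizes $\mc{M}'\cap\B^n_{\epsilonm}$. After a normal-form change of variables to \eqref{230324e1_4} at a reference point, the matrix $A$ will be positive definite by the standard Carleson-Sj\"olin computation, reflecting the strict convexity of small geodesic spheres. The crucial new step is to verify Bourgain's condition at every $(\bfx_0;y_0)$. I would evaluate the tensors $(G_0\cdot\nabla_{\bfx})\nabla_y^2\phi$ and $(G_0\cdot\nabla_{\bfx})^2\nabla_y^2\phi$ using Jacobi-field calculus (equivalently a Synge-type Taylor expansion of $\dist$), express the proportionality obstruction in \eqref{230719e1_9} as a tensor built from the Riemann curvature tensor and its first covariant derivative, and show that this obstruction vanishes identically exactly when the sectional curvature is constant. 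With Bourgain's condition established everywhere, part (a) follows immediately from Theorem \ref{231013theorem1_5}.

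For part (b), I would take the contrapositive of the computation in (a). Since the obstruction tensor is real-analytic in $(\bfx;y)$ and $\mc{M}$ is analytic with non-constant sectional curvature, the obstruction cannot vanish identically and hence is non-zero on a dense open set. Choosing $\mc{M}'$, the amplitude $a$, and $\epsilonm$ so that this open set meets the support of $a$, the corresponding phase fails Bourgain's condition at some $(\bfx_0;y_0)$; Bourgain's counterexample from \cite{Bou91} (the result quoted right after Definition \ref{230903definition1_4}) then produces some $p>2n/(n-1)$ depending only on $n$ for which \eqref{230322e3_3} fails.

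For part (c), having established Bourgain's condition and positive definiteness of $A$ in the constant-curvature case, I invoke the strong polynomial Wolff axioms of \cite[Theorem 1.2, Theorem 6.2]{GWZ22} for the curved Kakeya problem associated with $\phi$, and run the Hickman-Rogers-Zhang argument of \cite{HRZ22} in the curved setting to obtain the curved Kakeya maximal bound \eqref{230815e2_4} at exponent $p_{\mathrm{HRZ}}(n)$, just as indicated in the paragraph preceding the theorem. Since for $\phi=\dist(\cdot,\bfy(\cdot))$ the tubes $T_y^{\delta,(\phi)}(\omega)$ are precisely $\delta$-neighborhoods of geodesic segments based on $\mc{M}'$, a finite covering of the unit tangent bundle by choices of hyperplane $\mc{M}'$, combined with the $\lambda$-truncation of the geodesic, transfers this bound on $\mc{K}_{\delta}^{(\phi)}$ to the claimed $L^p$-bound on $\mc{N}_{\delta,\lambda}$. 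The Minkowski dimension statement for Nikodym sets then follows by the standard duality (see \cite[Theorem 22.9]{Mat15}). The main obstacle throughout is the curvature characterization of Bourgain's condition underlying (a) and (b): one must extract, from the fourth-order-jet quantity in \eqref{230719e1_9}, a curvature invariant of $\mc{M}$ and recognize its vanishing locus as the constant-curvature condition. A secondary concern in (c) is that the HRZ argument is originally written for straight tubes, so each ingredient (two-ends reductions, polynomial partitioning, broom estimates) must be checked to carry over once the polynomial Wolff axioms for $\phi$ replace their straight-line counterparts, and the passage from curved Kakeya maximal to Nikodym maximal needs the $\lambda$-truncation to keep us away from focal points of the exponential map.
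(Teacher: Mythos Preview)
Your proposal is correct and follows essentially the same approach as the paper: reduce to the H\"ormander setting via Theorem~\ref{230617thm2_1}(a), verify Bourgain's condition for the distance-function phase through Jacobi-field calculus (the paper reformulates \eqref{230719e1_9} as a proportionality between $\partial_s\partial_{s'}A(0,\epsilon)$ and $\partial_s^2\partial_{s'}A(0,\epsilon)$ for the matrix solution of $A''+AR=0$, solves it explicitly for constant curvature in (a), and for (b) differentiates the scalar relation $B_1=C(\epsilon)B_2$ to force $R(\epsilon)=\kappa(\epsilon)I$ and then invokes Schur's lemma), and then applies \cite{GWZ22}/\cite{Bou91}/\cite{HRZ22} as black boxes exactly as you outline. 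Your anticipated ``obstacles'' for (c)---transferring the straight-line HRZ argument via the polynomial Wolff axioms of \cite{GWZ22}, and using the $\lambda$-truncation to stay away from the diagonal---are precisely the points the paper flags in Subsections~5.3 and~5.6.
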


Recall Carleson-Sj\"olin operators defined in \eqref{230509e1_1}:
\begin{equation}\label{230509e1_1bb}
T_N^{(\mc{M})} f(\bfx):=\int_{\mc{M}} 
e^{i N\dist(\bfx, \bfy)} a(\bfx; \bfy) f(\bfy) d\bfy,
\end{equation}
where $a(\bfx; \bfy): \mc{M}\times \mc{M}\to \R$ is a compactly supported smooth function supported on $\mathbb{B}^n_{\epsilonm}\times \mathbb{B}^n_{\epsilonm}$, and supported away from the diagonal. By item a) of Theorem \ref{230323theorem3_4} and Fubini's theorem (Carleson-Sj\"olin reduction as explained in Subsection \ref{230814sub1_3}), we  obtain
\begin{corollary}
Let $\mc{M}$ be a Riemannian manifold of constant sectional curvature. Let $n\ge 3$ be the dimension of $\mc{M}$. Then 
\begin{equation}\label{230906e2_14}
\norm{T^{(\mc{M})}_N f}_{L^p(\mc{M})} \lesim_{\mc{M}, p, a, \epsilon} N^{-\frac{n}{p}+\epsilon} \norm{f}_{L^p(\mc{M})},
\end{equation}
for all 
\begin{equation}\label{230906e2_15}
p\ge p_{\mathrm{GWZ}}(n),
\end{equation}
all $\epsilon>0$ and $N\ge 1$. 
\end{corollary}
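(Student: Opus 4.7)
The plan is to deduce \eqref{230906e2_14} from part (a) of Theorem \ref{230323theorem3_4} by a standard Carleson--Sj\"olin slicing argument. Since the amplitude $a(\bfx;\bfy)$ is smooth, compactly supported, and supported away from the diagonal, I would first cover $\supp(a)$ by finitely many product sets $U_\alpha\times V_\alpha$ with $U_\alpha,V_\alpha\subset \B^n_{\epsilon_\mc{M}}$ diameter $\ll 1$ and with $\dist(U_\alpha,V_\alpha)\ge \eta>0$ uniformly in $\alpha$, and pick a subordinate partition of unity $\{\chi_\alpha(\bfx;\bfy)\}$. Writing $T_N^{(\mc{M})}=\sum_\alpha T_N^{(\mc{M}),\alpha}$ and estimating each piece separately, I may assume from now on that $\supp_\bfx a\subset U$ and $\supp_\bfy a\subset V$ with $\dist(U,V)\ge \eta$.

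Next I would foliate $V$ by parallel affine hyperplanes $\mc{M}'_\tau:=\{\bfy:\bfy\cdot e=\tau\}$, where the unit vector $e$ is chosen (e.g.\ along the direction from the center of $U$ to the center of $V$) so that every slice $\mc{M}'_\tau$ meeting $V$ satisfies $\dist(U,\mc{M}'_\tau)\ge \eta/2>0$; in particular each slice yields an admissible choice in \eqref{230815e1_24}. Using the coarea/Fubini decomposition I would write
\begin{equation}
T_N^{(\mc{M})}f(\bfx)=\int_{\tau\in I} \Bigl(\int_{\mc{M}'_\tau} e^{iN\dist(\bfx,\bfy)}a(\bfx;\bfy)f(\bfy)\,d\mc{H}^{n-1}(\bfy)\Bigr)\,d\tau,
\end{equation}
for a bounded interval $I$. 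For each $\tau$, the inner integral is (up to the harmless factor $a(\bfx;\bfy)$ which I can absorb into a $\bfy$-dependent amplitude using a further Fourier-series expansion, or simply treat as a smooth weight in $\tau$) a reduced Carleson--Sj\"olin operator $R_N^{(\mc{M},\mc{M}'_\tau)}(f\mathbf{1}_{\mc{M}'_\tau})$ of the type considered in Theorem \ref{230323theorem3_4}(a).

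Applying Minkowski's inequality in the $\bfx$ variable followed by part (a) of Theorem \ref{230323theorem3_4} (whose implicit constant is uniform as $\tau$ varies in the compact set $I$, since $\mc{M}'_\tau$ depends smoothly on $\tau$ with $\dist(U,\mc{M}'_\tau)\ge\eta/2$), I obtain
\begin{equation}
\norm{T_N^{(\mc{M})}f}_{L^p(\mc{M})}\le \int_I \norm{R_N^{(\mc{M},\mc{M}'_\tau)}(f|_{\mc{M}'_\tau})}_{L^p(\mc{M})}\,d\tau\lesim_\epsilon N^{-\frac{n}{p}+\epsilon}\int_I \norm{f|_{\mc{M}'_\tau}}_{L^p(\mc{M}'_\tau)}\,d\tau,
\end{equation}
valid in the range \eqref{230906e2_15}. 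Since $I$ is a bounded interval, H\"older's inequality in $\tau$ combined with Fubini's theorem yields $\int_I \norm{f|_{\mc{M}'_\tau}}_{L^p(\mc{M}'_\tau)}\,d\tau\lesim \norm{f}_{L^p(\mc{M})}$, completing the proof after summing over the finitely many pieces in the partition of unity.

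I do not anticipate a real obstacle here: the argument is the classical Carleson--Sj\"olin reduction, and the only points requiring attention are (i) verifying that a single foliation direction $e$ can be chosen for each piece so that the separation condition \eqref{230815e1_24} holds uniformly, which is immediate once $U$ and $V$ have been separated by the partition of unity, and (ii) checking that the smooth amplitude $a(\bfx;\bfy)$ restricted to each slice still satisfies the hypotheses of Theorem \ref{230323theorem3_4}(a) with constants uniform in $\tau$. Both are routine, which is why the statement is phrased as a corollary.
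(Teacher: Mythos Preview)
Your proposal is correct and follows exactly the approach the paper indicates: the corollary is stated immediately after Theorem~\ref{230323theorem3_4}(a) with the one-line justification ``by item (a) of Theorem~\ref{230323theorem3_4} and Fubini's theorem (Carleson--Sj\"olin reduction as explained in Subsection~\ref{230814sub1_3})''. Your write-up simply spells out that reduction in detail, including the routine points (partition of unity to separate supports, uniform choice of foliation direction, and absorbing the $\bfy$-dependence of the amplitude), none of which the paper bothers to record.
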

In particular, if we take $\mc{M}$ to be the standard Euclidean space, then the bound \eqref{230906e2_14} with the range \eqref{230906e2_15} is precisely what Guo, Wang and Zhang \cite{GWZ22} obtained for the (Euclidean) Bochner-Riesz operator. Moreover, this bound is the currently best known bound for the Bochner-Riesz conjecture (stated in Subsection \ref{230814sub1_3}). Our bound \eqref{230906e2_14} generalizes that of Guo, Wang and Zhang \cite{GWZ22} for the Euclidean distance functions to distance functions on manifolds of constant curvatures. \\

Item (b) in Theorem \ref{230323theorem3_4} says that if $\mc{M}$ does not have constant sectional curvature, then the reduced Carleson-Sj\"olin operator will not satisfy as good bounds as those on manifolds of constant sectional curvature. By adapting the argument in Sogge \cite[page 290]{Sog17}, one can also show that for manifolds whose sectional curvatures are not constant, the estimate \eqref{230906e2_14} also fails for some $p> 2n/(n-1)$.

Item (c) in Theorem \ref{230323theorem3_4} is an improvement over the result of Xi \cite{Xi17} as stated in Theorem \ref{230906theorem1_12}. Recall that  Sogge \cite{Sog99} proved that Nikodym sets on three dimensional manifolds of constant curvatures must have Minkowski dimension $\ge 5/2$. Xi \cite{Xi17} generalized this result to higher dimensions, and proved that Nikodym sets on $n$ dimensional manifolds of constant curvatures must have Minkowski dimension $\ge (n+2)/2$.

It is reasonable to conjecture that for manifolds of constant sectional curvatures, the bound \eqref{230322e3_3} holds for the largest possible range 
\begin{equation}
p\ge \frac{2n}{n-1},
\end{equation}
the same as the range \eqref{230813e1_8} in the Fourier restriction conjecture. It is also reasonable to conjecture that every Nikodym set on manifolds of constant sectional curvatures must have a full Hausdorff dimension. Theorem \ref{230323theorem3_4} provides some partial evidence for such conjectures.  \\

So far we have studied the curved Kakeya problem for $\phi$ satisfying Bourgain's condition, bounds for (reduced) Carleson-Sj\"olin operators on manifolds of constant sectional curvatures, and Nikodym problems on manifolds of constant sectional curvatures. The settings in these problems are perhaps the ``best" possible in the sense that we conjecture all these problems would eventually have the same answers to their Euclidean counterparts. 

Moreover, Bourgain \cite{Bou91}, Wisewell \cite{Wis05}, Minicozzi and Sogge \cite{MS97}, Bourgain and Guth \cite{BG11}, Guth, Hickman and Iliopoulou \cite{GHI19} and Hickman and Iliopoulou \cite{HI22} have constructed ``worst" possible examples in these problems. 

Our next goal is to study ``intermediate" examples. We will only consider the case $n=3$. \\

Let $\phi(x, t; y)$ be a phase function satisfying H\"ormander's non-degeneracy conditions. Let $\epsilon_{\phi}>0$ be a small constant depending on $\phi$. Fix $(x_0, t_0; y_0)\in \B^{n-1}_{\epsilon_{\phi}}\times \B^1_{\epsilon_{\phi}}\times \B^{n-1}_{\epsilon_{\phi}}$. Let $X_0(t)$ be the unique solution to 
\begin{equation}\label{230816e2_14hh}
\nabla_y \phi(X_0(t)+x_0, t+t_0; y_0)=\nabla_y \phi(x_0, t_0; y_0).
\end{equation}   
The existence and uniqueness of the solution are guaranteed by H\"ormander's non-degeneracy conditions, and that $\epsilon_{\phi}$ is chosen sufficiently small. 
Denote 
\begin{equation}
\phi_0(x, t; y):=\phi(x+x_0, t+t_0; y+y_0)-\phi(x_0, t_0; y+y_0).
\end{equation}
Moreover, denote 
\begin{equation}
D_{ij}(t):= 
\partial_{y_i}
\partial_{y_j}
\phi_0(
X_0(t), t; 0
), \ \ 1\le i, j\le 2,
\end{equation}
and 
\begin{equation}
\mathfrak{D}(t):=
\det
\begin{bmatrix}
D_{11}(t), & D_{12}(t)\\
D_{21}(t), & D_{22}(t)
\end{bmatrix}
\end{equation}
Take an integer $k\ge 4$. We say that the phase function $\phi$ is of a \underline{contact order} $\le k$ at the point $(x_0, t_0)$ if the matrix 
\begin{equation}\label{230906e2_21}
\begin{bmatrix}
\mathfrak{D}'(0), & \mathfrak{D}''(0), & \dots, & \mathfrak{D}^{(k)}(0),\\
D_{11}'(0), & D_{11}''(0), & \dots, & D_{11}^{(k)}(0)\\
D_{12}'(0), & D_{12}''(0), & \dots, & D_{12}^{(k)}(0)\\
D_{22}'(0), & D_{22}''(0), & \dots, & D_{22}^{(k)}(0)
\end{bmatrix}
\end{equation}
has rank $4$. The matrix \eqref{230906e2_21} already implicitly appeared in Bourgain's work \cite{Bou91}, see equation (6.160) in \cite[page 364]{Bou91}.

\begin{theorem}\label{230816theorem2_4}
Let $k\in \N$ and $k\ge 4$. Take $n=3$ and $\phi(x, t; y)$ a smooth phase function of the normal form \eqref{230324e1_4}, that is, 
\begin{equation}\label{230816e2_17}
\phi(\bfx; y)=x\cdot y+ t\inn{y}{Ay}+ O(|t| |y|^3+ |\bfx|^2 |y|^2).
\end{equation} 
Assume that $A$ is positive definite and that  the contact order of $\phi$ at the origin $x=0, t=0, y=0$ is $\le k$. 
Then there exists $\epsilon_{\phi}>0$ depending on $\phi$, and
\begin{equation}
\epsilon_k= \frac{1}{9k-6},
\end{equation}
 such that 
\begin{equation}
\norm{T^{(\phi)}_N f}_{L^p(\R^3)} \lesim_{\phi, a, p, \epsilon} N^{-\frac{3}{p}+\epsilon} \norm{f}_{L^p(\R^{2})}
\end{equation}
holds for all
\begin{equation}\label{231023e2_25zz}
p\ge \frac{10}{3}-\epsilon_k,
\end{equation}
all $\epsilon>0, N\ge 1$, and all smooth amplitude functions $a$ supported in $\B^{2}_{\epsilon_{\phi}}\times \B^1_{\epsilon_{\phi}}\times \B^{2}_{\epsilon_{\phi}}$.
\end{theorem}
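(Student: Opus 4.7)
The strategy is a refinement of the Bourgain--Guth / Guth--Hickman--Iliopoulou scheme that produced the $p\ge 10/3$ bound from Theorem \ref{230814theorem1_3} in the positive definite, three dimensional case. In that scheme, after a broad--narrow decomposition at scale $K^{-1}$, the narrow (concentrated) contribution is bounded by using that frequency caps can, in the worst case, be grouped into a flat ``plate''---and such plate concentration is precisely what the sharp Bourgain/Wisewell/Minicozzi--Sogge examples saturate. Finite contact order $\le k$ at the origin forbids the plate from being truly flat past a certain intermediate spatial scale, which lets us insert a gain and beat the $10/3$ threshold by $\epsilon_k = 1/(9k-6)$.

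Concretely, I would first reduce to the normal form \eqref{230816e2_17} with $A$ positive definite, perform the standard wave packet decomposition at scale $N^{-1/2}$, and set up the GHI broad--narrow iteration: at each step, a ball is decomposed into frequency caps, the broad contribution is bounded via the trilinear (or $k$-broad, $k=3$) oscillatory integral estimate of Bennett--Carbery--Tao / Bourgain--Guth, and the narrow contribution is treated by induction after grouping caps into plate-like neighborhoods of lines in frequency. The loss to $10/3$ comes entirely from this narrow plate case. The plan is to replace the plate input by a sharper one using a finite scale of twisting.

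The new input is a \emph{plate-breakdown lemma}: if $\phi$ has contact order $\le k$ at the origin, then on any spatial ball of radius $\rho\gtrsim N^{-1/k}$ and for any $\rho^{1/2}$-cap of frequencies, the family of central curves $\Gamma^{(\phi)}_y(\omega)$ does \emph{not} fit into a Euclidean $\rho$-plate. The heuristic is that the osculating data of $\Gamma^{(\phi)}_y$ along the parametrized geodesic $X_0(t)$ are precisely the quantities $D_{11}(t), D_{12}(t), D_{22}(t)$ and their combinations (as in equation (6.160) of Bourgain \cite{Bou91}), and the rank-4 condition on the matrix \eqref{230906e2_21} says that these quantities together with their determinant $\mathfrak{D}(t)$ are linearly independent to order $k$. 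This provides a quantitative twisting of the plates at order $k$ in $t$. One would verify this by a direct Taylor-expansion calculation and a compactness argument to pass from the pointwise rank condition to a uniform quantitative bound on $\B^{2}_{\epsilon_\phi}\times \B^1_{\epsilon_\phi}$. Once the plate-breakdown lemma is established, a two-scale version of the BG/GHI iteration becomes available: below scale $\rho \sim N^{-1/k}$ the plate heuristic is still valid and the baseline GHI estimate applies, while above that scale the twist forces an extra decoupling-type gain. A careful accounting of the two regimes, together with the same interpolation exponents that GHI use to convert their $k$-broad bound into a scalar $L^p$ bound, is what produces the explicit constant $\epsilon_k=1/(9k-6)$.

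The hard part will be the plate-breakdown lemma and its quantitative implementation. One must translate the geometric rank condition on \eqref{230906e2_21} into a transversality/Kakeya statement that plugs directly into the GHI iteration: this is delicate because the natural output of the rank condition is a lower bound on some Wronskian-type expression, whereas the iteration needs, for example, a polynomial Wolff axiom or a small-cap decoupling at the intermediate scale. A secondary obstacle is numerical: the factor $9k-6$ is tight, so one has to optimise the trade-off between the scale $\rho$ at which plate concentration is allowed and the additional gain beyond that scale, and simultaneously control the losses from the wave packet decomposition and the broad--narrow parameter $K$, so that no further $1/k$-losses leak in.
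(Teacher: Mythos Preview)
Your proposal takes a genuinely different route from the paper. You aim for a Bourgain--Guth style broad--narrow iteration, with the contact order feeding in through a ``plate-breakdown lemma'' at spatial scales $\gtrsim N^{-1/k}$. The paper instead runs the \emph{polynomial partitioning} algorithm of \cite{Gut18, HR19, GWZ22}: after reducing to a $k$-broad estimate, it iterates the partitioning lemma until the mass concentrates on $r_2^{1/2+\delta_2}$-neighborhoods of two-dimensional algebraic varieties $S_2$ at an intermediate radius $r_2$. The contact order then enters not as a plate-twisting statement but as a \emph{polynomial Wolff axiom} (Theorem~\ref{230831theorem4_6} and Claim~\ref{230902claim4_9}): the rank-$4$ condition on \eqref{230906e2_21} is shown to be equivalent to a uniform lower bound $\int |\det(M + \nabla_\xi^2\phi(\Phi(v,t;\xi),t;\xi))|\,dt \gtrsim_\phi 1$ over all $2\times 2$ matrices $M$, which via the machinery of \cite[Theorem~3.1]{GWZ22} bounds the number of direction-separated curved tubes lying in any semi-algebraic set of given volume. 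At rescaled scales this degrades by a factor $(N/r_2)^{k-2}$, and combining with a second estimate coming from comparison with a translation-invariant phase yields $\|f^*_{\iota,S_2}\|_2^2 \lesssim r_2^{-1/(2(k-1))}\|f\|_\infty^2$ (Proposition~\ref{230902prop4_8}). Interpolating this against the baseline bound $D_2^{-2}\|f\|_2^2$ via a free parameter $\gamma$ and optimizing gives exactly $p = 3 + (k-1)/(3k-2) = 10/3 - \epsilon_k$.

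The gap in your proposal is precisely the one you yourself identify: you do not say how the plate-breakdown lemma would plug into the iteration to produce a \emph{sharp} numerical gain, and the two-scale threshold $N^{-1/k}$ is not the right scaling (in the paper the relevant dichotomy is $r_2 \lessgtr N^{2/3}$, and the exponent governing the gain is $1/(2(k-1))$, not $1/k$). The paper sidesteps these issues entirely by working inside algebraic varieties rather than plates: the rank condition becomes a clean determinant lower bound, and the polynomial Wolff machinery converts it mechanically into an $L^2$ improvement with explicit $k$-dependence. Your approach is closer in spirit to Bourgain's original argument in \cite{Bou91} and might recover the $k=4$ endpoint (which, as the paper remarks, matches \cite[Theorem~2]{BG11}), but making it yield exactly $1/(9k-6)$ for all $k$ would in effect require reproving the polynomial Wolff axiom inside the broad--narrow framework.
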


If we take $k=2$, then the range \eqref{231023e2_25zz} becomes $p\ge 3.25$, the range of Guth \cite{Gut16}. This is not a coincidence, and in the proof of Theorem \ref{230816theorem2_4} we will generalize the polynomial Wolff axioms in \cite{Gut16} to H\"ormander-type oscillatory integrals.  

If we take $k=4$, the smallest value of $k$ that is allowed by Theorem \ref{230816theorem2_4}, then the range \eqref{231023e2_25zz} becomes $p\ge 3.3$, the range of Bourgain and Guth \cite[Theorem 2]{BG11}. This seems unlikely not a coincidence.

\bigskip

The notion of contact orders also in some sense already appeared in Sogge's work \cite{Sog99}.

\begin{theorem}\label{230711thm2_5}
Let $\mc{M}$ be a smooth manifold as in Subsection \ref{230814sub1_3} of dimension three. Assume that $\mc{M}$ satisfies Sogge's chaotic curvature condition at the origin. Then there exists $\epsilon_{\mc{M}}>0$ depending only on $\mc{M}$, such that if we let 
\begin{equation}\label{230816e2_25}
\phi(x, t; y):=\dist((x, t), (y, y_3)),
\end{equation}
where $y_3\in (0, \epsilon_{\mc{M}})$, then $\phi$ has a contact order $\le 4$ at $(x, t; y)=(0, 0; 0)$. 
\end{theorem}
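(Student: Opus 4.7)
The plan is to interpret the contact-order matrix \eqref{230906e2_21} in Riemannian-geometric terms and reduce its rank condition to a statement about Jacobi fields along a distinguished geodesic, whereupon Sogge's chaotic curvature condition furnishes exactly the required non-degeneracy. Since $\phi(x,t;y)=\dist((x,t),(y,y_3))$, for fixed base point $y_0=0$ the gradient $\nabla_y \phi((x,t);0)$ is, up to sign, the unit tangent at $(0,y_3)$ to the geodesic joining $(x,t)$ to $(0,y_3)$. Hence the defining equation \eqref{230816e2_14hh} says that $t\mapsto (X_0(t),t)$ parametrizes, by its last coordinate, a segment of the unique geodesic $\gamma$ through the base point and through $(0,y_3)$; for $(x_0,t_0)=0$ this $\gamma$ is the geodesic from the origin to $(0,y_3)$.

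Next I would compute $D_{ij}(t)=\partial_{y_i}\partial_{y_j}\phi_0(X_0(t),t;0)$ via the classical formula expressing the Hessian of the distance function (in its second argument) through Jacobi fields along the connecting geodesic. Let $\gamma_t$ be the geodesic joining $(X_0(t),t)$ to $(0,y_3)$ with length $L(t)$, and let $\{J_1,J_2\}$ be the Jacobi fields along $\gamma_t$ that vanish at $(X_0(t),t)$ and satisfy $J_i(L(t))=e_i$ for an orthonormal frame $\{e_1,e_2\}$ at $(0,y_3)$ orthogonal to $\dot\gamma_t(L(t))$. Then $D_{ij}(t)$ is expressible through the shape-operator quantities $\langle \nabla_s J_i,e_j\rangle$, which in Fermi coordinates along $\gamma$ admit a Taylor expansion in $t$ governed by the Jacobi equation $J''+R(J,\dot\gamma)\dot\gamma=0$. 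The first few coefficients are universal polynomials in $y_3$, the curvature tensor $R$ at the origin, and its iterated covariant derivatives along $\dot\gamma$. Passing to the determinant, $\mathfrak{D}(t)$ expands so that its low-order derivatives encode trace information, i.e.\ the action of the Ricci endomorphism $\bar{\ricci}$ on the transverse space.

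The core step is to show that, in an orthonormal frame $\{X,X^\perp\}$ transverse to $\dot\gamma$ obtained by parallel-transporting $X(0)$, the $4\times 4$ determinant formed from $\mathfrak{D}^{(k)}(0)$ and $D_{ij}^{(k)}(0)$ for $k=1,2,3,4$ factors, up to a nonzero scalar depending only on $y_3$ and on the positive-definite leading matrix $A$ in \eqref{230816e2_17}, into a universal combination of $Y^\perp(0)$ and $\nabla_{\dot\gamma}Y^\perp(0)$ that vanishes only when both of these curvature quantities vanish simultaneously. Since Sogge's chaotic curvature condition precisely excludes this joint vanishing along \emph{every} geodesic through the origin and \emph{every} admissible $X(0)$, and since every such $\gamma$ arises as above by choosing $y_3$ small and varying the terminal tangent direction, the matrix \eqref{230906e2_21} has rank $4$, and the contact order is $\leq 4$.

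The hard part will be the explicit Jacobi-field bookkeeping in the second paragraph: matching derivatives of $D_{ij}$ and $\mathfrak{D}$ with the tensorial invariants $Y^\perp$ and $\nabla_{\dot\gamma}Y^\perp$ requires a carefully chosen Fermi frame along $\gamma$, tracking how the normal-form reduction to \eqref{230816e2_17} renormalizes these invariants, and verifying that the expansion is sharp enough that no accidental degeneracy at orders $1,2,3$ produces a lower-order vanishing that masks the Sogge quantities. Because the transverse space is only two-dimensional when $n=3$, this algebra remains tractable; indeed, as the paper itself notes, the matrix \eqref{230906e2_21} already appeared in Bourgain's analysis \cite[(6.160)]{Bou91}, and one should expect his computations to carry over once his coordinates are identified with Fermi coordinates along $\gamma$.
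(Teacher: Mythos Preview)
Your outline is essentially the paper's proof: pass to Fermi coordinates along the geodesic from the origin to $(0,y_3)$, express $D_{ij}(t)$ via Jacobi fields and the Hessian-of-distance formula (the paper invokes Sakai's Remark~4.11), Taylor-expand the matrix Jacobi system $\partial_{s'}^2A+AR=0$ in the small parameter $y_3$, and compute the $4\times 4$ determinant to leading order $y_3^5$, whose coefficient is (up to a nonzero constant) the $2\times 2$ determinant of the off-diagonal Ricci data $(\mfg_{33,12},\,\mfg_{33,11}-\mfg_{33,22})$ and its $\dot\gamma$-derivative. Sogge's condition, quantified over all transverse $X(0)$, is precisely the linear independence of these two vectors, so your mechanism is correct; one simplification is that no normal-form reduction is needed, since contact order is coordinate-invariant and the paper works directly in Fermi coordinates.
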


As an immediate corollary of Theorem \ref{230711thm2_5} and Theorem \ref{230816theorem2_4}, we obtain 
\begin{corollary}
Let $\mc{M}$ be a three dimensional manifold as in Theorem \ref{230711thm2_5}. Then the Carleson-Sj\"olin operator $T^{(\mc{M})}_N$ on $\mc{M}$ satisfies 
\begin{equation}
\norm{T^{(\mc{M})}_N f}_{L^p(\mc{M})} \lesim_{\mc{M}, p, a, \epsilon} N^{-\frac{3}{p}+\epsilon} \norm{f}_{L^p(\mc{M})},
\end{equation}
for all $p\ge 3.3$, $N\ge 1$ and $\epsilon>0$. 
\end{corollary}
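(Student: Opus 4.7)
The plan is to derive the bound directly from Theorem \ref{230711thm2_5} and Theorem \ref{230816theorem2_4}, linked by a Carleson-Sj\"olin-type slicing of the input variable. Since the amplitude $a(\bfx;\bfy)$ in \eqref{230509e1_1bb} is supported away from the diagonal, a partition of unity lets me assume that, writing $\bfy = (y, y_3)$, the last coordinate $y_3$ ranges over a closed interval $[c, \epsilonm]$ with $c>0$. Fubini then gives
\begin{equation*}
T_N^{(\mc{M})} f(\bfx) = \int_c^{\epsilonm} \bigl(T_N^{(\phi_{y_3})} f_{y_3}\bigr)(\bfx)\, dy_3,
\end{equation*}
where $\phi_{y_3}(\bfx; y) := \dist(\bfx, (y, y_3))$, $f_{y_3}(y) := f(y, y_3)$, and $T_N^{(\phi_{y_3})}$ is the H\"ormander-type oscillatory integral operator with phase $\phi_{y_3}$ and amplitude $\bfx\mapsto a(\bfx;(y,y_3))$. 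Minkowski's integral inequality followed by H\"older on the bounded $y_3$-interval reduces \eqref{230906e2_14} to the uniform-in-$y_3$ bound $\norm{T_N^{(\phi_{y_3})} g}_{L^p(\R^3)} \lesim_\epsilon N^{-3/p+\epsilon} \norm{g}_{L^p(\R^2)}$ for every $p\ge 3.3$.

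To apply Theorem \ref{230816theorem2_4} to each $\phi_{y_3}$, three hypotheses must be verified. H\"ormander's non-degeneracy condition is exactly what item (a) of Theorem \ref{230617thm2_1} supplies, and the standard change of variables from \cite[p.~323]{Bou91} puts $\phi_{y_3}$ into the normal form \eqref{230324e1_4}. The resulting matrix $A$ is positive definite; this is the Riemannian analog of the ellipticity of the Euclidean Bochner-Riesz phase, and reflects the positivity of the metric tensor in the leading-order expansion of $\dist$ about the diagonal. The remaining hypothesis, that $\phi_{y_3}$ has contact order $\le 4$ at the base point $x=t=y=0$, is precisely the conclusion of Theorem \ref{230711thm2_5}, which applies because $\mc{M}$ is assumed to satisfy Sogge's chaotic curvature condition.

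Invoking Theorem \ref{230816theorem2_4} with $n=3$ and $k=4$ then yields the desired bound in the range \eqref{231023e2_25zz}. Since
\begin{equation*}
\epsilon_4 = \frac{1}{9\cdot 4 - 6} = \frac{1}{30}, \qquad \frac{10}{3} - \frac{1}{30} = \frac{99}{30} = \frac{33}{10},
\end{equation*}
this range is exactly $p\ge 3.3$, matching the claim. The one point requiring mild care is the uniformity in $y_3$ of the implicit constant: the data defining $\phi_{y_3}$ (in particular the normal-form matrix $A$, the chaotic-curvature invariants of Definition \ref{230821defi1_11}, and the matrix \eqref{230906e2_21}) depend smoothly on $y_3$, so a standard compactness argument on the closed interval $[c, \epsilonm]$ delivers the required uniformity. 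This is the only non-black-box step of the argument, and I expect it to be routine rather than a genuine obstacle.
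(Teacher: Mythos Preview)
Your proposal is correct and takes essentially the same approach as the paper: the paper records this corollary as ``immediate'' from Theorem~\ref{230711thm2_5} and Theorem~\ref{230816theorem2_4} via the Carleson--Sj\"olin reduction of Subsection~\ref{230814sub1_3}, and your write-up supplies exactly those details (the $y_3$-slicing, the ellipticity of the reduced phase noted in Section~5.2, and the arithmetic $\frac{10}{3}-\frac{1}{30}=3.3$).
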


Let us briefly explain the geometric intuition behind chaotic curvatures. Interested readers should read \cite[Section 3]{Sog99}, especially Proposition 3.2 there, in order to have a better understanding of Sogge's chaotic curvature.

Let $\mc{M}$ be a three dimensional manifold. Assume that $\mc{M}$ satisfies Sogge's chaotic curvature condition. Take a coordinate patch, and express it in Fermi coordinates (see Definition \ref{230906defi3_1} below). Take a coordinate plane in the Fermi coordinate. Then $\mc{M}$ satisfying chaotic curvature conditions means that no any geodesic in $\mc{M}$ can have a contact order $\ge 5$ with the given coordinate plane. In other words, if we have a geodesic that is first order tangent, second order tangent and third order tangent to the coordinate plane, then it can not be fourth order tangent to it.

\section{Proof of Theorem \ref{230711thm2_5}}

\subsection{Preliminaries in Riemannian geometry}

Let $\mc{M}$ be a smooth Riemannian manifold of dimension $n$.  Let $g$ be the metric tensor on $\mc{M}$. Let $\{e_i\}_{i=1}^n$ be a basis of the tangent space. Denote 
\begin{equation}
g_{ij}:=g({e_i}, {e_j}). 
\end{equation}
We use $(g^{kl})_{1\le k, l\le n}$ to denote the inverse of $g$. Denote
\begin{equation}
g_{ij, k}:=\nabla_{e_k} g_{ij}, \ \ g_{ij, kl}:=\nabla_{e_k}\nabla_{e_l}g_{ij}.
\end{equation}
The covariant derivative of a function $f$ defined on $\mc{M}$ is defined to be 
\begin{equation}
\bar{\nabla} f:=\sum_{ij} g^{ij}\cdot (\nabla_{e_j} f) e_i.
\end{equation}
Here we add a bar on top of $\nabla$ just to distinguish $\bar{\nabla}f$ from $\nabla f$; we want $\bar{\nabla}f$ to be a vector field ($(1, 0)$-tensor), while $\nabla f$ is a one-form ($(0, 1)$-tensor).  The covariant derivative of a vector field $Y$ ($(1, 0)$-tensor) is defined to be the $(1, 1)$-tensor $\nabla Y$ given by 
\begin{equation}
(\nabla Y)(X):=\nabla_X Y,
\end{equation}
where $X$ is a vector field. We then define the Hessian of a function $f$ to be a $(0, 2)$-tensor given by 
\begin{equation}\label{230820e3_4}
(\mathrm{Hessian} f)(X, Y):=g(
\nabla_X \bar{\nabla}f, Y
)
\end{equation}
where $X, Y$ are vector fields. \\

\underline{Christopher symbols} $\Gamma^k_{ij}$ are defined by \footnote{In the literature and in textbooks, the notation $\Gamma^k_{\ ij}$ is also often used. }
\begin{equation}
\nabla_{e_i}e_j=\sum_k \Gamma^k_{ij} e_k.
\end{equation}
In local coordinates they can be computed by  
\begin{equation}
\Gamma^k_{ij}= 
\frac12 \sum_m g^{km}(
g_{mi, j}+g_{mj, i}-g_{ij, m}
).
\end{equation}
Let $R: T\mc{M}\times T\mc{M}\times T\mc{M}\to T\mc{M}$ be the 
\underline{Riemannian curvature tensor}, defined by 
\begin{equation}
R(X, Y) Z=\nabla_X \nabla_Y Z-\nabla_Y \nabla_X Z-\nabla_{[X, Y]} Z.
\end{equation}
 Define $R_{ijk}^{\ \ \ l}$ by 
\begin{equation}
R(e_i, e_j) e_k=\sum_l R_{ijk}^{\ \ \ l} e_l.
\end{equation}
If we denote 
\begin{equation}
R_{ijkl}=g(R(e_i, e_j)e_k, e_l),
\end{equation}
then we have 
\begin{equation}
R_{ijkl}= \sum_p R_{ijk}^{\ \ \ p} g_{pl}.
\end{equation}
In local coordinates, we have 
\begin{equation}\label{230821e3_10zz}
R^{\ \ \ q}_{lki}=-\pnorm{
\partial_k \Gamma^q_{li}-
\partial_l \Gamma^q_{ki}
-\sum_p \Gamma^q_{lp}\Gamma^p_{ki}+\sum_p \Gamma^q_{kp}\Gamma^p_{li}
}.
\end{equation}
The \underline{Ricci tensor} is defined by 
\begin{equation}\label{230821e3_11}
\ricci(Y, Z):=\mathrm{tr}(X\mapsto R(X, Y)Z),
\end{equation}
where the right hand side means the trace of the map $X\mapsto R(X, Y)Z$.

\begin{definition}[Fermi coordinates]\label{230906defi3_1} Take a three dimensional manifold $\mc{M}$. Let $\gamma: [0, L]\to \mc{M}$ be an arclength parametrized geodesic. At $s=0$, pick two unit tangent vectors $E_1(0), E_2(0)\in T_{\gamma(0)}\mc{M}$ that form an orthonormal basis together with $\dot{\gamma}(0)$. Let $E_1(s), E_2(s)$ be the parallel transports of $E_1(0), E_2(0)$ along $\gamma(s)$, respectively. One then assigns Fermi coordinates $(x_1, x_2, t)$ to a point if 
\begin{enumerate}
\item this point lies on the geodesic passing through the point $\gamma(t)$ with the unit tangent vector at this point being 
\begin{equation}
\frac{1}{|(x_1, x_2)|} \pnorm{
x_1 E_1(t)+ x_2 E_2(t)
}
\end{equation}
where $|(x_1, x_2)|$ means Euclidean norm;
\item the distance from this point to $\gamma(t)$ is $|(x_1, x_2)|$. 
\end{enumerate}
\end{definition}

In Fermi coordinates, metrics satisfy (see \cite[page 5]{Sog99})
\begin{enumerate}
\item
\begin{equation}\label{230713e3_4}
\begin{bmatrix}
g_{11}(\bfx), & g_{12}(\bfx), & g_{13}(\bfx)\\
g_{21}(\bfx), & g_{22}(\bfx), & g_{23}(\bfx)\\
g_{31}(\bfx), & g_{32}(\bfx), & g_{33}(\bfx)
\end{bmatrix}
\begin{bmatrix}
x_1\\
x_2\\
0
\end{bmatrix}
=\begin{bmatrix}
x_1\\
x_2\\
0
\end{bmatrix},
\ \ \bfx=(x_1, x_2, t),
\end{equation}
\item 
\begin{equation}\label{230713e3_5}
g_{ij}(\bfx)=\delta_{ij}, \ \ \text{whenever } x_1=x_2=0.
\end{equation}
In other words, the matrix $(g_{i, j})_{1\le i, j\le 3}$ is always the identity matrix along the geodesic $\gamma$. 
\item 
\begin{equation}\label{230713e3_6}
\frac{\partial}{\partial x_i} g_{jk}(\bfx)=0, \ \ i=1, 2, \ \forall j, k, \ \ \text{whenever } x_1=x_2=0.
\end{equation}
\end{enumerate}
Indeed, \eqref{230713e3_6} can also be written as 
\begin{equation}\label{230713e3_7}
\frac{\partial}{\partial x_i} g_{jk}(\bfx)=0, \ \ \ \forall i, j, k, \ \ \text{whenever } x_1=x_2=0.
\end{equation}

\bigskip

In the end of this subsection, we state one lemma that will be used multiple times in the rest of the paper. 

Let $\epsilon_{\mc{M}}>0$ be a small constant depending on $\mc{M}$. Fix $\epsilon\in (0, \epsilon_{\mc{M}})$. Denote  
\begin{equation}
\phi_{\epsilon}(x, t; y):=\dist((x, t), (y, \epsilon)).
\end{equation}
We always consider $(x, t)$ in a small neighborhood of the origin, and $(y, \epsilon)$ in a small neighborhood of $(0, \epsilon)$.

\begin{lemma}\label{230411lemmaA_1}
Fix a point $(y_0, \epsilon)$. Let $\gamma$ be a geodesic passing through $(y_0, \epsilon)\in \mc{M}$. Then 
\begin{equation}
(
\partial_{y_1} \phi_{\epsilon}(\bfx; y_0), \dots, 
\partial_{y_{n-1}} \phi_{\epsilon}
(\bfx; y_0)
)
\end{equation}
 stays constant when $\bfx=(x, t)$ moves along $\gamma$. 
\end{lemma}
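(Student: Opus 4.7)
The plan is to reduce this lemma to the classical fact that the gradient (with respect to one endpoint) of the Riemannian distance function is tangent to the connecting geodesic. Concretely, for $\bfx, q$ in a sufficiently small neighborhood of the origin there is a unique unit-speed minimizing geodesic $\sigma:[0,L]\to\mc{M}$ with $\sigma(0)=\bfx$ and $\sigma(L)=q$, and the first variation formula for arclength yields
\[
\bar\nabla_q\dist(\bfx,q)=\dot\sigma(L),
\]
i.e.\ the unit tangent at $q$ to the minimizing geodesic from $\bfx$ to $q$, pointing away from $\bfx$. The constant $\epsilonm$ will be taken small enough that this identity applies throughout the region of interest.

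With this tool in hand the proof is almost immediate. Parametrize the given geodesic $\gamma$ by arclength so that $\gamma(0)=q_0:=(y_0,\epsilon)$, and consider $\bfx=\gamma(s)$ for small $s\neq 0$. The unique minimizing geodesic from $\bfx$ to $q_0$ is the reversed segment of $\gamma$, whose unit tangent at $q_0$ equals $\mp\dot\gamma(0)$ according to the sign of $s$. The crucial observation is that this vector depends only on $\gamma$ and $q_0$, not on $s$. Hence
\[
\bar\nabla_q\phi_{\epsilon}(\bfx;y)\big|_{y=y_0}=\mp\dot\gamma(0),
\]
and pairing with the coordinate vector fields via the metric gives
\[
\partial_{y_j}\phi_{\epsilon}(\bfx;y_0)=g_{q_0}\!\left(\mp\dot\gamma(0),\,\partial/\partial y_j\big|_{q_0}\right),\qquad j=1,\dots,n-1,
\]
which is independent of $\bfx$ as long as $\bfx$ stays on a fixed side of $q_0$ along $\gamma$.

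I do not anticipate any genuine obstacle. Two minor points to handle: first, one must have $\bfx\ne q_0$ for $\phi_{\epsilon}$ to be smooth in $y$ near $y_0$, but this is automatic in all intended applications since the amplitudes in the relevant Carleson--Sj\"olin and oscillatory integral operators are supported away from the diagonal; second, the sign $\mp$ flips when $\bfx$ crosses $q_0$, reflecting the fact that $\dist(\cdot,q_0)$ fails to be smooth at the source point---so the lemma is to be read locally, on each side of $q_0$, which is exactly what is needed for later applications (e.g.\ identifying geodesics through $(y_0,\epsilon)$ as central curves $\Gamma_{y_0}(\omega)$ of the tubes $T^{\delta,(\phi_{\epsilon})}_{y_0}(\omega)$). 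The gradient-of-distance formula itself is a standard one-line consequence of the first variation of arclength together with Gauss's lemma, so no auxiliary development is required.
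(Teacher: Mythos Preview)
Your proposal is correct and takes essentially the same approach as the paper: both hinge on the identity $\bar\nabla_q\dist(\bfx,q)=\dot\sigma(L)$ (the paper derives it via normal coordinates and the Gauss Lemma, you via the first variation formula), then observe that this tangent vector at $(y_0,\epsilon)$ is determined by the geodesic $\gamma$ alone and not by the location of $\bfx$ along it, and finally convert to partial derivatives $\partial_{y_j}\phi_\epsilon$ by pairing with coordinate vectors through the metric. Your explicit remark about the sign flip across $q_0$ is a nice clarification that the paper leaves implicit, since in the intended setting $\bfx$ and $(y_0,\epsilon)$ are always separated.
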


\begin{proof}[Proof of Lemma \ref{230411lemmaA_1}]
	We parameterize $\gamma$ by an arclength parameter $s$ such that $\gamma(0)=\bfx$ and $\gamma(L)=\bfy_0:=(y_0, \epsilon)$, where $L=d_{\mcm}(\bfx, \bfy_0)$. Pick a normal coordinate $z=(z_1,z_2, \dots,  z_n)$ centered at $\bfx$ and $\gamma'(0)=\partial_{z_1}$ at the origin point. Then for any $V=\sum_i V_i\partial_{z_i}\in T_{\bfx}\mc{M}$, the geodesic $\gamma_V$ starting at $\bfx$ with initial velocity $V$ is represented in normal coordinate by the radial line segment
	$$
	\gamma_V(s)=\exp_{\bfx}((sV_1, sV_2, \dots, sV_n)).
	$$
	In our case, there is $\gamma(s)=\exp_{\bfx}((s, 0, \dots, 0))$. In particular, $\bfy_0=\exp_{\bfx}((L, 0, \dots,  0))$ and the tangent vector
	\begin{equation}\label{tangent vector}
		\gamma'(L)=d\exp_{\bfx}|_{z=(L, 0, \dots,  0)}\partial_{z_1}.
	\end{equation}
The Riemannian distance function can be expressed explicitly in normal coordinate (see Corollary 6.12 in \cite[page 162]{Lee}, which is a corollary of the Gauss Lemma), and we have 
	$$
	d_{\mcm}(\bfx,\exp_{\bfx}(z))=\sqrt{z_1^2+z_2^2+\dots +z_n^2}.
	$$ 
	Then we can compute that
	\begin{equation}\label{gradient}
		\grad_{\bfy}d_{\mcm}(\bfx,\bfy_0)=d\exp_{\bfx}|_{z=(L, 0, \dots, 0)}\left(\sum_i \frac{z_i\partial_{z_i}}{|z|}\right)=d\exp_{\bfx}|_{z=(L,0,\cdots,0)}\partial_{z_1},
	\end{equation}
	where $\grad_{\bfy}$ is the covariant gradient taken in the $\bfy=(y, \tau)$ variable. 
	Combining equations (\ref{tangent vector}) and (\ref{gradient}), we can get 
	\begin{equation}\label{231023e3_22}
	\grad_{\bfy}d_{\mcm}(\bfx,\bfy_0)=\gamma'(L).
	\end{equation}
From this we see that $\grad_{\bfy}d_{\mcm}(\bfx,\bfy_0)$ stays constant when $\bfx$ moves along $\gamma$. 
In the end, by the chain rule we observe that 
	$$
	\partial_{y_i} \phi
	_{\epsilon}
	(\bfx; y_0)=
	\sum_{k, j}
	(\grad_{\bfy}d_{\mcm}(\bfx,\bfy_0))_{k}(g(\bfy_0))_{kj}\left(\frac{\partial \bfy}{\partial y}\right)_{ji}\Big|_{y=y_0},
	$$
which verifies that, once $\bfy_0$ is fixed,  $\partial_{y_i}\phi_{\epsilon}
(\bfx; y_0)$ stays constant when $\bfx$ moves along $\gamma$. 
\end{proof}

\subsection{Sogge's chaotic curvature condition}\label{230903section3_2}

In this part, we will prove Theorem \ref{230711thm2_5}.\\

First of all, it is elementary to see that contact orders are invariant under changes of coordinates. We therefore without loss of generality assume that we are in a Fermi coordinate.  Let $\epsilon>0$ be a small number to be chosen; its smallness will depend on the manifold $\mc{M}$. Assume that our Fermi coordinate is based on the geodesic 
\begin{equation}\label{230821e3_10}
\gamma(s)=(0, s), \ \forall s\in [0, \epsilon].
\end{equation}
We use $e_1, e_2, e_3$ to denote the coordinate vectors. 
Define vector fields on $\gamma$ by 
\begin{equation}\label{230903e3_19}
E_3(s):=\frac{\partial}{\partial \tau}\in T_{\gamma(s)}\mc{M}, \ \ 
E_i(s):=\frac{\partial}{\partial y_i}\in T_{\gamma(s)}\mc{M}, \ i=1, 2.
\end{equation}
In other words, $E_i(s)=e_i(\gamma(s)), i=1, 2, 3$, and therefore $\{E_i(s)\}_{i=1}^3$ forms an orthonormal basis for $T_{\gamma(s)}\mc{M}$.  Denote 
\begin{equation}\label{230903e3_20xx}
\phi_{\epsilon}(x, t; y):=\dist((x, t), (y, \epsilon))
\end{equation}
and 
\begin{equation}
\Phi(x, t; y, \tau):=\dist((x, t), (y, \tau)).
\end{equation}
Moreover, denote 
\begin{equation}
\phi_{0, \epsilon}(x, t; y):=\phi_{\epsilon}(x, t; y)-\phi_{\epsilon}(0, 0; y),
\end{equation}
and 
\begin{equation}\label{230903e3_23xx}
\Phi_{0}(x, t; y, \tau):=\Phi(x, t; y, \tau)-\Phi(0, 0; y, \tau).
\end{equation}
Here $(x, t)$ and $(y, \tau)$ are points on $\mc{M}$. Our goal is prove that if $\mc{M}$ satisfies Sogge's chaotic curvature condition at the origin, then the contact order of $\phi_{\epsilon}$ is $\le 4$ at $(x, t)=(0, 0), y=0$ if $\epsilon$ is chosen to be sufficiently small.\\

Before computing contact orders, let us first compute what Sogge's chaotic curvature says in the Fermi coordinate. Recall Definition \ref{230821defi1_11}. We pick the geodesic in Definition \ref{230821defi1_11} to be \eqref{230821e3_10}. The vector field $X(s)$ in Definition \ref{230821defi1_11} is the parallel transport of a unit vector $X(0)\perp \dot{\gamma}(0)$. Therefore, we can write 
\begin{equation}
X(s)= c_1 E_1(s)+ c_2 E_2(s), \ \ c_1^2+c_2^2=1.
\end{equation}
The vector field $Y(s)$ in Definition \ref{230821defi1_11} is therefore 
\begin{equation}
Y(s)= \bar{\ricci}(X(s)),
\end{equation}
and 
\begin{equation}
\begin{split}
Y^{\perp}(s)& = g\pnorm{
Y(s), c_2 E_1(s)-c_1 E_2(s)
}(c_2 E_1(s)-c_1 E_2(s))\\
&=
(c_1, c_2) 
\begin{bmatrix}
\ricci_{11}(s), & \ricci_{12}(s)\\
\ricci_{21}(s), & \ricci_{22}(s)
\end{bmatrix}
\begin{bmatrix}
0, & 1\\
-1, & 0
\end{bmatrix}
\begin{pmatrix}
c_1\\
c_2
\end{pmatrix}
(c_2 E_1(s)-c_1 E_2(s))
\end{split}
\end{equation}
where 
\begin{equation}
\ricci_{ij}(s):=g(\bar{\ricci}(E_i(s)), E_j(s))=\ricci(E_i(s), E_j(s)).
\end{equation}
Moreover, 
\begin{equation}
\begin{split}
& \nabla_{\dot{\gamma}(s)} Y^{\perp}(s)\\
& =(c_1, c_2) 
\begin{bmatrix}
\partial_s \ricci_{11}(s), & \partial_s \ricci_{12}(s)\\
\partial_s \ricci_{21}(s), & \partial_s \ricci_{22}(s)
\end{bmatrix}
\begin{bmatrix}
0, & 1\\
-1, & 0
\end{bmatrix}
\begin{pmatrix}
c_1\\
c_2
\end{pmatrix}
(c_2 E_1(s)-c_1 E_2(s)),
\end{split}
\end{equation}
where we used the fact that $E_1(s), E_2(s)$ are parallel transports along $\gamma(s)$, which says 
\begin{equation}
\nabla_{\dot{\gamma}(s)} E_j(s)=0, \ \ j=1, 2. 
\end{equation}
As we assume Sogge's chaotic curvature, we therefore can conclude that 
\begin{equation}\label{230822e3_45hh}
\begin{split}
& \anorm{
(c_1, c_2) 
\begin{bmatrix}
\ricci_{11}(0), & \ricci_{12}(0)\\
\ricci_{21}(0), & \ricci_{22}(0)
\end{bmatrix}
\begin{bmatrix}
0, & 1\\
-1, & 0
\end{bmatrix}
\begin{pmatrix}
c_1\\
c_2
\end{pmatrix}
}\\
& + 
\anorm{
(c_1, c_2) 
\begin{bmatrix}
\partial_s \ricci_{11}(0), & \partial_s\ricci_{12}(0)\\
\partial_s\ricci_{21}(0), & \partial_s\ricci_{22}(0)
\end{bmatrix}
\begin{bmatrix}
0, & 1\\
-1, & 0
\end{bmatrix}
\begin{pmatrix}
c_1\\
c_2
\end{pmatrix}
} \neq 0,
\end{split}
\end{equation}
for all $c_1, c_2$ satisfying $c_1^2+c_2^2=1$. \\

We compute $\ricci_{ij}(0)$ and  $\partial_s\ricci_{ij}(0)$. By the definition of Ricci tensors in \eqref{230821e3_11}, and that we are working with an orthonormal basis, we have 
\begin{equation}\label{230821e3_45}
\begin{bmatrix}
\ricci_{11}(s), & \ricci_{12}(s)\\
\ricci_{21}(s), & \ricci_{22}(s)
\end{bmatrix}
=
\begin{bmatrix}
R_{2112}(s)+ R_{3113}(s), & R_{3123}(s)\\
R_{3123}(s), & R_{1221}(s)+R_{3223}(s)
\end{bmatrix}
\end{equation}
where 
\begin{equation}
R_{ijkl}(s):=g(
R(E_i(s), E_j(s))E_k(s), E_l(s)
),
\end{equation}
and 
we used basic symmetries of the Riemannian curvature tensor. We continue to compute the right hand side of \eqref{230821e3_45}. 
\begin{claim}\label{230822claim3_4}
For $1\le i, j\le 2$, we have 
\begin{equation}
R_{i33j}(\bfx)=-\frac12 g_{33, ij}(\bfx),
\end{equation}
whenever $x_1=x_2=0$.
\end{claim}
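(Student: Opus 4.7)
My plan is to reduce the claim to a direct local-coordinate computation. The key observation is that the conditions defining Fermi coordinates (equations (3.5) through (3.7)) imply that at every point of the central geodesic $\gamma = \{x_1 = x_2 = 0\}$, one has $g_{ij} = \delta_{ij}$ and $\partial_k g_{ij} = 0$ for all $i, j, k$. As a consequence, all Christoffel symbols $\Gamma^{q}_{li}$ vanish on $\gamma$, so the quadratic terms in the paper's formula (3.10) for the Riemann tensor drop out at the point in question, leaving
$R^{\ \ \ q}_{lki} = \partial_k \Gamma^{q}_{li} - \partial_l \Gamma^{q}_{ki}$
at such a point.

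Next, I would substitute $(l, k, i, q) = (i, 3, 3, j)$ and expand each $\Gamma$ via $\Gamma^{q}_{ab} = \frac{1}{2} g^{qm}(g_{ma,b} + g_{mb,a} - g_{ab,m})$. Because $g^{jm}|_\gamma = \delta^{jm}$ and $\partial g|_\gamma = 0$, the terms involving $(\partial g^{-1})(\partial g)$ vanish, and after using commutativity of mixed partials one arrives at
$R_{i33j} = \tfrac{1}{2}\bigl(g_{ij,33} + g_{33,ij} - g_{i3,3j} - g_{3j,i3}\bigr)$
valid at any point of $\gamma$. This is nothing but the classical Euclidean-gauge formula for $R_{abcd}$ specialized to these indices.

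The remaining step is to verify that three of these four second derivatives vanish on $\gamma$. First, $g_{ij}(0, 0, t) = \delta_{ij}$ is constant in $t$ by (3.5), so $g_{ij,33}|_\gamma = 0$. Second, by interchanging the order of differentiation, $g_{i3,3j}|_\gamma = \partial_3 \partial_j g_{i3}|_\gamma$; for $j \in \{1, 2\}$ the function $t \mapsto \partial_j g_{i3}(0, 0, t)$ is identically zero by (3.6), so its $t$-derivative vanishes as well. The same argument handles $g_{3j,i3}|_\gamma$. Only the term $\tfrac{1}{2} g_{33,ij}$ survives, which is exactly the right-hand side of the claim.

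The main obstacle is purely bookkeeping: correctly matching the paper's index convention in (3.10) with the standard second-derivative Riemann formula, and remembering that the Fermi identity (3.7) extends (3.6) so that first derivatives of $g$ vanish along $\gamma$ in \emph{every} coordinate direction, including $x_3 = t$. No conceptual difficulty is expected.
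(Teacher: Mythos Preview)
Your proof is correct and follows the same overall strategy as the paper: compute $R_{i33j}$ in Fermi coordinates via the local formula (3.10), use that all first derivatives of $g$ vanish on $\gamma$ to drop the $\Gamma\Gamma$ terms, and then show the unwanted second derivatives of $g$ vanish on $\gamma$.

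The one substantive difference is in the final step. The paper keeps only the $-\partial_i\Gamma^j_{33}$ contribution, arrives at $R_{i33j}=-\tfrac12(2g_{3j,3i}-g_{33,ji})$, and then spends the bulk of the argument proving $g_{3j,3i}|_\gamma=0$ by differentiating the Gauss-lemma identity (3.4) several times and invoking the symmetry $R_{i33j}=R_{j33i}$. You instead retain both derivative terms, reach the symmetric formula
\[
R_{i33j}=\tfrac12\bigl(g_{ij,33}+g_{33,ij}-g_{i3,3j}-g_{3j,i3}\bigr),
\]
and kill the three extra summands by the more direct observation that $g_{ij}(0,0,t)=\delta_{ij}$ is constant in $t$ (so $g_{ij,33}|_\gamma=0$) and that $\partial_j g_{i3}(0,0,t)\equiv 0$ by (3.6), whence its $t$-derivative vanishes as well. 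This bypasses any use of (3.4) and the Riemann-tensor symmetry, so your route is slightly cleaner; the paper's route, on the other hand, shows how the radial identity (3.4) encodes the same information and may generalize more readily if one needed higher-order relations.
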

\begin{proof}[Proof of Claim \ref{230822claim3_4}]
By the formula \eqref{230821e3_10zz} and the property \eqref{230713e3_6} of Fermi coordinates, we have 
\begin{equation}\label{230822e3_48cc}
\begin{split}
& R_{i33j}=R^{\ \ \ j}_{i33} =\partial_i \Gamma^j_{33}\\
&=
\frac12 \partial_i g^{jk}(g_{3k, 3}+g_{3k, 3}-g_{33, k})
+\frac12 g^{jk}(g_{3k, 3i}+g_{3k, 3i}-g_{33, ki})\\
&= \frac12 (2g_{3j, 3i}-g_{33, ji})
\end{split}
\end{equation}
To prove the claim, it remains to show that 
\begin{equation}\label{230822e3_49}
g_{3j, 3i}(\bfx)=0, \ \ 1\le i, j\le 2,
\end{equation}
whenever $x_1=x_2=0$. \\

To show this, we will differentiate both sides of \eqref{230713e3_4}, that is, 
\begin{equation}\label{230713e3_4zzz}
\begin{bmatrix}
g_{11}(\bfx), & g_{12}(\bfx), & g_{13}(\bfx)\\
g_{21}(\bfx), & g_{22}(\bfx), & g_{23}(\bfx)\\
g_{31}(\bfx), & g_{32}(\bfx), & g_{33}(\bfx)
\end{bmatrix}
\begin{bmatrix}
x_1\\
x_2\\
0
\end{bmatrix}
=\begin{bmatrix}
x_1\\
x_2\\
0
\end{bmatrix}
\end{equation}
Take $\partial^2_{x_1}$ on both sides, we obtain 
\begin{equation}
\frac{\partial^2 g}{\partial x_1^2}
\begin{bmatrix}
x_1 \\
x_2\\
0
\end{bmatrix}
+ 
2\frac{\partial g}{\partial x_1}
\begin{bmatrix}
1 \\
0\\
0
\end{bmatrix}
=0
\end{equation}
By taking $\partial_{3}$, that is, taking $\partial_t$, we further obtain 
\begin{equation}
\frac{\partial^3 g}{\partial t\partial x_1^2}
\begin{bmatrix}
x_1 \\
x_2\\
0
\end{bmatrix}
+ 
2\frac{\partial^2 g}{\partial t\partial x_1}
\begin{bmatrix}
1 \\
0\\
0
\end{bmatrix}
=0
\end{equation}
from which we conclude that 
\begin{equation}
g_{13, 13}(\bfx)=0, 
\end{equation}
whenever $x_1=x_2=0$. Similarly, we can prove that \begin{equation}
g_{23, 23}(\bfx)=0, 
\end{equation}
whenever $x_1=x_2=0$. To prove the rest of \eqref{230822e3_49}, we take $\partial_1 \partial_2 \partial_3$ on both sides of \eqref{230713e3_4zzz}, and we obtain 
\begin{equation}\label{230822e3_55}
g_{23, 13}(\bfx)+g_{13, 23}(\bfx)=0,
\end{equation}
whenever $x_1=x_2=0$. Recall the calculations in \eqref{230822e3_48cc}. By basic symmetries of Riemannian curvature tensors, we have $R_{i33j}=R_{j33i}$, which further implies that 
\begin{equation}
g_{3j, 3i}=g_{3i, 3j}.
\end{equation}
This, combined with \eqref{230822e3_55}, will finish all the cases of \eqref{230822e3_49}. 
\end{proof}

We return to computing the right hand side of \eqref{230821e3_45}. There is one term $R_{2112}(s)$ that is not covered by Claim \ref{230822claim3_4}. It is natural to try to follow the same strategy of Claim \ref{230822claim3_4} and compute it. Whenever $x_1=x_2=0$, we have 
\begin{equation}\label{230822e3_57}
\begin{split}
-R_{2112}& =\partial_1 \Gamma^2_{21}-\partial_2 \Gamma^2_{11}\\
&=
\frac12 \partial_1
\pnorm{
\sum_l g^{2l} (
g_{2l, 1}+g_{1l, 2}-g_{21, l}
)
}
-
\frac12 \partial_2 \pnorm{
g^{2l} (
g_{1l, 1}+g_{1l, 1}-g_{11, l}
)
}\\
&= 
\frac12 \sum_l g^{2l} (
g_{2l, 11}+g_{1l, 21}-g_{21, l1}
)
-\frac12 g^{2l} (
g_{1l, 11}+g_{1l, 11}-g_{11, l1}
)\\
&=
\frac12(g_{22, 11}
-g_{12, 12}-g_{12, 12}+g_{11, 22}
)
\end{split}
\end{equation}
By taking derivatives on both sides of \eqref{230713e3_4zzz}, one can further simplify \eqref{230822e3_57} to 
\begin{equation}
R_{2112}=-\frac32 g_{11, 22}.
\end{equation}
This finishes computing \eqref{230821e3_45}, and we have 
\begin{equation}
\begin{split}
& \begin{bmatrix}
\ricci_{11}(s), & \ricci_{12}(s)\\
\ricci_{21}(s), & \ricci_{22}(s)
\end{bmatrix}\\
& =
-\begin{bmatrix}
\frac32 g_{11, 22}(s)+\frac12 g_{33, 11}(s), & \frac12 g_{33, 21}(s)\\
\frac12 g_{33, 21}(s), & \frac32 g_{11, 22}(s)+ \frac12 g_{33, 22}(s)
\end{bmatrix}
\end{split}
\end{equation}
where 
\begin{equation}\label{230823e3_44}
g_{ij, kl}(s):=g_{ij, kl}(\gamma(s)). 
\end{equation}
Let us go back to \eqref{230822e3_45hh} and try to express it using $g_{ij, kl}$ and their derivatives. Direct calculations show that \eqref{230822e3_45hh} is equivalent to saying that 
\begin{equation}\label{230822e3_61}
\begin{split}
& |(c_2^2-c_1^2)g_{33, 21}(0)
+c_1 c_2 (g_{33, 11}(0)-g_{33, 22}(0))
|\\
&
+
|(c_2^2-c_1^2)g_{33, 213}(0)
+c_1 c_2 (g_{33, 113}(0)-g_{33, 223}(0))
|\neq 0
\end{split}
\end{equation}
for all $c_1^2+c_2^2=1$. Let us point out here that the term $R_{2112}=-\frac32 g_{11, 22}$ does not appear in \eqref{230822e3_61} as it gets cancelled out in the middle of the computation. This is what Sogge meant in his paper \cite{Sog99} by saying that the chaotic curvature conditions involves ``off-diagonal" parts of the Ricci tensors. \\

So far we have finished computing the chaotic curvature. Now we start computing the contact order of $\phi_{\epsilon}$. 
\begin{claim}\label{230820claim3_1}
We have 
\begin{equation}\label{231023e3_53}
\frac{\partial}{\partial y_i} \frac{\partial}{\partial y_j} \phi_{0, \epsilon}\Big|_{\substack{
(x, t)=\gamma(s),\\
 y=0
 }}= \pnorm{\hessian \Phi_0}\Big|_{
 \substack{
 (x, t)=\gamma(s), \\
 (y, \tau)=(0, \epsilon)}
 }\pnorm{
\frac{\partial}{\partial y_i}, \frac{\partial}{\partial y_j}
}
\end{equation}
for every $s\in [0, \epsilon)$, $1\le i, j\le 2$. Here $\hessian$ is the covariant Hessian in the $(y, \tau)$ variables (see \eqref{230820e3_4}). Moreover, 
\begin{equation}\label{231023e3_54}
\pnorm{\hessian \Phi_0}\Big|_{
 \substack{
 (x, t)=\gamma(s), \\
 (y, \tau)=(0, \epsilon)}
 }\pnorm{
Y, \frac{\partial}{\partial \tau}
}=0, \ \ \forall Y\in T_{(0, \epsilon)}\mc{M},
\end{equation}
for every $s\in [0, \epsilon)$, $1\le i\le 2$.
\end{claim}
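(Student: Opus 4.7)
The plan is to prove both identities by combining two classical features of our setup: in the Fermi coordinates based on $\gamma$ the Christoffel symbols vanish identically along $\gamma$, so that the covariant Hessian of any smooth function agrees with its coordinate Hessian at any point of $\gamma$; and for each fixed $\bfx$ near the origin, the gradient $\grad_\bfy\Phi(\bfx;\cdot)$ is a unit vector field on a punctured neighborhood of $\bfx$, hence $g$-orthogonal to its own covariant derivative in any direction.

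For the first identity, I would feed \eqref{230713e3_7} into the Christoffel formula to obtain
\begin{equation*}
\Gamma^k_{ij}(\gamma(s))=\frac{1}{2}g^{km}\bigl(g_{mi,j}+g_{mj,i}-g_{ij,m}\bigr)(\gamma(s))=0
\end{equation*}
for every $i,j,k$ and every $s\in[0,\epsilon]$. In particular every Christoffel symbol vanishes at $\bfy_0:=(0,\epsilon)=\gamma(\epsilon)$, so the coordinate expression $(\hessian f)(\partial_{y_i},\partial_{y_j})=\partial_{y_i}\partial_{y_j}f-\sum_m\Gamma^m_{ij}\partial_m f$ collapses to $(\hessian f)(\partial_{y_i},\partial_{y_j})|_{\bfy_0}=\partial_{y_i}\partial_{y_j}f|_{\bfy_0}$. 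Applying this to $f(\bfy):=\Phi_0(\gamma(s);\bfy)$, and noting that the definition $\phi_{0,\epsilon}(x,t;y)=\Phi_0(x,t;y,\epsilon)$ makes a $y$-partial at $\tau=\epsilon$ the same in either notation, yields \eqref{231023e3_53}.

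For the second identity, fix $\bfx\in\{(0,0),\gamma(s)\}$ with $s\in[0,\epsilon)$ and set $V_\bfx(\bfy):=\grad_\bfy\Phi(\bfx;\cdot)|_\bfy$ on a neighborhood of $\bfy_0$ chosen small enough that the minimizing geodesic from $\bfx$ to $\bfy$ is unique and stays in the Fermi chart. Exactly as in the proof of Lemma \ref{230411lemmaA_1}, the Gauss Lemma shows that $V_\bfx(\bfy)$ is the unit tangent at $\bfy$ of that minimizing geodesic, so $g(V_\bfx,V_\bfx)\equiv 1$ on the neighborhood. Differentiating this identity covariantly in an arbitrary $Y\in T_{\bfy_0}\mc{M}$ gives $g(\nabla_Y V_\bfx,V_\bfx)|_{\bfy_0}=0$. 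For either of the two admissible $\bfx$, the minimizing geodesic from $\bfx$ to $\bfy_0$ is a sub-arc of the base geodesic $\gamma$, so $V_\bfx(\bfy_0)=\dot\gamma(\epsilon)=\partial_\tau|_{\bfy_0}$. Therefore $g(\nabla_Y V_\bfx,\partial_\tau)|_{\bfy_0}=0$ for both $\bfx=(0,0)$ and $\bfx=\gamma(s)$; subtracting the two identities and using $\grad_\bfy\Phi_0=V_{\gamma(s)}-V_{(0,0)}$ gives $(\hessian\Phi_0)(Y,\partial_\tau)|_{\bfy_0}=0$, which is \eqref{231023e3_54}.

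There is no serious obstacle, and the hardest part is merely the bookkeeping: one must pick $\epsilon<\epsilonm$ small enough to guarantee smoothness of $\Phi$ on the relevant neighborhoods and uniqueness of the minimizing geodesic from $\bfx$ to $\bfy_0$, so that $V_\bfx$ is well-defined and the Gauss Lemma applies. Once this is arranged, no further computation is needed, and the same scheme (vanishing Christoffels on $\gamma$, plus the unit-length constraint on $\grad_\bfy\Phi$) will be the workhorse for the higher-order contact-order computations that follow.
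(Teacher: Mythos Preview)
Your proof is correct but uses a slightly different mechanism for the first identity than the paper does. Both approaches exploit the identity $(\hessian f)(\partial_{y_i},\partial_{y_j})=\partial_{y_i}\partial_{y_j}f-\sum_k\Gamma^k_{ij}\,\partial_k f$, but they kill the correction term differently: you use \eqref{230713e3_7} to make every $\Gamma^k_{ij}(\gamma(\epsilon))$ vanish (a Fermi-coordinate argument), whereas the paper observes via \eqref{231023e3_22} that $\grad_{\bfy}\Phi(\gamma(s);\bfy_0)=\dot\gamma(\epsilon)$ for every $s$, whence $\nabla\Phi_0|_{\bfy_0}=0$ as a one-form, so every $\partial_k\Phi_0$ vanishes at $\bfy_0$ regardless of the Christoffel symbols. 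The paper's route is coordinate-free, and they flag this explicitly in the Remark following the claim, since the identity is invoked again in the proof of Lemma~\ref{230323lemma5_1}; your argument suffices there too because that proof also works in Fermi coordinates, but the paper's argument buys robustness without extra cost. For \eqref{231023e3_54} the paper simply writes ``similarly''; your unit-length argument $g(\nabla_Y V_{\bfx},V_{\bfx})=0$ with $V_{\bfx}(\bfy_0)=\partial_\tau$ is exactly the natural way to flesh that out, and is correct.
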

\begin{proof}[Proof of Claim \ref{230820claim3_1}]
By definition, 
\begin{equation}
\begin{split}
\pnorm{\hessian \Phi_0}\pnorm{
\frac{\partial}{\partial y_i}, \frac{\partial}{\partial y_j}
}
& = 
\pnorm{
\nabla_{
\frac{\partial}{\partial y_i}
} (\nabla \Phi_0)
}(\frac{\partial}{\partial y_j})\\
& =
\nabla_{\frac{\partial}{\partial y_i}} \pnorm{
\nabla_{\frac{\partial}{\partial y_j}} \Phi_0
}
-\nabla \Phi_0\pnorm{
\nabla_{\frac{\partial}{\partial y_i}} 
\frac{\partial}{\partial y_j}
}.
\end{split}
\end{equation}
By \eqref{231023e3_22}, we obtain 
\begin{equation}
\nabla \Phi_0\Big|_{
\substack{
(x, t)=\gamma(s)\\
(y, \tau)=(0, \epsilon)
}
}=0
\end{equation}
for every $s$. This finishes the proof of \eqref{231023e3_53}. The other identity \eqref{231023e3_54} can be proven similarly. 
\end{proof}

\begin{remark}
The proof of Claim \ref{230820claim3_1} does not reply on Fermi coordinate, and Claim \ref{230820claim3_1} holds true for distance functions in general coordinates. This will be used later in Lemma \ref{230323lemma5_1}, in particular, in \eqref{230903e5_33}.  
\end{remark}

Let us write 
\begin{equation}\label{230820e3_18}
\begin{split}
& \pnorm{\hessian \Phi_0}\Big|_{
 \substack{
 (x, t)=\gamma(s), \\
 (y, \tau)=(0, \epsilon)}
 }
\pnorm{
\frac{\partial}{\partial y_i}, \frac{\partial}{\partial y_j}
} \\
& = \hessian \Phi_0\big|_{\gamma(s)}
\pnorm{
E_i(\epsilon), E_j(\epsilon)
}=: W_{ij}(s, \epsilon).
\end{split}
\end{equation}
In the middle term, we have left out the valuation $(y, \tau)=(0, \epsilon)$, as this information can be read from the fact that $E_i(\epsilon)\in T_{\gamma(\epsilon)}\mc{M}$ and $\gamma(\epsilon)=(0, \epsilon)$. Denote 
\begin{equation}
W(s, \epsilon):=
\begin{bmatrix}
W_{11}(s, \epsilon), & W_{12}(s, \epsilon)\\
W_{21}(s, \epsilon), & W_{22}(s, \epsilon)
\end{bmatrix}
\end{equation}
and 
\begin{equation}
\mathfrak{W}(s, \epsilon):=\det W(s, \epsilon).
\end{equation}
 Our goal is now to compute $W_{ij}$ for $1\le i, j\le 2$, given on the right hand side of \eqref{230820e3_18}, and show that the matrix 
\begin{equation}\label{230821e3_19}
\begin{bmatrix}
\partial_{s}\mathfrak{W}(0, \epsilon), & \partial_{s}^2\mathfrak{W}(0, \epsilon), &\partial_{s}^3 \mathfrak{W}(0, \epsilon), & \partial_{s}^4\mathfrak{W}(0, \epsilon)\\
\partial_{s}W_{11}(0, \epsilon), & \partial_{s}^2W_{11}(0, \epsilon), & \partial_{s}^3 W_{11}(0, \epsilon), & \partial_{s}^4 W_{11}(0, \epsilon)\\
\partial_{s}W_{12}(0, \epsilon), & \partial_{s}^2W_{12}(0, \epsilon), & \partial_{s}^3 W_{12}(0, \epsilon), & \partial_{s}^4 W_{12}(0, \epsilon)\\
\partial_{s}W_{22}(0, \epsilon), & \partial_{s}^2W_{22}(0, \epsilon), & \partial_{s}^3 W_{22}(0, \epsilon), & \partial_{s}^4 W_{22}(0, \epsilon)
\end{bmatrix}
\end{equation}
has rank $4$, if $\epsilon>0$ is chosen sufficiently small. \\

Define Jacobi fields $X_j(s, s'), j=1, 2,$ by 
\begin{equation}\label{230820e3_19}
\begin{split}
& \nabla^2_{\dot{\gamma}(s')} X_j(s, s')+R(X_j(s, s'), \dot{\gamma}(s'))\dot{\gamma}(s')=0, \\
& X_j(s, s)=0, \ \ X_j(s, \epsilon)=E_j(\epsilon).
\end{split}
\end{equation}
Here $R$ stands for the Riemannian curvature tensor, and 
\begin{equation}
\nabla_{\dot{\gamma}(s')}^2 Y:= \nabla_{\dot{\gamma}(s')}(\nabla_{\dot{\gamma}(s')} Y)
\end{equation}
for a vector field $Y$. 
 
\begin{claim}\label{230820claim3_3}
For $j=1, 2$ and fixed $s$, it holds that 
\begin{equation}
X_j(s, s')\perp \dot{\gamma}(s')
\end{equation}
for all $s'\in [s, \epsilon]$. 
\end{claim}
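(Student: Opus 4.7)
The plan is to exploit the classical fact that for a Jacobi field $J$ along a geodesic $\gamma$, the inner product $g(J,\dot\gamma)$ is an affine function of the arc-length parameter. Once this is established, the boundary conditions imposed in \eqref{230820e3_19} together with the observation that $E_j(\epsilon)\perp\dot\gamma(\epsilon)$ will force this affine function to be identically zero.

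Concretely, for fixed $s$ and $j\in\{1,2\}$ I would set
\begin{equation}
f(s'):=g\bigl(X_j(s,s'),\dot\gamma(s')\bigr),\qquad s'\in[s,\epsilon].
\end{equation}
Since $\gamma$ is a geodesic, $\nabla_{\dot\gamma}\dot\gamma=0$, so $f'(s')=g(\nabla_{\dot\gamma(s')}X_j(s,s'),\dot\gamma(s'))$. Differentiating once more and using the Jacobi equation from \eqref{230820e3_19}, together with the symmetry $g(R(X,Y)Y,Y)=0$ of the Riemann tensor, gives
\begin{equation}
f''(s')=g(\nabla^2_{\dot\gamma}X_j,\dot\gamma)=-g(R(X_j,\dot\gamma)\dot\gamma,\dot\gamma)=0.
\end{equation}
Therefore $f$ is affine in $s'$.

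It then suffices to exhibit two distinct parameters at which $f$ vanishes. At $s'=s$ we have $X_j(s,s)=0$, so $f(s)=0$. At $s'=\epsilon$ we have $X_j(s,\epsilon)=E_j(\epsilon)$; since in the Fermi coordinates of \eqref{230903e3_19} the vectors $E_1(\epsilon),E_2(\epsilon),E_3(\epsilon)=\dot\gamma(\epsilon)$ form an orthonormal basis of $T_{\gamma(\epsilon)}\mc{M}$ (a consequence of \eqref{230713e3_5}), we obtain $g(E_j(\epsilon),\dot\gamma(\epsilon))=0$ for $j=1,2$, hence $f(\epsilon)=0$. As $s<\epsilon$, the affine function $f$ vanishes at two distinct points and therefore vanishes identically on $[s,\epsilon]$, which is exactly the desired orthogonality.

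There is no real obstacle here: the argument is essentially a one-line Jacobi-field computation, and the only point that requires attention is ensuring that the boundary vector $E_j(\epsilon)$ is actually orthogonal to $\dot\gamma(\epsilon)$, which is immediate from the Fermi-coordinate identities \eqref{230713e3_5}--\eqref{230713e3_7}. The degenerate case $s=\epsilon$ is vacuous since then the interval $[s,\epsilon]$ reduces to a point.
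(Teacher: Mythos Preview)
Your proposal is correct and follows essentially the same approach as the paper: both arguments show that $f(s')=g(X_j(s,s'),\dot\gamma(s'))$ has vanishing second derivative (via the Jacobi equation and the curvature symmetry $g(R(X,\dot\gamma)\dot\gamma,\dot\gamma)=0$), and then use the two boundary conditions $X_j(s,s)=0$ and $X_j(s,\epsilon)=E_j(\epsilon)\perp\dot\gamma(\epsilon)$ to conclude that the affine function $f$ is identically zero. Your write-up is in fact slightly more explicit than the paper's about why $E_j(\epsilon)\perp\dot\gamma(\epsilon)$ holds.
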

\begin{proof}[Proof of Claim \ref{230820claim3_3}]
We need to prove that 
\begin{equation}
g\pnorm{
X_j(s, s'), \dot{\gamma}(s')
}=0, \ \ \forall s'\in [s, \epsilon].
\end{equation}
Note that by the initial conditions in \eqref{230820e3_19}, we have 
\begin{equation}
g\pnorm{
X_j(s, s), \dot{\gamma}(s)
}=0, \ \ 
g\pnorm{
X_j(s, \epsilon), \dot{\gamma}(\epsilon)
}=0.
\end{equation}
By the chain rule, we have 
\begin{equation}\label{230831e3_55}
\begin{split}
& \frac{\partial}{\partial s'} g\pnorm{
X_j(s, s'), \dot{\gamma}(s')
}\\
& = 
g\pnorm{\nabla_{\dot{\gamma}(s')}
X_j(s, s'), \dot{\gamma}(s')
}+ g\pnorm{
X_j(s, s'), \nabla_{\dot{\gamma}(s')}\dot{\gamma}(s')
}.
\end{split}
\end{equation}
As $\gamma$ is a geodesic, we have 
\begin{equation}
\nabla_{\dot{\gamma}(s')}\dot{\gamma}(s')=0.
\end{equation}
In order to use the information from Jacobi fields in \eqref{230820e3_19}, we take another derivative on both sides of \eqref{230831e3_55}, and obtain 
\begin{equation}
\begin{split}
\frac{\partial^2}{\partial (s')^2} g\pnorm{
X_j(s, s'), \dot{\gamma}(s')
}&
= 
g\pnorm{\nabla^2_{\dot{\gamma}(s')}
X_j(s, s'), \dot{\gamma}(s')
}\\
& =
-g\pnorm{
R(X_j(s, s'), \dot{\gamma}(s'))\dot{\gamma}(s'), \dot{\gamma}(s')
}=0.
\end{split}
\end{equation}
In the last step, we used basic symmetries of the Riemannian curvature tensors. The claim follows from the initial conditions in the Jacobi equation. 
\end{proof}

To proceed, we need Remark 4.11 in Sakai \cite[page 110]{Sak96}. 
\begin{lemma}[Sakai \cite{Sak96}]\label{230902lemma3_5gg}
Take two distinct points $(x, t), (y, \tau)\in \mc{M}$ and let $d_0$ be their distance. Let $\gamma_0:[0, d_0]\to \mc{M}$ be the arc-length parametrized geodesic connecting these two points.  Take two vectors $V_1, V_2\in T_{(y, \tau)}\mc{M}$. Let $Z_j(s)$ be the Jacobi field along $\gamma_0$ satisfying the boundary conditions 
\begin{equation}
Z_j(0)=0, \ \ Z_j(d_0)= V_j, \ \ j=1, 2. 
\end{equation}
Moreover, let 
\begin{equation}
Z_j^{\perp}(s):= 
Z_j(s)-
g(
Z_j(s), \dot{\gamma_0}(s)
)
\dot{\gamma_0}(s), \ \ j=1, 2.
\end{equation}
Then 
\begin{equation}
\pnorm{
\hessian \Phi((x, t), (y, \tau))
}
\pnorm{
V_1, V_2
}
=
g\pnorm{
\nabla_{\dot{\gamma_0}(d_0)} Z^{\perp}_1(d_0), 
Z_2^{\perp}(d_0)
}.
\end{equation}
Here $\hessian$ is the covariant Hessian in the $(y, \tau)$ variables. 
\end{lemma}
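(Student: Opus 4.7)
The strategy is to realize $\Phi((x,t),\cdot)$ locally as an arc length functional on a two-parameter variation of geodesics through $\gamma_0$ and invoke the standard second-variation-of-arc-length formula, in which Jacobi fields appear naturally as variation fields of geodesic variations.

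First I would construct the variation. Since $d_0$ is small, $(y,\tau)$ is not conjugate to $(x,t)$ along $\gamma_0$, so the Jacobi fields $Z_1, Z_2$ satisfying $Z_j(0)=0,\ Z_j(d_0)=V_j$ are uniquely determined. Define the surface $\sigma(u_1,u_2) := \exp_{(y,\tau)}(u_1 V_1 + u_2 V_2)$, and let $H(s;u_1,u_2)$ be the two-parameter family of curves in which $H(\cdot;u_1,u_2)$ is a constant-speed reparametrization on $[0, d_0]$ of the unique minimizing geodesic from $(x,t)$ to $\sigma(u_1,u_2)$, with $H(\cdot;0,0) = \gamma_0$. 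Because each curve is (up to affine reparametrization) a geodesic, the variation fields $J_i(s) := \partial_{u_i} H(s;0,0)$ are Jacobi fields along $\gamma_0$ satisfying $J_i(0) = 0$ (fixed left endpoint) and $J_i(d_0) = V_i$ (from the definition of $\sigma$); by uniqueness $J_i \equiv Z_i$.

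Next I would relate both sides of the claimed identity to $\partial_{u_1}\partial_{u_2} L(0,0)$, where $L(u_1, u_2) := \Phi((x,t), \sigma(u_1, u_2))$. On the one hand, the choice of $\sigma$ via the exponential map at $(y,\tau)$ makes $(u_1, u_2)$ part of a normal coordinate system, so $\nabla_{\partial_{u_1}} \partial_{u_2} \sigma|_{(0,0)} = 0$ and therefore $\partial_{u_1}\partial_{u_2} L(0,0) = (\hessian\,\Phi)(V_1, V_2)$. On the other hand, $L(u_1, u_2)$ is the arc length of the geodesic $H(\cdot; u_1, u_2)$, so the standard second-variation-of-arc-length formula for two-parameter geodesic variations gives
\begin{equation*}
\partial_{u_1}\partial_{u_2} L(0,0) = \int_0^{d_0} \bigl[g(\nabla_{\dot\gamma_0} J_1^\perp, \nabla_{\dot\gamma_0} J_2^\perp) - g(R(J_1^\perp, \dot\gamma_0)\dot\gamma_0, J_2^\perp)\bigr]\,ds + (\mathrm{b.t.}),
\end{equation*}
and both endpoint boundary terms vanish: at $s=0$ because $H(0;u_1,u_2) \equiv (x,t)$ is constant, and at $s=d_0$ because $\nabla_{\partial_{u_1}} \partial_{u_2} \sigma|_{(0,0)} = 0$.

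Finally I would integrate by parts. Since $R(\dot\gamma_0, \dot\gamma_0)\dot\gamma_0 = 0$ forces the tangential component of any Jacobi field to be affine in $s$, the perpendicular part $J_i^\perp$ is itself a Jacobi field; in particular $\nabla^2_{\dot\gamma_0} J_i^\perp = -R(J_i^\perp, \dot\gamma_0)\dot\gamma_0$. Integration by parts on $[0, d_0]$, together with $J_i^\perp(0) = 0$, then yields
\begin{equation*}
\int_0^{d_0} g(\nabla_{\dot\gamma_0} J_1^\perp, \nabla_{\dot\gamma_0} J_2^\perp)\,ds = g(\nabla_{\dot\gamma_0(d_0)} J_1^\perp(d_0), J_2^\perp(d_0)) + \int_0^{d_0} g(R(J_1^\perp, \dot\gamma_0)\dot\gamma_0, J_2^\perp)\,ds,
\end{equation*}
after which the two curvature integrals cancel and we arrive at the desired identity. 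The only real difficulty is careful bookkeeping: one must verify that both endpoint boundary contributions in the second variation vanish (using the fixed left endpoint together with the normal-coordinates trick at $(y,\tau)$) and that orthogonal projection preserves the Jacobi field property.
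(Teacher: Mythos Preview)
The paper does not prove this lemma; it is simply quoted from Sakai's textbook \cite[Remark~4.11, p.~110]{Sak96} and used as a black box. Your proposal is a correct proof and is the standard argument for this fact: realize $\Phi((x,t),\cdot)$ as the length functional on a two-parameter geodesic variation whose variation fields are the prescribed Jacobi fields, apply the second variation formula for arc length, and collapse the index-form integral via integration by parts against the Jacobi equation for $Z_j^{\perp}$. The bookkeeping points you flag (vanishing of both boundary terms, and that $Z_j^{\perp}$ is again a Jacobi field because the tangential component of a Jacobi field along a unit-speed geodesic is affine) are exactly the ones that need checking, and your handling of them is correct.
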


By Claim \ref{230820claim3_3} and Lemma \ref{230902lemma3_5gg}, we obtain 
\begin{equation}\label{230820e3_22}
\hessian \Phi\big|_{\gamma(s)}
\pnorm{
E_i(\epsilon), E_j(\epsilon)
}
=
g\pnorm{
\nabla_{\dot{\gamma}(s')}X_i(s, s'), X_j(s, s')
}\Big|_{s'=\epsilon}.
\end{equation}
Note that on the left hand side of \eqref{230820e3_22}, we have $\Phi$ but not $\Phi_0$. We introduce a matrix 
\begin{equation}
A(s, s')=
\begin{bmatrix}
a_{11}(s, s'), & a_{12}(s, s')\\
a_{21}(s, s'), & a_{22}(s, s')
\end{bmatrix}
\end{equation}
by writing 
\begin{equation}
X_j(s, s')= a_{j1}(s, s') E_1(s')+ a_{j2}(s, s')E_2(s'), \ \ j=1, 2.
\end{equation}
Let us rewrite \eqref{230820e3_19} using the new notation. The first equation in \eqref{230820e3_19} becomes 
\begin{multline}
 \partial_{s'}^2
a_{j1}(s, s') E_1(s')+
\partial_{s'}^2 
a_{j2}(s, s') E_2(s') \\
+
R(
a_{j1}(s, s') E_1(s')+ a_{j2}(s, s')E_2(s'), \dot{\gamma}(s')
)\dot{\gamma}(s')=0,
\end{multline}
which is equivalent to the following two equations
\begin{multline}
\partial_{s'}^2
a_{j1}(s, s')
+ 
a_{j1}(s, s') 
g\pnorm{
R(E_1(s'), \dot{\gamma}(s'))\dot{\gamma}(s'),
E_1(s')
}\\
 +
a_{j2}(s, s') 
g\pnorm{
R(E_2(s'), \dot{\gamma}(s'))\dot{\gamma}(s'),
E_1(s')
}=0,
\end{multline}
and 
\begin{multline}
\partial_{s'}^2
a_{j2}(s, s')
+ 
a_{j1}(s, s') 
g\pnorm{
R(E_1(s'), \dot{\gamma}(s'))\dot{\gamma}(s'),
E_2(s')
}\\
+
a_{j2}(s, s') 
g\pnorm{
R(E_2(s'), \dot{\gamma}(s'))\dot{\gamma}(s'),
E_2(s')
}=0.
\end{multline}
Denote 
\begin{equation}
R(s'):=
\begin{bmatrix}
R_{1331}(s'), & R_{1332}(s')\\
R_{2331}(s'), & R_{2332}(s')
\end{bmatrix}
\end{equation}
where
\begin{equation}
R_{ijkl}(s'):=g(
R(E_i(s'), E_j(s'))E_k(s'), E_l(s')
).
\end{equation}
Then \eqref{230820e3_19} can be written as 
\begin{equation}\label{230822e3_34hh}
\begin{split}
& \partial_{s'}^2 A(s, s')+ A(s, s') R(s')=0, \\
& A(s, s)=0, \ A(s, \epsilon)=I_{2\times 2}.
\end{split}
\end{equation}
Here $0$ stands for the zero matrix of order $2\times 2$, $I_{2\times 2}$ is the $2\times 2$ identity matrix. Moreover, 
\begin{equation}\label{230821e3_29}
\begin{split}
\eqref{230820e3_22}&=
g\pnorm{
\nabla_{\dot{\gamma}(s')}(
a_{i1}(s, s')E_1(s')+ a_{i2}(s, s')E_2(s'), 
a_{j1}(s, s')E_1(s')+ a_{j2}(s, s')E_2(s')
)
}\Big|_{s'=\epsilon}\\
&=
\partial_{s'} a_{i1}(s, s')a_{j1}(s, s')
+ 
\partial_{s'} a_{i2}(s, s')a_{j2}(s, s')\big|_{s'=\epsilon}= 
\partial_{s'} a_{ij}\big|_{s'=\epsilon},
\end{split}
\end{equation}
where in the last step we used the initial condition at $s'=\epsilon$.\\

Recall that our goal was to compute \eqref{230820e3_18} and prove \eqref{230821e3_19}. By \eqref{230821e3_29}, we obtain 
\begin{equation}\label{230821e3_30}
W_{ij}(s, \epsilon)=\hessian \Phi_0\big|_{\gamma(s)}
\pnorm{
E_i(\epsilon), E_j(\epsilon)
}=
\partial_{s'}a_{ij}(s, \epsilon)-\partial_{s'}a_{ij}(0, \epsilon).
\end{equation}
When computing \eqref{230820e3_18} and proving  \eqref{230821e3_19}, we will consider the Taylor expansion of \eqref{230821e3_30} at $s=0$:
\begin{equation}\label{230822e3_39hh}
W(s, \epsilon)=\frac{s}{1!} \partial_s \partial_{s'} A(0, \epsilon)+\frac{s^2}{2!} \partial^2_s \partial_{s'} A(0, \epsilon)+\dots
\end{equation}
In the following computation, it will be convenient to introduce some notation. We will write 
\begin{equation}
R'(s'):=\frac{\partial}{\partial s'} R(s'),
\end{equation}
where partial derivatives are taken component-wise.  Moreover, we will write 
\begin{equation}
R_0:=R(0), \ \ R'_0:=R'(0).
\end{equation}
\begin{claim}\label{230822claim3_5}
We have 
\begin{equation}
\begin{split}
& \partial_s\partial_{s'}A(0, \epsilon)= 
\frac{1}{\epsilon^2}\pnorm{
I_{2\times 2}+\frac{R_0}{3}\epsilon^2+ \frac{R'_0}{6} \epsilon^3+ O(\epsilon^4)
}\\
& \partial^2_s\partial_{s'}A(0, \epsilon)= 
\frac{2!}{\epsilon^3}\pnorm{
I_{2\times 2}+ \frac{R'_0}{12}\epsilon^3+O(\epsilon^4)
}\\
& \partial^3_s\partial_{s'}A(0, \epsilon)= 
\frac{3!}{\epsilon^4}\pnorm{
I_{2\times 2}+ O(\epsilon^4)
}\\
& \partial^4_s\partial_{s'}A(0, \epsilon)= 
\frac{4!}{\epsilon^5}\pnorm{
I_{2\times 2}+ O(\epsilon^4)
}
\end{split}
\end{equation}
The implicit constants in $O(\epsilon^4)$ depend only on the manifold. 
\end{claim}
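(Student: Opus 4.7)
The plan is to Taylor-expand $A(s, s')$ in $u := s' - s$ around $u = 0$, convert the Jacobi ODE $\partial_{s'}^2 A + A R = 0$ into a recursion for the Taylor coefficients, use the boundary condition $A(s, \epsilon) = I$ to pin down the one remaining free matrix, and then differentiate the resulting closed form in $s$.

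Set $A_k(s) := \partial_{s'}^k A(s, s')|_{s' = s}$ and $C(s) := A_1(s)$. The initial condition $A(s, s) = 0$ gives $A_0 = 0$, so the Leibniz rule applied to $\partial_{s'}^2 A = -AR$ produces
\begin{equation*}
A_{k+2}(s) = -\sum_{j=0}^{k} \binom{k}{j}\, A_j(s)\, R^{(k-j)}(s).
\end{equation*}
Since $A_0 = 0$, every $A_k$ with $k \geq 1$ factors with $C(s)$ on the left; in particular $A_2 = 0$, $A_3 = -CR$, and $A_4 = -2CR'$. Writing $\sigma := \epsilon - s$, the terminal condition $A(s, \epsilon) = I$ becomes
\begin{equation*}
I = C(s)\,\bigl[\sigma I - \tfrac{\sigma^3}{6} R(s) - \tfrac{\sigma^4}{12} R'(s) + O(\sigma^5)\bigr],
\end{equation*}
which a Neumann series inverts to $C(s) = \sigma^{-1}\bigl[I + \tfrac{\sigma^2}{6} R(s) + \tfrac{\sigma^3}{12} R'(s) + O(\sigma^4)\bigr]$. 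Substituting into the identity $\partial_{s'} A(s, \epsilon) = C(s)\bigl[I - \tfrac{\sigma^2}{2} R - \tfrac{\sigma^3}{3} R' + O(\sigma^4)\bigr]$ produces the closed form
\begin{equation*}
\partial_{s'} A(s, \epsilon) \;=\; \sigma^{-1}\,\tilde F(s, \sigma), \qquad \tilde F(s, \sigma) := I - \tfrac{\sigma^2}{3} R(s) - \tfrac{\sigma^3}{4} R'(s) + O(\sigma^4).
\end{equation*}

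From here I differentiate in $s$, using $\partial_s \sigma = -1$, $\partial_s^m \sigma^{-1} = m!\,\sigma^{-(m+1)}$, and the Leibniz identity
\begin{equation*}
\partial_s^k\bigl(\sigma^{-1}\tilde F\bigr)\bigr|_{s=0} \;=\; \sum_{j=0}^k \frac{k!}{j!}\, \epsilon^{-(k+1-j)}\, \partial_s^j \tilde F(0, \epsilon).
\end{equation*}
Setting $R \equiv 0$ collapses the sum to $k!\,\epsilon^{-(k+1)} I$, which is the overall prefactor appearing in every row of the claim. The curvature corrections then arise as finitely many explicit terms in $R(0), R'(0), R''(0), \ldots$ with combinatorial coefficients, and I expect the bookkeeping to recover all four rows, with the $R_0$ contribution cancelling entirely in the $k = 2, 3, 4$ rows and the $R'_0$ contribution cancelling in the $k = 3, 4$ rows.

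The main obstacle is verifying these cancellations. For $k = 4$ the prefactor $24\,\sigma^{-5}$ multiplied by the $\sigma^2 R(s)$ and $\sigma^3 R'(s)$ terms of $\tilde F$ naively contributes at sizes $\epsilon^{-3}$ and $\epsilon^{-2}$, well above the $O(\epsilon^{-1})$ error tolerated by $\tfrac{4!}{\epsilon^5}(I + O(\epsilon^4))$, so these apparent contributions must cancel against the corresponding terms arising from $\partial_s^j \tilde F$ with $j = 1, 2, 3$. The cancellation is forced by the combinatorial structure of the Leibniz expansion once combined with the specific coefficients in $\tilde F$, but it must be carried out explicitly. A convenient reduction: the $O(\sigma^4)$ remainder in $\tilde F$, being the tail of a Taylor series in $\sigma$ with smooth-in-$s$ coefficients, contributes at most $O(\epsilon^{-1})$ to $\partial_s^4(\sigma^{-1}\tilde F)|_{s=0}$, so only the explicit $\sigma^2$ and $\sigma^3$ terms above need enter the final verification.
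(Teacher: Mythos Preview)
Your approach is correct but genuinely different from the paper's. The paper introduces two auxiliary fundamental solutions $B_1, B_2$ of $B'' + BR = 0$ with initial data at $s'=0$ (namely $B_1(0)=I,\,B_1'(0)=0$ and $B_2(0)=0,\,B_2'(0)=I$), writes $A(s,s') = C_1(s)B_1(s') + C_2(s)B_2(s')$, and then extracts $C_1^{(j)}(0),\,C_2^{(j)}(0)$ by repeatedly differentiating the two boundary conditions $A(s,s)=0$ and $A(s,\epsilon)=I$; each $\partial_s^k\partial_{s'}A(0,\epsilon)$ is then assembled from these pieces. Your route bypasses the auxiliary systems entirely: you Taylor-expand $A$ in $s'-s$ about the moving base point, so the ODE directly produces the recursion for the coefficients and the single free matrix $C(s)=\partial_{s'}A(s,s)$ is pinned down by the terminal condition. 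This yields the closed form $\partial_{s'}A(s,\epsilon)=\sigma^{-1}\tilde F(s,\sigma)$ and reduces everything to a Leibniz expansion in $s$. Your method is more elementary and makes the structure of the answer (leading $k!\,\epsilon^{-(k+1)}I$ plus curvature corrections) transparent; the paper's method keeps the $s$- and $s'$-dependence factored, which is slightly more modular but requires tracking two matrix-valued functions $C_1,C_2$ and their derivatives. Both leave the highest-order cases to the reader: the paper omits the $k=4$ computation, and you defer the explicit $k=3,4$ cancellation checks. Those cancellations do go through by exactly the mechanism you describe (I verified $k=1,2$ from your $\tilde F$ and the $R_0$ term indeed drops out at $k=2$), so there is no gap in the strategy, only in the execution.
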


The proof of Claim \ref{230822claim3_5} involves heavy calculations, and will therefore be postponed to the end of this subsection. 

We stop the Taylor expansion in \eqref{230822e3_39hh} at the fifth order, and write 
\begin{equation}\label{230823e3_65}
\begin{split}
W(s, \epsilon)& = 
\frac{s}{\epsilon^2}\pnorm{
I_{2\times 2}+\frac{R_0}{3}\epsilon^2+ \frac{R'_0}{6} \epsilon^3+ O(\epsilon^4)
}+ \frac{s^2}{\epsilon^3}\pnorm{
I_{2\times 2} + \frac{R'_0}{12}\epsilon^3+O(\epsilon^4)
}\\
& + 
\frac{s^3}{\epsilon^4}\pnorm{
I_{2\times 2}+ O(\epsilon^4)
}
+ 
\frac{s^4}{\epsilon^5}\pnorm{
I_{2\times 2}+ O(\epsilon^4)
}
+O_{\epsilon}(s^5).
\end{split}
\end{equation}
Here $O_{\epsilon}(s^5)$ means that the implicit constant there is allowed to depend on $\epsilon$. This dependence is harmless because when computing contact orders (in the $s$ variable), we will always fix $\epsilon$ and consider $s\to 0$. Recall that we need to prove \eqref{230821e3_19}. To simplify notation, we will make the change of variables $s\mapsto \epsilon s$, and consider the matrix 
\begin{equation}
\begin{split}
\epsilon W(\epsilon s, \epsilon)=& 
s\pnorm{
I_{2\times 2}+\frac{R_0}{3}\epsilon^2+ \frac{R'_0}{6} \epsilon^3+ O(\epsilon^4)
}+ s^2\pnorm{
I_{2\times 2}+ \frac{R'_0}{12}\epsilon^3+O(\epsilon^4)
}\\
& + 
s^3\pnorm{
I_{2\times 2}+ O(\epsilon^4)
}
+ 
s^4\pnorm{
I_{2\times 2}+ O(\epsilon^4)
}
+O_{\epsilon}(s^5).
\end{split}
\end{equation} 
%
By Claim \ref{230822claim3_4}, 
\begin{equation}
R(s)=
-\begin{bmatrix}
\frac{g_{33, 11}(s)}{2}, & \frac{g_{33, 12}(s)}{2}\\
\frac{g_{33, 21}(s)}{2}, & \frac{g_{33, 22}(s)}{2}
\end{bmatrix}
\end{equation}
and therefore 
\begin{equation}
R'(s)=
-\begin{bmatrix}
\frac{g_{33, 113}(s)}{2}, & \frac{g_{33, 123}(s)}{2}\\
\frac{g_{33, 213}(s)}{2}, & \frac{g_{33, 223}(s)}{2}
\end{bmatrix}
\end{equation}
where similarly to \eqref{230823e3_44}, we use the notation 
\begin{equation}
g_{ij, klm}(s):= g_{ij, klm}(\gamma(s)).
\end{equation}
Moreover, we denote 
\begin{equation}
\mfg_{ij, kl}:= g_{ij, kl}(0), \ \ \mfg_{ij, klm}:=g_{ij, klm}(0),
\end{equation}
which will significantly simplify the notation. Write 
\begin{equation}
\epsilon W(\epsilon s, \epsilon)=
\begin{bmatrix}
(\star)_{11}, & (\star)_{12}\\
(\star)_{21}, & (\star)_{22}
\end{bmatrix}
+O_{\epsilon}(s^5),
\end{equation}
where 
\begin{multline}
(\star)_{11}:= s(
1-
\frac16 \mfg_{33, 11}\epsilon^2- \frac{1}{12} \mfg_{33, 113}\epsilon^3+ O(\epsilon^4)
)\\
+ 
s^2(
1-
\frac{1}{24} \mfg_{33, 113}\epsilon^3+ O(\epsilon^4)
)
+ s^3(1+ O(\epsilon^4))
+ s^4(1+ O(\epsilon^4)),
\end{multline}
\begin{multline}
(\star)_{22}:= s(
1-
\frac16 \mfg_{33, 22}\epsilon^2- \frac{1}{12} \mfg_{33, 223}\epsilon^3+ O(\epsilon^4)
)\\
+
s^2(
1-
 \frac{1}{24} \mfg_{33, 223}\epsilon^3+ O(\epsilon^4)
)
+ s^3(1+ O(\epsilon^4))
+ s^4(1+ O(\epsilon^4)),
\end{multline}
and 
\begin{multline}
(\star)_{12}=(\star)_{21}:= 
s(
-\frac16 \mfg_{33, 12}\epsilon^2- \frac{1}{12} \mfg_{33, 123}\epsilon^3+ O(\epsilon^4)
)\\
+ 
s^2(
-\frac{1}{24} \mfg_{33, 123}\epsilon^3+ O(\epsilon^4)
)
+ s^3  O(\epsilon^4)
+ s^4 O(\epsilon^4)
\end{multline}
We compute $\det(\epsilon W(\epsilon s, \epsilon))$ and obtain 
\begin{equation}
\begin{split}
(\star)_{11}(\star)_{22}-(\star)_{12}(\star)_{21}= 
 (\star)_1 s^2+ 
(\star)_2 s^3 + 
(\star)_3 s^4 + O_{\epsilon}(s^5),
\end{split}
\end{equation}
where 
\begin{equation}
\begin{split}
& (\star)_1:=1-\frac16 \epsilon^2(\mfg_{33, 11}+\mfg_{33, 22})
-\frac{1}{12}\epsilon^3 (\mfg_{33, 113}+\mfg_{33, 223})+O(\epsilon^4),\\
& (\star)_2:=2-\frac16 \epsilon^2(\mfg_{33, 11}+\mfg_{33, 22})
-\frac{1}{8}\epsilon^2 (\mfg_{33, 113}+\mfg_{33, 223})+O(\epsilon^4),\\
& (\star)_3:=3-\frac16 \epsilon^2(\mfg_{33, 11}+\mfg_{33, 22})
-\frac{1}{8}\epsilon^2 (\mfg_{33, 113}+\mfg_{33, 223})+O(\epsilon^4).
\end{split}
\end{equation}
To prove \eqref{230821e3_19}, it suffices to show that the matrix
\begin{equation}
\begin{bmatrix}
(\star\star)_{11}, & 1-
\frac{1}{24} \mfg_{33, 113}\epsilon^3+ O(\epsilon^4), & 1+O(\epsilon^4), & 1+O(\epsilon^4)\\
(\star\star)_{21}, & 1-
\frac{1}{24} \mfg_{33, 223}\epsilon^3+ O(\epsilon^4), & 1+O(\epsilon^4), & 1+O(\epsilon^4)\\
(\star\star)_{31}, & -\frac{1}{24} \mfg_{33, 123}\epsilon^3+ O(\epsilon^4), & O(\epsilon^4), & O(\epsilon^4)\\
0, & (\star)_1, & (\star)_2, & (\star)_3
\end{bmatrix}
\end{equation}
with 
\begin{equation}
\begin{split}
& (\star\star)_{11}:=1-
\frac16 \mfg_{33, 11}\epsilon^2- \frac{1}{12} \mfg_{33, 113}\epsilon^3+ O(\epsilon^4),\\
& (\star\star)_{21}:=1-
\frac16 \mfg_{33, 22}\epsilon^2- \frac{1}{12} \mfg_{33, 223}\epsilon^3+ O(\epsilon^4),\\
& (\star\star)_{31}:=-\frac16 \mfg_{33, 12}\epsilon^2- \frac{1}{12} \mfg_{33, 123}\epsilon^3+ O(\epsilon^4),
\end{split}
\end{equation}
is non-degenerate whenever $\epsilon>0$ is picked to be small enough. We subtract the second row from the first row, and obtain 
\begin{equation}
\begin{bmatrix}
(\star\star\star)_{11}, &
-\frac{1}{24} (\mfg_{33, 113}-\mfg_{33, 223})\epsilon^3+ O(\epsilon^4), & O(\epsilon^4), & O(\epsilon^4)\\
(\star\star\star)_{21}, & 1-
\frac{1}{24} \mfg_{33, 223}\epsilon^3+ O(\epsilon^4), & 1+O(\epsilon^4), & 1+O(\epsilon^4)\\
(\star\star\star)_{31}, & -\frac{1}{24} \mfg_{33, 123}\epsilon^3+ O(\epsilon^4), & O(\epsilon^4), & O(\epsilon^4)\\
0, & (\star)_1, & (\star)_2, & (\star)_3
\end{bmatrix}
\end{equation}
where 
\begin{equation}
\begin{split}
& (\star\star\star)_{11}:= -\frac16 (\mfg_{33, 11}-\mfg_{33, 22})\epsilon^2- \frac{1}{12} (\mfg_{33, 113}-\mfg_{33, 223})\epsilon^3+ O(\epsilon^4),\\
& (\star\star\star)_{21}:=1-
\frac16 \mfg_{33, 22}\epsilon^2- \frac{1}{12} \mfg_{33, 223}\epsilon^3+ O(\epsilon^4), \\
& (\star\star\star)_{31}:=-\frac16 \mfg_{33, 12}\epsilon^2- \frac{1}{12} \mfg_{33, 123}\epsilon^3+ O(\epsilon^4).
\end{split}
\end{equation}
It is not difficult to see that the determinant is $O(\epsilon^5)$. Let us compute the coefficient of $\epsilon^5$, and we obtain 
\begin{equation}\label{230823e3_77}
-\frac{1}{6* 24}\pnorm{
(\mfg_{33, 11}-\mfg_{33, 22})\mfg_{33, 123}-
(\mfg_{33, 113}-\mfg_{33, 223})\mfg_{33, 12}
}
\end{equation}
Recall from \eqref{230822e3_61} that Sogge's chaotic curvature implies that 
\begin{equation}\label{230822e3_61mmm}
\begin{split}
& |(c_2^2-c_1^2)\mfg_{33, 21}
+c_1 c_2 (\mfg_{33, 11}-\mfg_{33, 22})
|\\
&
+
|(c_2^2-c_1^2)\mfg_{33, 213}
+c_1 c_2 (\mfg_{33, 113}-\mfg_{33, 223})
|\neq 0
\end{split}
\end{equation}
for all $c_1^2+c_2^2=1$. Let us write \eqref{230822e3_61mmm} slightly differently, but equivalently,  as 
\begin{equation}\label{230822e3_61pp}
\begin{split}
& |c_1 \mfg_{33, 21}
+c_2 (\mfg_{33, 11}-\mfg_{33, 22})
|\\
&
+
|c_1 \mfg_{33, 213}
+c_2 (\mfg_{33, 113}-\mfg_{33, 223})
|\neq 0
\end{split}
\end{equation}
for all $c_1^2+c_2^2=1$. To see that $\eqref{230823e3_77}\neq 0$, we just pick 
\begin{equation}
c_1= c(\mfg_{33, 113}-\mfg_{33, 223}), \ c_2=-c(\mfg_{33, 123}),
\end{equation}
in \eqref{230822e3_61pp}, for some appropriately chosen constant $c$; the only thing we need to make sure is that $\mfg_{33, 113}-\mfg_{33, 223}$ and $\mfg_{33, 123}$ do not vanish simultaneously. However, it is elementary to see that if they do vanish  simultaneously, then \eqref{230822e3_61pp} can never hold. This finishes the proof that $\eqref{230823e3_77}\neq 0$, thus the item (a) of Theorem \ref{230711thm2_5}, modulo the proof of Claim \ref{230822claim3_5}.\\

\begin{proof}[Proof of Claim \ref{230822claim3_5}]

To compute \eqref{230822e3_39hh}, let us first recall how the matrix $A$ is defined in \eqref{230822e3_34hh}. To study this linear systems of equations, we will introduce two auxiliary linear systems of equations: 
\begin{equation}\label{230823e3_81}
\begin{split}
& B''_1(s')+ B_1(s')R(s')=0_{2\times 2},\\
& B_1(0)=I_{2\times 2}, \ \ B'_1(0)=0_{2\times 2},
\end{split}
\end{equation}
and 
\begin{equation}\label{230823e3_82}
\begin{split}
& B''_2(s')+ B_2(s')R(s')=0_{2\times 2},\\
& B_2(0)=0_{2\times 2}, \ \ B'_2(0)=I_{2\times 2},
\end{split}
\end{equation}
where $B_1(s')$ and $B_2(s')$ are two $2\times 2$ matrices with entries being functions depending on $s'$. As all these systems of equations are linear, we know we can find smooth matrix-valued functions $C_1(s)$ and $C_2(s)$ such that 
\begin{equation}\label{230823e3_83}
A(s, s')= C_1(s)B_1(s')+ C_2(s) B_2(s').
\end{equation}
Before computing partial derivatives of $A$, let us collect some useful data. By taking derivatives on \eqref{230823e3_81} and \eqref{230823e3_82}, we obtain 
\begin{equation}
\begin{split}
& 
B_1(0)=I, \ B'_1(0)=0, \ B''_1(0)=-R_0, \ B'''_1(0)=-R'_0;\\
& B_2(0)=0, \ B'_2(0)=I, \ B''_2(0)=0, \ B'''_2(0)=-R_0, \ B^{(4)}_2(0)=-2R'_0,
\end{split}
\end{equation}
where we abbreviate $I_{2\times 2}$ to $I$. By Taylor's expansion, we have 
\begin{equation}
\begin{split}
& B_1(\epsilon)=I-\frac{R_0}{2!} \epsilon^2-\frac{R'_0}{3!}\epsilon^3+O(\epsilon^4),\\
& B_2(\epsilon)=\epsilon
\pnorm{
I-\frac{R_0}{3!} \epsilon^2-2\frac{R'_0}{4!}\epsilon^3+O(\epsilon^4)
}.
\end{split}
\end{equation}
By taking inverses, we obtain 
\begin{equation}
B_2^{-1}(\epsilon)= 
\frac{1}{\epsilon}
\pnorm{
I+
\frac{R_0}{3!}\epsilon^2+
\frac{2R'_0}{4!}\epsilon^3+O(\epsilon^4)
}.
\end{equation}
Direct computation shows that 
\begin{equation}\label{230831e3_95}
\begin{split}
B_2^{-1}(\epsilon) B_1(\epsilon)
&
=
\frac{1}{\epsilon}
\pnorm{
I+
\frac{R_0}{3!}\epsilon^2+
\frac{2R'_0}{4!}\epsilon^3+O(\epsilon^4)
}
\pnorm{
I-\frac{R_0}{2!} \epsilon^2-\frac{R'_0}{3!}\epsilon^3+O(\epsilon^4)
}\\
&=
\frac{1}{\epsilon}
\pnorm{
I
-
\frac{R_0}{3} \epsilon^2
-
\frac{R'_0}{12} \epsilon^3+ O(\epsilon^4)
}.
\end{split}
\end{equation}
Recall the initial condition in \eqref{230822e3_34hh}, that is, the condition that 
\begin{equation}\label{230823e3_87}
A(s, s)=0, \ A(s, \epsilon)=I_{2\times 2}.
\end{equation}
By taking derivatives in $s$ in \eqref{230823e3_87}, we obtain that 
\begin{equation}\label{230823e3_88}
\begin{split}
& \partial_s A(s, s)+\partial_{s'}A(s, s)=0, \ \partial_s A(s, \epsilon)=0,\\
& \partial^2_s A(s, s)+
2\partial_s\partial_{s'}A(s, s)+
\partial_{s'}^2A(s, s)=0, \ \partial^2_s A(s, \epsilon)=0,\\
&\partial^3_s A(s, s)+
 3\partial^2_s\partial_{s'}A(s, s)+
  3\partial_s\partial^2_{s'}A(s, s)+
\partial_{s'}^3A(s, s)=0, \ \partial^3_s A(s, \epsilon)=0.
\end{split}
\end{equation}
Taking $s=0$ in \eqref{230823e3_88}, we obtain 
\begin{equation}\label{230831e3_97}
\begin{split}
& C_1(0)=0, \ \ C_2(0)=B_2^{-1}(\epsilon),\\
& C'_1(0)=-B_2^{-1}(\epsilon), \ \ C'_2(0)=B_2^{-1}(\epsilon)B_1(\epsilon)B_2^{-1}(\epsilon),  \\
& C''_1(0)=-2 C'_2(0), \ \ C''_2(0)=-C''_1(0) B_1(\epsilon)B_2^{-1}(\epsilon),\\
& C'''_1(0)=-3 C''_2(0)-2 B_2^{-1}(\epsilon)R_0, \ \ C'''_2(0)=-C'''_1(0)B_1(\epsilon)B_2^{-1}(\epsilon).
\end{split}
\end{equation}
We compute $\partial_s\partial_{s'} A(0, \epsilon)$. Using the relation \eqref{230823e3_83}, we obtain 
\begin{equation}
\begin{split}
\partial_s\partial_{s'} A(0, \epsilon)& =
C'_1(0)B'_1(\epsilon)+C'_2(0) B'_2(\epsilon)\\
&=
-B_2^{-1}(\epsilon)B'_1(\epsilon)+
B_2^{-1}(\epsilon)B_1(\epsilon)B_2^{-1}(\epsilon) B'_2(\epsilon).
\end{split}
\end{equation}
This is further equal to 
\begin{equation}
\begin{split}
& 
\pnorm{
\frac{1}{\epsilon} I+\frac{R_0}{3!}\epsilon+ 
\frac{2R'_0}{4!}\epsilon^2+O(\epsilon^3)
}
\pnorm{
R_0 \epsilon+\frac{R'_0}{2!}\epsilon^2+O(\epsilon^3)
}\\
&+\frac{1}{\epsilon^2} 
\pnorm{
I+ \frac{R_0}{3!}\epsilon^2+ \frac{2R'_0}{4!}\epsilon^3+ O(\epsilon^4)
}
\pnorm{
I-\frac{R_0}{2!} \epsilon^2-\frac{R'_0}{3!}\epsilon^3+O(\epsilon^4)
}\\
& \pnorm{
I+ \frac{R_0}{3!}\epsilon^2+ \frac{2R'_0}{4!}\epsilon^3+ O(\epsilon^4)
}
\pnorm{
I-\frac{R_0}{2!}\epsilon^2-\frac{2R'_0}{3!}\epsilon^3+O(\epsilon^4)
}\\
&= \pnorm{
R_0+ \frac{R'_0}{2}\epsilon+ O(\epsilon^2)
}
+ 
\frac{1}{\epsilon^2}
\pnorm{
I-\frac{2}{3}R_0 \epsilon^2-\frac{2}{3!}R'_0\epsilon^3+ O(\epsilon^4)
}\\
&=
\frac{1}{\epsilon^2}\pnorm{
I+\frac{R_0}{3}\epsilon^2+ \frac{R'_0}{6} \epsilon^3+ O(\epsilon^4)}
\end{split}
\end{equation}
This finishes the calculation for the first identity in the claim. \\

We compute $\partial^2_s\partial_{s'} A(0, \epsilon)$. First, observe that 
\begin{equation}\label{230904e3_104}
\partial^2_s\partial_{s'} A(0, \epsilon)=
2 B_2^{-1}(\epsilon) B_1(\epsilon) \partial_s\partial_{s'} A(0, \epsilon).
\end{equation}
We use the first identity in the claim, and see that the last display is equal to 
\begin{multline}
\frac{2}{\epsilon^3} 
\pnorm{
I+ \frac{R_0}{3!}\epsilon^2+ \frac{2R'_0}{4!}\epsilon^3+ O(\epsilon^4)
}\\
\pnorm{
I-\frac{R_0}{2!} \epsilon^2-\frac{R'_0}{3!}\epsilon^3+O(\epsilon^4)
}
\pnorm{
I+\frac{R_0}{3}\epsilon^2+ \frac{R'_0}{6} \epsilon^3+ O(\epsilon^4)},
\end{multline}
which is further equal to 
\begin{equation}
\begin{split}
\frac{2}{\epsilon^3}\pnorm{
I+
\frac{2R'_0}{4!}\epsilon^3+O(\epsilon^4)
}= \frac{2}{\epsilon^3} I+
\frac{R'_0}{3!}+O(\epsilon).
\end{split}
\end{equation}
This finishes the second identity in the claim.\\

Next, we compute $\partial^3_s\partial_{s'} A(0, \epsilon)$. By \eqref{230823e3_83}, we obtain 
\begin{equation}
\partial_s^3 \partial_{s'} A(0, \varepsilon)=C_1^{\prime \prime \prime}(0) \cdot B_1^{\prime}(\varepsilon)+C_2^{\prime \prime \prime}(0) \cdot B_2^{\prime}(\varepsilon).
\end{equation}
By \eqref{230831e3_97}, this is equal to 
\begin{equation}
\begin{aligned}
&C_1^{\prime \prime \prime}(0) B_1^{\prime}(\varepsilon)-C_1^{\prime \prime \prime}(0) B_1(\varepsilon) B_2^{-1}(\varepsilon) B_2^{\prime}(\varepsilon) \\
& =C_1^{\prime \prime \prime}(0)\left(B_1^{\prime}(\varepsilon)-B_1(\varepsilon) B_2^{-1}(\varepsilon) B_2^{\prime}(\varepsilon)\right)\\
& 
=-\left(3 C_2^{\prime \prime}(0)+2 B_2^{-1}(\varepsilon) R(0)\right)\left(B_1^{\prime}(\varepsilon)-B_1(\varepsilon) B_2^{-1}(\varepsilon) B_2^{\prime}(\varepsilon)\right)\\
&= 
-\left(6 B_2^{-1}(\varepsilon) B_1(\varepsilon) B_2^{-1}(\varepsilon) B_1(\varepsilon) B_2^{-1}(\varepsilon)+2 B_2^{-1}(\varepsilon) R_0\right)\\
& \times \left(B_1^{\prime}(\varepsilon)-B_1(\varepsilon) B_2^{-1}(\varepsilon) B_2^{\prime}(\varepsilon)\right).
\end{aligned}
\end{equation}
We compute the two factors in the last term separately. The first fact is equal to 
\begin{equation}
\begin{split}
& 
\frac{6}{\epsilon^3}
\pnorm{
I
-
\frac{R_0}{3} \epsilon^2
-
\frac{R'_0}{12} \epsilon^3+ O(\epsilon^4)
}
\pnorm{
I
-
\frac{R_0}{3} \epsilon^2
-
\frac{R'_0}{12} \epsilon^3+ O(\epsilon^4)
}\\
& \times 
\pnorm{
I+
\frac{R_0}{3!}\epsilon^2+
\frac{2R'_0}{4!}\epsilon^3+O(\epsilon^4)
}\\
& +
\frac{2}{\epsilon}\left(I+\frac{R_0}{3 !} \epsilon^2+\frac{2 R^{\prime}_0}{4 !} \epsilon^3+O\left(\epsilon^4\right)\right) \cdot R_0\\
&=
\frac{6}{\epsilon^3}\left(I-\frac{R_0}{2} \epsilon^2-\frac{R^{\prime}_0}{12} \epsilon^3+O\left(\epsilon^4\right)\right)+\frac{2}{\epsilon} \cdot R_0+O(\epsilon)\\
& =
\frac{6}{\epsilon^3}\left(I-\frac{R_0}{6} \epsilon^2-\frac{R^{\prime}_0}{12} \epsilon^3+O\left(\epsilon^4\right)\right).
\end{split}
\end{equation}
The second factor is equal to 
\begin{equation}
\begin{split}
& -R_0 \cdot \epsilon-\frac{R^{\prime}_0}{2} \cdot \epsilon^2+O\left(\epsilon^3\right)\\
& -\frac{1}{\epsilon}\left(I-\frac{R_0}{2} \epsilon^2-\frac{R_0^{\prime}}{6} \epsilon^3+O\left(\epsilon^4\right)\right)\left(I+\frac{R_0}{6} \epsilon^2+\frac{2 R^{\prime}_0}{4 !} \epsilon^3+O\left(\epsilon^4\right)\right)\\
& \times 
\left(I-\frac{R_0}{2} \epsilon^2-\frac{R_0^{\prime}}{3} \cdot \epsilon^3 + O(\epsilon^4)\right)\\
&=
-R_0 \epsilon-\frac{R_0^{\prime}}{2} \cdot \epsilon^2+0\left(\epsilon^3\right)-\frac{1}{\epsilon}\left(I-\frac{5}{6} R_0 \epsilon^2-\frac{5}{12} \cdot R_0^{\prime} \epsilon^3+O\left(\epsilon^4\right)\right)\\
&=
-\frac{1}{\epsilon}\left(I+\frac{1}{6} R_0 \epsilon^2+\frac{1}{12} R_0^{\prime} \epsilon^3+O\left(\epsilon^4\right)\right).
\end{split}
\end{equation}
Multiplying these two factors, we obtain the desired equation for $\partial^3_s\partial_{s'} A(0, \epsilon)$.\\

The last equation involving $\partial^4_s\partial_{s'}A(0, \epsilon)$ can be proven similarly. We leave out the computations. \end{proof}

\section{Proof of Theorem \ref{230816theorem2_4}}
\label{230405section6}

For $N\ge 1$, $\bfx=(x, t)\in \R^3, y\in \R^2$, denote 
\begin{equation}
\phi^N(\bfx; y):=
N\phi(\bfx/N; y), \ \ a^{N}(\bfx; y):=a(\bfx/N; y).
\end{equation}
Let $\epsilon_{\phi}>0$ be a sufficiently small constant depending on $\phi$. Define an operator 
\begin{equation}
T^N f(\bfx):= \int e^{
i\phi^N(\bfx; y)
} a^N(\bfx; y)dy.
\end{equation}
Note that $T^N f$ is just a rescaled version of $T_N^{(\phi)}f$. The goal of this section is to prove the following theorem. 

\begin{theorem}\label{230826theorem4_1}
Under the same assumptions as in Theorem \ref{230816theorem2_4}, we have 
\begin{equation}\label{230825e4_3}
\norm{T^N f}_{L^p(\R^3)} \lesim_{\phi, a, p, \epsilon} N^{\epsilon} \norm{f}_{L^p(\R^2)},
\end{equation}
for all 
\begin{equation}\label{230827e4_4}
p\ge \frac{10}{3}-\epsilon_k,
\end{equation}
all $\epsilon>0$ and $N\ge 1$. 
\end{theorem}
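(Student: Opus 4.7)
My plan is to follow the polynomial partitioning framework of Guth \cite{Gut16} combined with the broad/narrow dichotomy of Bourgain--Guth \cite{BG11}, adapted to H\"ormander-type operators as in \cite{GHI19} and \cite{GWZ22}. After the rescaling that sends $T_N^{(\phi)}$ to $T^N$, the estimate \eqref{230825e4_3} is essentially an $L^p$ bound on frequency-localized pieces, which one reduces via a standard wave packet decomposition at scale $N^{1/2}$ (or, more generally, at each dyadic intermediate scale used in the induction) to a problem about superpositions of plaques along the curved tubes $T_y^{\delta,(\phi)}(\omega)$ of Definition \ref{230617defi1_6}. One then runs the polynomial partitioning induction: decompose $\R^3$ by the zero set of a polynomial of controlled degree, handle the cellular contribution by H\"older and induction on $N$, and reduce the algebraic (neighborhood-of-variety) contribution to an estimate on how many $\delta$-tubes can lie inside the $\delta$-neighborhood of a low-degree variety.

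\textbf{Broad--narrow reduction and multilinear input.} Following the broad--narrow scheme, at each scale I partition the $y$-ball into caps of a small angular scale and either (i) the mass of $f$ concentrates on few caps whose associated directions are nearly coplanar (the narrow case), handled by induction on the radius after rescaling, or (ii) the mass is spread across transverse caps (the broad case), where I apply the multilinear oscillatory integral estimate of Bennett--Carbery--Tao type, in the form extended to H\"ormander-type phases by Bourgain--Guth and Guth. For the positive-definite $A$ regime, the multilinear estimate together with induction already gives the Guth range $p \geq 10/3$ if one has the polynomial Wolff axiom; my job is to squeeze out the extra $\epsilon_k$ by leveraging the finite contact order assumption.

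\textbf{Key new ingredient: polynomial Wolff axiom with gain $\epsilon_k$.} The central step is to prove the following Wolff-type axiom: for any algebraic variety $Z \subset \R^3$ of degree $D$, the number of $\delta$-tubes $T_y^{\delta,(\phi)}(\omega)$ (belonging to a $\delta$-separated family with $N^{-1/2} \le \delta \le 1$) that spend a significant fraction of their length in the $\delta$-neighborhood of $Z$ is bounded by $C D^{2+c_k}\,\delta^{-2+c_k'}$, where the gain $c_k,c_k'$ over the trivial count is quantified in terms of the contact order $k$. The point is that, because the matrix \eqref{230906e2_21} at the origin has rank $4$, the family of central curves $\Gamma_y^{(\phi)}(\omega)$ cannot osculate any $2$-dimensional algebraic surface to order $>k$; a curve touching $Z$ to contact order $\leq k$ enters the $\delta$-neighborhood of $Z$ only in arcs of length $\lesssim \delta^{1/k}$ or similar, and a pigeonhole/incidence argument then limits the number of tubes that can collide in the $\delta$-neighborhood of $Z$. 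This is the direct analogue, in the H\"ormander setting, of Guth--Zahl's and Katz--Rogers' polynomial Wolff axioms for straight lines, with the quantitative gain controlled by $k$.

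\textbf{Main obstacle.} The principal difficulty is establishing this quantitative polynomial Wolff axiom from the matrix-rank condition \eqref{230906e2_21}. The rank hypothesis gives only a generic statement about the linear span of the $s$-derivatives of $\mathfrak{D}(s)$ and $D_{ij}(s)$, and one must upgrade this infinitesimal information to a uniform quantitative bound on how tangencies of the family $\{\Gamma_y(\omega)\}$ with an arbitrary variety $Z$ of degree $D$ can cluster. I expect to do this via a B\'ezout-type argument: parametrize the tubes intersecting a $\delta$-neighborhood of $Z$ by their contact locus with $Z$, use the fact that a high-order tangency forces the vanishing of the minors appearing in \eqref{230906e2_21}, and deduce that the set of ``bad'' $(\omega,y)$ lies in a subvariety of small codimension whose degree is controlled polynomially in $D$. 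Feeding the resulting axiom into the Guth--Hickman--Iliopoulou inductive scheme then yields \eqref{230827e4_4} with the explicit gain $\epsilon_k = 1/(9k-6)$, matching the known values at $k=2$ and $k=4$.
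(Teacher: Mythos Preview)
Your overall framework (broad--narrow reduction to a broad estimate, then polynomial partitioning as in \cite{GHI19} and \cite{GWZ22}) matches the paper's, but you misidentify where and how the contact order hypothesis enters. There is a genuine gap in your ``key new ingredient.''

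You propose that finite contact order yields an improved polynomial Wolff axiom at the unit scale, via a B\'ezout-type tangency argument: tubes cannot osculate a variety $Z$ too well, so fewer tubes fit in $\mathcal{N}_\delta(Z)$. This is not what the matrix-rank condition \eqref{230906e2_21} says, and the paper does not attempt any such argument. The rank hypothesis is a statement about the Taylor coefficients of the four scalar functions $\mathfrak{D}(t), D_{11}(t), D_{12}(t), D_{22}(t)$ along a single curve; it is precisely what is needed to bound from below the integral
\[
\int_{|t|\le \epsilon_\phi}\bigl|\det\bigl(M+\nabla_\xi^2\phi(\Phi(v,t;\xi),t;\xi)\bigr)\bigr|\,dt \;\gtrsim_\phi 1
\]
uniformly over $2\times 2$ matrices $M$ (because the determinant expands as $\mathfrak{D}(t)$ plus linear combinations of the $D_{ij}$ plus a constant, and rank $4$ forces some derivative of order $\le k$ to be bounded away from zero). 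This integral lower bound is exactly the hypothesis of \cite[Theorem~3.1]{GWZ22}, and it gives the \emph{standard} polynomial Wolff axiom at the unit scale with no extra $k$-dependent gain.

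The contact order $k$ actually enters through rescaling. In the polynomial partitioning algorithm one reaches an intermediate radius $r_2\le R=N$; rescaling the $r_2$-ball to unit size replaces $\phi$ by $\phi_{N_2}(x,t;\xi)=N_2\phi(x/N_2,t/N_2;\xi)$ with $N_2=N/r_2$. For this rescaled phase the same integral is only $\gtrsim (N_2)^{-(k-2)}$, so the polynomial Wolff axiom at scale $r_2$ holds with a loss of $(N/r_2)^{k-2}$. Separately, comparing $\phi_{N_2}$ to a translation-invariant truncation gives a bound with loss $r_2/N$. Taking the minimum and interpolating with the older estimate $\|f^*_{\iota,S_2}\|_2^2\lesssim D_2^{-2}\|f\|_2^2$ from the partitioning hierarchy yields, after optimizing one free parameter, the exponent $p=3+\frac{k-1}{3k-2}=\frac{10}{3}-\frac{1}{9k-6}$.

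In short: you should replace your tangency/B\'ezout step by (i) the integral lower bound above at the unit scale, and (ii) its rescaled version tracking the $(N/r_2)^{k-2}$ loss; the $\epsilon_k$ emerges only after balancing this loss against the cellular gains in the partitioning bookkeeping, not from any single-scale Wolff axiom.
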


To prove Theorem \ref{230826theorem4_1}, we follow the idea of Guth \cite{Gut18} and Bourgain and Guth \cite{BG11}, and reduce it to ``broad" estimates. 

Let $K\ge 1$. We divide $\B^2_{\epsilon_{\phi}}$ into dyadic squares $\tau$ of side length $K^{-1}$. Denote $f_{\tau}:=f\cdot \mathbbm{1}_{\tau}$. Fix a ball $\B^3_{K^2}(\bfx_0)$. Define 
\begin{equation}
\mu_{T^N f}(\B^3_{K^2}(\bfx_0)):= 
\min_{\tau_1, \dots, \tau_{A_0}}
\pnorm{
\max_{\tau\neq \tau_{A'_0}, 1\le A'_0\le A_0}
\norm{T^N f_{\tau}}_{
L^p(\B_{K^2}^3(\bfx_0))
}^p
},
\end{equation}
where $A_0$ is a large parameter whose choice will become clear later.  For $U\subset \R^3$, define 
\begin{equation}
\left\|T^N f\right\|_{\mathrm{BL}_{A_0}^p(U)}:=\left(\sum_{\B^3_{K^2}(\bfx_0)} \frac{\left|\B^3_{K^2}(\bfx_0) \cap U\right|}{\left|\B_{K^2}^3(\bfx_0)\right|} \mu_{T^N f}\left(\B^3_{K^2}(\bfx_0)\right)\right)^{1 / p},
\end{equation}
where the sum runs over a finitely overlapping collection of balls $\B^3_{K^2}(\bfx_0)$ that covers $\R^3$. This is called the \underline{broad part} of $T^N f$. 

\begin{theorem}\label{230826theorem4_2}
For every $\epsilon>0$, there exists $A_0$ such that 
\begin{equation}
\left\|T^N f\right\|_{\mathrm{BL}_{A_0}^p(\R^3)} \lesssim_{K, \epsilon} N^\epsilon
\|f\|_{L^2}^{2 / p}\|f\|_{L^{\infty}}^{1-2 / p},
\end{equation}
for every $p$ satisfying \eqref{230827e4_4},
 every $K\ge 1, \epsilon>0$, and $N\ge 1$.  Moreover, the implicit constant depends polynomially on $K$. 
\end{theorem}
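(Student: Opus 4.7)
The plan is to prove Theorem \ref{230826theorem4_2} by combining the polynomial partitioning method of Guth \cite{Gut16,Gut18} with the broad--narrow decomposition of Bourgain--Guth \cite{BG11}, where the key new input reflecting the finite contact order hypothesis is a polynomial Wolff axiom adapted to the curves $\Gamma^{(\phi)}_y(\omega)$. Since $n=3$ and $A$ is positive definite, the broad quantity $\norm{T^N f}_{\mathrm{BL}^p_{A_0}}$ behaves as a trilinear analogue of the Fourier restriction problem on the paraboloid, with the straight tubes there replaced by the central curves cut out by $\nabla_y\phi(\cdot;y)=\mathrm{const}$.

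First I would perform a standard wavepacket decomposition associating $T^N f$ with curved tubes of radius $N^{1/2}$ and length $N$, and then apply the Bourgain--Guth broad--narrow split at scale $K^{-1}$ on each ball $\B^3_{K^2}(\bfx_0)$. The narrow part, where $T^N f$ is essentially concentrated near a single two-plane, is absorbed by lower dimensional decoupling together with $L^p$-orthogonality; the broad part then reduces to a $k$-broad trilinear inequality, for which the main tool is a Bennett--Carbery--Tao type transverse Kakeya input. Next I would run induction on scales: at each step find a polynomial $P$ of degree $D\simeq N^\delta$ whose zero set $Z(P)$ equidistributes the remaining broad $L^p$-mass among $\simeq D^3$ cells of $\R^3\setminus Z(P)$. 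The cellular contribution is absorbed by the induction hypothesis on each smaller cell, iterating at scales $D=N^{\delta}, N^{2\delta}, \dots$. What remains is the wall contribution, namely the tubes $T^{\delta}_y(\omega)$ contained in the $N^{1/2}$-neighborhood of $Z(P)$, and this is where the hypothesis on contact order has to enter.

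The crucial new ingredient is a polynomial Wolff axiom asserting that for any real algebraic surface $Z\subset\R^3$ of degree $D$, the number of $\delta$-tubes $T^{\delta}_y(\omega)$ contained in $\mc{N}_\delta(Z)$ is $\lesim_k D^{\alpha(k)}\delta^{1-n}$, for an exponent $\alpha(k)$ that degrades as the allowed contact order $k$ grows. Geometrically, tubes in $\mc{N}_\delta(Z)$ correspond to central curves tangent to $Z$ to high order, while the rank-$4$ condition on the matrix in \eqref{230906e2_21} says that the Hessian $\partial_y^2\phi$ along $\Gamma_y(\omega)$ varies non-trivially up to order $k$, forcing nearby central curves to separate from $Z$ quantitatively, which translates into a bound on the number of ``offending'' frequencies $y$. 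Balancing this Wolff bound against the cellular estimate in the induction-on-scales recursion should yield exactly the threshold $p\ge 10/3-1/(9k-6)$; the values $k=2$ giving $p\ge 3.25$ and $k=4$ giving $p\ge 3.3$ mentioned in the introduction serve as sanity checks on the numerology.

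The main obstacle will be extracting such a quantitative polynomial Wolff axiom with an explicit exponent $\alpha(k)$ from the pointwise rank condition on \eqref{230906e2_21}. In the straight line case this is Guth's polynomial Wolff axiom, whose proof leans on Tao's quantitative algebraic geometry and does not literally transfer to curved tubes. My plan is to linearize the tangency condition along $\Gamma_y(\omega)$, rewrite ``$T^{\delta}_y(\omega)\subset\mc{N}_\delta(Z)$'' as a semi-algebraic system in $y$ whose coefficients are built from mixed derivatives $\partial_\bfx^\alpha\partial_y^\beta\phi$ (precisely the quantities entering \eqref{230906e2_21}), and then use the Taylor expansion of the full-rank condition up to order $k$ to bound the B\'ezout-type complexity of this system by a polynomial in $D$ whose exponent $\alpha(k)$ interpolates between Guth's value at $k=2$ and the Bourgain--Guth baseline at $k=\infty$. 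A secondary technical point is designing the wavepacket decomposition to remain compatible with the curved tubes through every stage of the polynomial partitioning induction, so that the same Wolff axiom can be invoked uniformly at all scales.
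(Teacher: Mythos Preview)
Your high-level strategy---polynomial partitioning plus a polynomial Wolff axiom whose strength depends on the contact order $k$---matches the paper's, but the way you propose to implement the Wolff axiom is both different from and vaguer than what the paper does, and your formulation of where the $k$-loss enters is not quite right. In the paper, the polynomial Wolff axiom at the \emph{unit} scale (Theorem~\ref{230831theorem4_6}) holds with \emph{no} loss in $k$ at all: for any semi-algebraic set $S$ of bounded complexity, the number of direction-separated curved tubes contained in $S$ is $\lesssim \mathcal{L}^3(S)\kappa^{-2-\epsilon'}$. This is proved not by any B\'ezout or tube-counting argument, but by invoking \cite[Theorem 3.1]{GWZ22}, which reduces the question to the single determinantal lower bound
\[
\int_{|t|\le\epsilon_\phi}\bigl|\det\bigl(M+\nabla_\xi^2\phi(\Phi(v,t;\xi),t;\xi)\bigr)\bigr|\,dt\gtrsim_\phi 1
\]
uniformly in $M,v,\xi$; the rank-$4$ condition on \eqref{230906e2_21} is exactly what guarantees this integral cannot vanish (via Claim~\ref{231015claim4_7}). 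The $k$-dependence enters only when one needs the Wolff axiom at an \emph{intermediate} scale $r_2<R=N$ in the partitioning algorithm: after rescaling by $N_2=N/r_2$, the same determinantal integral for the rescaled phase $\phi_{N_2}$ only satisfies the lower bound $\gtrsim (N_2)^{-(k-2)}$ (Claim~\ref{230902claim4_9}), which is where the loss $(N/r_2)^{k-2}$ in Proposition~\ref{230902prop4_8} comes from. This is not a $D^{\alpha(k)}$ loss as you suggest.

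There is also a second ingredient you are missing. The paper pairs the $(N/r_2)^{k-2}$ bound with a \emph{second} bound coming from approximating the rescaled curved tubes by straight tubes (the curves \eqref{231022e4_87} lie within $N_2^{-1}$ of the straight lines \eqref{231022e4_89}), which gives the alternative estimate $\mathcal{L}^3(S_{N_2^{-1}})\kappa^{-2}$ in Claim~\ref{230902claim4_9}. Taking the \emph{minimum} of these two bounds and then optimizing the resulting expression \eqref{230902e4_69} over the interpolation parameter $\gamma$ in \eqref{230902e4_70}--\eqref{231016e4_89} is what produces the exact exponent $p=3+\frac{k-1}{3k-2}=\frac{10}{3}-\frac{1}{9k-6}$. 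Your plan to ``balance the Wolff bound against the cellular estimate'' is too schematic to recover this; in particular, the B\'ezout-type argument you sketch for counting tangent curves does not obviously yield either of the two bounds in Claim~\ref{230902claim4_9}, and without the second (straight-tube) bound the optimization breaks down. I would recommend abandoning the B\'ezout route and instead verifying the determinantal criterion from \cite{GWZ22} directly, then tracking carefully how it degrades under the rescaling $\phi\mapsto\phi_{N_2}$.
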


Reducing Theorem \ref{230826theorem4_1} to Theorem \ref{230826theorem4_2} can be done via standard arguments in the literature (for instance Guth, Hickman and Iliopoulou \cite[Proposition 11.1]{GHI19}). 

%
%

The rest of this section is devoted to the proof of Theorem \ref{230826theorem4_2}.

\subsection{Preliminaries in polynomial partitionings}\label{230902subsection4_1}

We follow the notation from \cite{GWZ22}. Take 
\begin{equation}
1\le r\le R= N.
\end{equation}
The only reason of introducing the parameter $R$ is just for the forthcoming notation to be consistent with that in  \cite{GWZ22}. Take a collection $\Theta_r$ of dyadic cubes of side length $\frac{9}{11} r^{-1 / 2}$ covering the ball $\B^{2}$, the unit ball in $\R^2$. We take a smooth partition of unity $\left(\psi_\theta\right)_{\theta \in \Theta_r}$ with $\operatorname{supp} \psi_\theta \subset \frac{11}{10} \theta$ for the ball $\B^{2}$ such that
$$
|\partial_{\xi}^\alpha \psi_\theta(\xi)| \lesssim_\alpha r^{\|\alpha\|_{\infty} / 2},
$$
for all $\xi\in \R^2$ and all multi-indices $\alpha=(\alpha_1, \alpha_2) \in \mathbb{N}_0^{2}$ with $\|\alpha\|_{\infty}:=|\alpha_1|+|\alpha_2|$. We denote by $\xi_\theta$ the center of $\theta$. Given a function $h$, we perform a Fourier series decomposition to the function $h \psi_\theta$ on the region $\frac{11}{9} \theta$ and obtain
$$
h(\xi) \psi_\theta(\xi) \cdot \mathbbm{1}_{\frac{11}{10} \theta}(\xi)=\left(\frac{r^{1 / 2}}{2 \pi}\right)^{2} \sum_{v \in r^{1 / 2} \mathbb{Z}^{2}}
\left(h \psi_\theta\right)^{\wedge}(v) e^{2 \pi i v \cdot \xi} \mathbbm{1}_{\frac{11}{10} \theta}(\xi) .
$$
Let $\widetilde{\psi}_\theta$ be a non-negative smooth cutoff function supported on $\frac{11}{9} \theta$ and equal to 1 on $\frac{11}{10} \theta$. We can therefore write
$$
h(\xi) \psi_\theta(\xi) \cdot \widetilde{\psi}_\theta(\xi)=\left(\frac{r^{1 / 2}}{2 \pi}\right)^{2} \sum_{v \in r^{1 / 2} \mathbb{Z}^{2}}\left(h \psi_\theta\right)^{\wedge}(v) e^{2 \pi i v \cdot \xi} \widetilde{\psi}_\theta(\xi)
$$
If we also define
$$
h_{\theta, v}(\xi):=\left(\frac{r^{1 / 2}}{2 \pi}\right)^{2}\left(h \psi_\theta\right)^{\wedge}(v) e^{2 \pi i v \cdot \xi} \widetilde{\psi}_\theta(\xi),
$$
then we have 
\begin{equation}
h=\sum_{(\theta, v)\in \Theta_r\times r^{1/2}\Z^2} h_{\theta, v}.
\end{equation}
For $\omega\in \B^2_{\epsilon_{\phi}}$ and $\xi\in \B^2_{\epsilon_{\phi}}$, we define $\Phi=\Phi(\omega, t; \xi)$ by 
\begin{equation}\label{230828e4_10}
\nabla_{\xi} \phi\pnorm{
\Phi(\omega, t; \xi), t; \xi
}=\omega.
\end{equation}
Let 
\begin{equation}\label{230828e4_11oo}
\delta:=\epsilon^C, \ \  C \text{ large universal constant,}
\end{equation}
where $\epsilon$ is as in \eqref{230825e4_3}. We define curved $r^{1/2+\delta}$-tubes as 
\begin{equation}\label{230828e4_12}
T_{\theta, v}:=
\Big\{(x, t): 
\anorm{
\frac{x}{N}-\Phi\pnorm{
\frac{v}{N}, \frac{t}{N}; \xi_{\theta}
}
}\le 
\frac{r^{1/2+\delta}}{N}, \ t\in [0, r]
\Big\}
\end{equation}
This finishes the wave packet decomposition for the ball $\B^3_r\subset\R^3$. We use $\T[
\B^3_r
]$ to denote the collection of the wave packets $T_{\theta, v}$. 
Similarly, for $\bfx_0\in \B^3_N$, we can define wave packet decompositions for the ball $\B^3_{r}(\bfx_0)$, and use $ \mathbb{T}\left[\B^3_r\left(\mathbf{x}_0\right)\right]$ to denote the collection of the resulting wave packets. We have 
\begin{equation}
T^N h(\mathbf{x})=\sum_{T \in \mathbb{T}\left[\B^3_r\left(\mathbf{x}_0\right)\right]} T^N h_T(\mathbf{x})
\end{equation}
whenever $|\bfx-\bfx_0|\lesim r$. \\

Next, we introduce a few key notions that will appear in the forthcoming polynomial partitioning algorithms.

\begin{definition}[Cells]
Let $P: \R^3\to \R$ be a non-zero polynomial. Denote 
\begin{equation}
Z(P):=\{z\in \R^3: P(z)=0\}.
\end{equation}
We let $\cell(P)$ denote the collection of all the connected components of $\R^3\setminus Z(P)$. Each element in $\cell(P)$ will be refereed to as a cell of $P$. 
\end{definition}

\begin{definition}[Transverse complete intersection] Take the dimension $n=3$ and $0\le m\le 2$. Let $P_1, \ldots, P_{n-m}: \mathbb{R}^n \rightarrow$ $\mathbb{R}$ be polynomials. We consider the common zero set
\begin{equation}\label{230827e4_14}
Z\left(P_1, \ldots, P_{n-m}\right):=\left\{x \in \mathbb{R}^n: P_1(x)=\cdots=P_{n-m}(x)=0\right\} .
\end{equation}
Suppose that for all $z \in Z\left(P_1, \ldots, P_{n-m}\right)$, one has
\begin{equation}
\bigwedge_{j=1}^{n-m} \nabla P_j(z) \neq 0 .
\end{equation}
Then a connected branch of this set, or a union of connected branches of this set, is called an m-dimensional transverse complete intersection. Given a set $Z$ of the form \eqref{230827e4_14}, the degree of $Z$ is defined by
$$
\min \left(\prod_{i=1}^{n-m} \operatorname{deg}\left(P_i\right)\right)
$$
where the minimum is taken over all possible representations of $$Z=Z\left(P_1, \ldots, P_{n-m}\right).$$
\end{definition}

\begin{lemma}[Polynomial partitioning lemma, Guth \cite{Gut18}, Hickman and Rogers \cite{HR19}]\label{230827lemma4_5}
Fix $r \gg 1, d \in \mathbb{N}$ and suppose $F \in L^1\left(\mathbb{R}^3\right)$ is non-negative and supported on $\B^3_r(\bfx_0) \cap \mathcal{N}_{r^{1 / 2+\delta_{\circ}}}(Z)$ for some $\bfx_0$ and $0<\delta_{\circ} \ll 1$, where $Z$ is an $m$ dimensional transverse complete intersection of degree at most $d$. At least one of the following cases holds:\\

\noindent \underline{Cellular case.} There exists a polynomial $P: \mathbb{R}^3 \rightarrow \mathbb{R}$ of degree $O(d)$ with the following properties:
\begin{enumerate}
\item[(1)] We can find a sub-collection of cells $\cell'(P)\subset \cell(P)$ with  $\# \operatorname{cell}'(P) \simeq d^m$ and each $O^{\prime} \in \operatorname{cell}'(P)$ has diameter at most $r / d$.
\item[(2)] If we define
$$
\mathcal{O}:=\left\{O^{\prime} \backslash \mathcal{N}_{r^{1 / 2+\delta_{\circ}}}(Z(P)): O^{\prime} \in \operatorname{cell}'(P)\right\},
$$
then
\begin{equation}
\int_O F \simeq d^{-m} \int_{\mathbb{R}^n} F,
\end{equation}
for all $O\in \mc{O}$. 
\end{enumerate}

\noindent \underline{Algebraic case.} There exists an $(m-1)$-dimensional transverse complete intersection $Y$ of degree at most $O(d)$ such that
$$
\int_{\B^3_r(\bfx_0) \cap \mathcal{N}_{r^{1 / 2+\delta_{\circ}}}(Z)} F \lesssim \int_{\B^3_r(\bfx_0) \cap \mathcal{N}_{r^{1/2+\delta_{\circ}}}(Y)} F
$$
\end{lemma}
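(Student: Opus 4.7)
The plan is to follow Guth's polynomial partitioning strategy adapted to the relative setting where the function $F$ lives in a thin neighborhood of the transverse complete intersection $Z$ of dimension $m$. Two ingredients drive the argument: the ham-sandwich theorem applied through the Veronese embedding, which produces a partitioning polynomial of controlled degree, and a Bezout-type bound that limits the number of cells to $O(d^m)$.

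First, I would iteratively bisect $\int F$ along $Z$. Working with the measure $F$ restricted to $\mathcal{N}_{r^{1/2+\delta_\circ}}(Z)$, Stone--Tukey produces a polynomial that splits this measure into two equal halves along $Z$; repeating this $\lceil m \log_2 d \rceil$ times and combining the polynomials via the Veronese embedding yields a single polynomial $P : \mathbb{R}^3 \to \mathbb{R}$ of degree $O(d)$ such that the connected components of $Z \setminus Z(P)$ fall into $\Theta(d^m)$ groups, each carrying a comparable portion of the mass. The count $O(d^m)$ on connected components of $Z \setminus Z(P)$ is precisely where the Bezout/Milnor bound enters, using that $Z$ has codimension $3-m$ and degree at most $d$. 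Pigeonholing to a subcollection of $\simeq d^m$ cells with comparable mass and discarding the $r^{1/2+\delta_\circ}$-neighborhood of $Z(P)$ delivers items (1) and (2) of the cellular case. The diameter bound $r/d$ is enforced by refining the partition with a few additional coordinate-type factors in $P$, still well within the $O(d)$ degree budget, as in \cite{HR19}.

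The dichotomy is then immediate: split the integral as
\begin{equation*}
\int F \;=\; \sum_{O \in \mathcal{O}} \int_{O} F \;+\; \int_{\mathcal{N}_{r^{1/2+\delta_\circ}}(Z(P)) \,\cap\, \mathcal{N}_{r^{1/2+\delta_\circ}}(Z)} F.
\end{equation*}
If the first term carries at least half of the total, we are in the cellular case. Otherwise, at least half of the total mass lies in $\mathcal{N}_{r^{1/2+\delta_\circ}}(Z(P) \cap Z)$, and one defines $Y$ as a suitable branch of the common zero set of $P_1, \ldots, P_{3-m}, P$, which has dimension $m-1$ and degree $O(d)$; this is the algebraic case.

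The main obstacle is ensuring that $Y$ is a genuine transverse complete intersection, i.e.\ that $\nabla P \wedge \nabla P_1 \wedge \cdots \wedge \nabla P_{3-m}$ does not vanish along $Y$. Transversality generically holds but can fail on a lower-dimensional singular locus of $Z(P) \cap Z$. Following \cite{Gut18, HR19}, I would perturb $P$ within the finite-dimensional family of degree $O(d)$ polynomials that still bisect $F$ appropriately along $Z$; a general-position / Sard-type argument shows that the admissible set of perturbations for which $Y$ is a transverse complete intersection is open and dense, so a good choice of $P$ exists. Alternatively, one decomposes $Z(P)\cap Z$ into its transverse and singular strata and absorbs the singular stratum into an auxiliary variety of no larger degree, then iterates. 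Once this transversality is restored, everything else reduces to routine Bezout counting and connected-component bookkeeping.
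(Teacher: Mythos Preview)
The paper does not prove this lemma; it is stated with attribution to Guth \cite{Gut18} and Hickman--Rogers \cite{HR19} and used as a black box in the polynomial partitioning algorithm of Section~\ref{230405section6}. Your sketch is a reasonable outline of the standard argument from those references (Stone--Tukey bisection via the Veronese map, Milnor--Thom/Bezout component counting on $Z\setminus Z(P)$, and a perturbation to make $Y$ a transverse complete intersection), and nothing more is needed for the purposes of this paper.
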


%
%

\subsection{Polynomial partitioning algorithms}

Our goal is to prove Theorem \ref{230826theorem4_2}, that is, we will prove 
\begin{equation}\label{230828e4_17}
\left\|T^N f\right\|_{\mathrm{BL}_{A_0}^p(\R^3)} \lesssim_{K, \epsilon} N^\epsilon
\|f\|_{L^2}^{2 / p}\|f\|_{L^{\infty}}^{1-2 / p},
\end{equation}
for all
\begin{equation}\label{230828e4_18qq}
p\ge \frac{10}{3}-\epsilon_k,
\end{equation}
all $K\ge 1$, $\epsilon>0$ and $N\ge 1$. \\

We will recycle the polynomial partitioning algorithm in \cite{GWZ22}, which is a variant of that in Hickman and Rogers \cite{HR19}. Let us be slightly more precise. We will repeat precisely the polynomial partitioning algorithm in Subsections 5.2-5.4 in \cite{GWZ22}, where Lemma \ref{230827lemma4_5} was repeatedly applied. In this algorithm, we will fix small parameters $\delta_j, j=0, 1, 2, 3$, satisfying 
\begin{equation}\label{230902e4_19}
\delta \ll \delta_3\ll \delta_2\ll \delta_1\ll \delta_0\ll \epsilon.
\end{equation}
For instance, one can take $\delta_j= \delta_{j-1}^{10}$ for $j=3, 2, 1$ and $\delta_0=\epsilon^{10}, \delta=\delta_3^{10}$. Here $\delta$ is the same as that in \eqref{230828e4_11oo}. We partition $\B^3_R$ into a finitely overlapping collection of balls $\{B_{\iota}\}_{\iota}$, each of which is of radius $R^{1-\delta}$.  Let us recall the output of this algorithm in \cite[page 48]{GWZ22}. 
\begin{enumerate}
\item[Output1] We obtain a sequence of nodes 
\begin{equation}
\mathfrak{n}_0^*, \mathfrak{n}_1^*, \ldots, \mathfrak{n}_{\ell_0}^*.
\end{equation}
Each node $\mfn^*_{\ell}$ with $0\le \ell\le \ell_0\in \N$ is a collection of open sets in $\R^3$, and is assigned several parameters: A dimension parameter $\dim(\mfn^*_{\ell})$ and a radius parameter $\rho(\mfn^*_{\ell})$. As we are in $\R^3$, we have $\dim(\mfn^*_{\ell})$ takes values in $\{2, 3\}$, and is non-increasing in $\ell$. Let $\ell_-\in \{0, 1, \dots, \ell_0\}$ be such that 
\begin{equation}
\dim(\mfn^*_{\ell})=3, \ \ \text{if } 0\le \ell\le \ell_-,
\end{equation}
and $\dim(\mfn^*_{\ell})=2$ otherwise.  It may happen that $\ell_-=\ell_0$. However, this case is easy to handle, and we therefore assume that we always have $\ell_-< \ell_0$. 
\item[Output2] The node $\mfn^*_{\ell_-}$ is particularly important. Denote
\begin{equation}
\mathfrak{S}_3:= \mfn^*_0, \ \ \mathfrak{S}_2:= \mfn^*_{\ell_-},
\end{equation}
and 
\begin{equation}
r_3:=\rho(\mathfrak{S}_3)=R, \ \ r_2:= \rho(\mathfrak{S}_2), \ \ r_1:=1. 
\end{equation}
The node $\mfn^*_0$ consists of only one element, $\B^3_R$. Elements in $\mathfrak{S}_2$ are of the form 
\begin{equation}\label{230827e4_24}
B_{r_2}\cap \mc{N}_{
r_2^{1/2+\delta_2}
}(S_2),
\end{equation}
where $B_{r_2}\subset \B^3_R$ is a ball of radius $r_2$ and $S_2$ is an algebraic variety of dimension two. \footnote{This is why in the previous item we let $\dim(\mfn_{\ell_-}^*)=2$. } To simplify notation, we will use $S_2$ to refer to \eqref{230827e4_24}, whenever it is clear from the context that we are talking about $\mathfrak{S}_2$.
\item[Output 3] Each open set $O\in \mfn^*_{\ell_0}$ has diameter at most $R^{\delta_0}$. This is the stopping condition of the algorithm (see \cite[page 41]{GWZ22}), which says that the algorithm terminates whenever we reach a scale $\le R^{\delta_0}$. Each $O\in \mfn^*_{\ell_0}$
 is associated with a function $f_{\iota, O}$, which is built with a collection of tubes from $\T[B_{\rho(\mfn^*_{\ell_0})}]$, that is, 
\begin{equation}
f_{\iota, O}=\sum_{T\in \T'[B_{\rho(\mfn^*_{\ell_0})}]} f_T,
\end{equation}
where $\T'[B_{\rho(\mfn^*_{\ell_0})}]$ is a sub-collection of tubes in $\T[B_{\rho(\mfn^*_{\ell_0})}]$, and  $B_{\rho(\mfn^*_{\ell_0})}\subset \R^3$ is the ball of radius $\rho(\mfn^*_{\ell_0})$ that contains $O$. 
\item[Output4] For each $\iota$ and each $S_2\in \mathfrak{S}_2$ with $S_2\cap B_{\iota}\neq \emptyset$, there is an associated function $f^*_{\iota, S_2}$. This function is built with a collection of tubes from $\T[B_{r_2}]$, where $B_{r_2}$ is as in \eqref{230827e4_24}. Most importantly, all the tubes in this collection are contained in $S_2$.

\item[Output5] In the end, we have parameters $D_3, D_2, D_1$ that are integer powers of $d$ satisfying $D_3=1$, and 
\begin{equation}\label{230828e4_26cc}
D_1\le r_2, \ \ r_1 D_1 D_2 D_3\le R, \ \ r_2 D_2 D_3\le R. 
\end{equation}
See Lemma 5.10 in \cite[page 50]{GWZ22}.\footnote{The bound $D_1\le r_2$ is not stated explicitly in Lemma 5.10 in \cite[page 50]{GWZ22}, but it can proven easily via the same argument. }
\end{enumerate}
Next, let us state what estimates the outputs satisfy. Denote $p_3=p$, $\alpha_3=\beta_3=1$, $\beta_2=\alpha_2$, and $\alpha_2\in [0, 1]$ is to be determined. Let $p_2$ be such that 
\begin{equation}\label{230828e4_27cc}
\frac{1}{p_3}=\frac{1-\alpha_2}{2}+\frac{\alpha_2}{p_2}.
\end{equation} 
Then the above outputs satisfy the following properties. 
\begin{enumerate}
\item[Property 1] We can find $C_p>0$ depending only on $p$ and $A_2\in \N$ with $A_2\le A_0$ depending only on $\epsilon$, such that  
\begin{equation}\label{230418e7_20}
\begin{split}
\|
T^N f
\|_{\mathrm{BL}_{A_0}^p\left(\B^3_R\right)} & \lesssim 
R^{C_p \delta_0} 
(r_2 D_2)^{\frac{1}{2}(1-\beta_2)}
\|f\|_2^{1-\beta_{2}}\\
& \left(\sum_{S_{2}
\in \mathfrak{S}_2
}
\sum_{\iota}
\left\|
T^N f^*_{\iota, S_{2}}
\right\|^{p_2}_{\mathrm{BL}_{A_2}^{p_{2}}\left(B_{r_{2}}\right)}\right)^{\frac{\beta_{2}}{p_{2}}},
\end{split}
\end{equation}
where $B_{r_2}$ is the ball of radius $r_2$ that contains $S_2$, given as in \eqref{230827e4_24}. We remark here that $C_p$ will also appear below, and its precise values will not be important and may change from line to line. 
\item[Property 2] We also have 
\begin{equation}
\sum_{S_2
\in \mathfrak{S}_2
} 
\norm{f^*_{\iota, S_2}}_2^2 \lesim R^{C_p \delta_0} D_2 \norm{f}_2^2.
\end{equation}
\item[Property 3] In the end, we have 
\begin{equation}\label{230828e4_30}
\max_{S_2
\in \mathfrak{S}_2
} \norm{f^*_{\iota, S_2}}_2^2 \lesim R^{C_p \delta_0}
D_2^{-2} \norm{f}_2^2
\end{equation}
and 
\begin{equation}\label{230828e4_31}
\max_{S_2
\in \mathfrak{S}_2
}  \max_{\ell(\theta)=\rho^{-1/2}}
\norm{f^*_{\iota, S_2}}_{L^2_{\mathrm{avg}}(\theta)}^2 \lesim R^{C_p \delta_0} 
\max_{\ell(\theta)=\rho^{-1/2}}
\norm{f}^2_{
L^2_{\mathrm{avg}}(\theta)
}
\end{equation}
for all $1\le \rho\le r_2$, where the max runs over all squares $\theta\subset \R^2$ of side length $\ell(\theta)=\rho^{-1/2}$, and 
\begin{equation}
\|h\|^2_{L^2_{\mathrm{avg}}(\theta)}:= \frac{1}{\mathcal{L}^2(\theta)} \int_{\theta}|h|^2
\end{equation}
for a function $h: \R^2\to \R$. 
\end{enumerate}
These three properties are taken from \cite[page 51]{GWZ22}, where we set $n=3, n'=2$. We do not need Property 4 there because it is relevant only for estimates in $\R^n$ with $n\ge 4$. \\

In Property 1, we connect the scale $r_3=\rho(\mfn^*_0)$ with the scale $r_2=\rho(\mfn^*_{\ell-})$. Now we state a few estimates that connect the scale  $r_3=\rho(\mfn^*_0)$ with the smallest scale $\rho(\mfn^*_{\ell_0})$. More precisely, we have 
\begin{equation}\label{230828e4_33}
\|
T^N f
\|_{\mathrm{BL}_{A_0}^p\left(\B^3_R\right)}
\lesim
R^{C_p \delta_0}
(r_2 D_2)^{\frac{1}{2}(1-\beta_2)}
D_1^{\frac{\beta_2}{p_2}} D_2^{\frac{\beta_2}{p_2}}
\norm{f}_2^{\frac{2}{p_3}} 
\max_{O\in 
\mfn^*_{\ell_0}
} \norm{f_{\iota, O}}_2^{1-\frac{2}{p_3}},
\end{equation}
where $\iota$ refers to the unique $B_{\iota}\subset \B^3_{R}$ of radius $R^{1-\delta}$ containing $O$. This estimate is precisely from equations (9.1)-(9.2) in \cite[page 86]{GWZ22}. Moreover, 
\begin{equation}\label{230828e4_34}
\max _{O \in 
\mfn^*_{\ell_0}
}\left\|f_{\iota, O}\right\|_2^2 
\lesssim
R^{C_p \delta_0}
r_2^{-\frac{1}{2}} D_{1}^{-1}
\max_{S_2\in 
\mathfrak{S}_2
}\left\|f^*_{\iota, S_2}\right\|_2^2,
\end{equation}
which is equation (9.3) in \cite[page 87]{GWZ22}, with $n=3, n'=2$. This finishes recalling the outputs of the algorithm in \cite{GWZ22} and the properties of the outputs. \\

Before we proceed, let us first see how to prove \eqref{230828e4_17} for the smaller range $p\ge 10/3$. We combine \eqref{230828e4_33}, \eqref{230828e4_34}, \eqref{230828e4_30}
and obtain 
\begin{equation}\label{230828e4_36cc}
\begin{split}
\|
T^N f
\|_{\mathrm{BL}_{A_0}^p\left(\B^3_R\right)}
&
\lesim R^{C_p \delta_0}
(r_2 D_2)^{\frac{1}{2}(1-\beta_2)}
D_1^{\frac{\beta_2}{p_2}} D_2^{\frac{\beta_2}{p_2}} r_2^{
-\frac{1}{2}(\frac{1}{2}-\frac{1}{p})
}
D_1^{-(\frac{1}{2}-\frac{1}{p})}
D_2^{
-2(\frac{1}{2}-\frac{1}{p})
}
\norm{f}_2\\
&\lesim R^{C_p \delta_0} (r_2 D_2)^{\frac{1}{2}(1-\beta_2)}
D_1^{\frac{\beta_2}{2}-2(\frac{1}{2}-\frac{1}{p})} 
r_2^{
-\frac{1}{2}(\frac{1}{2}-\frac{1}{p})
}
D_2^{
\frac{\beta_2}{2}
-3(\frac{1}{2}-\frac{1}{p})
}
\norm{f}_2,
\end{split}
\end{equation}
where in the second inequality we used \eqref{230828e4_27cc}. Recall the relations in \eqref{230828e4_26cc}, and that $\beta_2$ is a free parameter to choose. We impose the constraint that 
\begin{equation}
\frac{\beta_2}{2}
-2(\frac12-\frac1p)\ge 0,
\end{equation}
and obtain 
\begin{equation}
\begin{split}
\eqref{230828e4_36cc} & 
\lesim R^{C_p \delta_0}
(r_2 D_2)^{\frac{1}{2}(1-\beta_2)}
r_2^{\frac{\beta_2}{2}-2(\frac{1}{2}-\frac{1}{p})} 
r_2^{
-\frac{1}{2}(\frac{1}{2}-\frac{1}{p})
}
D_2^{
\frac{\beta_2}{2}
-3(\frac{1}{2}-\frac{1}{p})
}
\norm{f}_2\\
& 
\lesim 
R^{C_p \delta_0}
r_2^{
\frac12
-\frac{5}{2}(\frac{1}{2}-\frac{1}{p})
}
D_2^{
\frac{1}{2}
-3(\frac{1}{2}-\frac{1}{p})
}
\norm{f}_2.
\end{split}
\end{equation}
Note that the exponent of $D_2$ is always negative as $p>3$. 
From this, we see the exponent $p\ge 10/3$.

\subsection{Polynomial Wolff axioms}\label{230922subsection4_3}

Now let us focus on the improved range \eqref{230828e4_18qq}. This will rely on polynomial Wolff axioms. Recall the definition of the function $\Phi: \R^2\times \R\times \R^2\to \R^2$ in \eqref{230828e4_10}. For $\kappa\in (0, 1)$, we use $\theta$ to denote a dyadic square on $\R^2$ of side length $\kappa$. Moreover, we use $\xi_{\theta}$ to denote the center of $\theta$. For $v\in \R^2$ with $|v|\le 1$, define 
\begin{equation}
T_{\xi_{\theta}, v, \Phi}(\kappa, 1):= 
\{(x, t)\in \R^2\times \R: 
|x-\Phi(v, t; \xi_{\theta})|\le \kappa, \ |t|\le 1
\}.
\end{equation}
In $T_{\xi_{\theta}, v, \Phi}(\kappa, 1)$, we use $1$ to indicate that it is a (curved) tube of length one; it is a rescaled version of the tubes defined in \eqref{230828e4_12}. The main reason of rescaling the curved tubes is for our notation to be consistent with that in \cite[Section 3]{GWZ22}; our $T_{\xi_{\theta}, v, \Phi}(\kappa, 1)$ is precisely $T_{\xi_{\theta}, v, \Phi}(\delta, 1)$ in equation (3.2) in \cite{GWZ22}, and we are not using $\delta$ because it was used previously. \\

For a collection $\T$ of tubes $\{T_{\xi_{\theta}, v, \Phi}(\kappa, 1)\}$, we say that the tubes in $\T$ point in different direction if no any two tubes share the same $\xi_{\theta}$. 

\begin{theorem}\label{230831theorem4_6}
Let $S\subset \B^3$ be a semi-algebraic set of complexity $\le E$. Let $\T=\{T_{\xi_{\theta}, v, \Phi}(\kappa, 1)\}$ be a collection of tubes pointing in different directions.  Then 
\begin{equation}
\#\{T \in \mathbb{T}: T \subset S\} \lesssim_{\phi, E, \epsilon'} 
\mc{L}^3(S)
 \kappa^{-2-\epsilon'},
\end{equation}
for every $\epsilon'>0$. 
\end{theorem}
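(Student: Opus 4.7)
The plan is to reduce Theorem \ref{230831theorem4_6} to the polynomial Wolff axioms for straight tubes, by using H\"ormander's non-degeneracy condition (H1) to straighten the curved tubes via a local change of variables.

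First, I would reduce to the case where $\Phi$ is polynomial of bounded degree. Since the tubes have width $\kappa$, it suffices to approximate the smooth function $\Phi$ (defined via \eqref{230828e4_10}) by a polynomial $\tilde{\Phi}$ of degree $O_\phi(1)$ with $\|\Phi-\tilde{\Phi}\|_\infty\leq c\kappa$ on the bounded region of interest. The $\tilde{\Phi}$-tubes are then comparable to the original $\Phi$-tubes up to a constant factor in width, at the cost of only a constant loss in the final bound. With $\Phi$ polynomial, the family $\{T_{\xi_\theta,v,\Phi}(\kappa,1)\}$ becomes a semi-algebraic family in the parameters $(\xi_\theta,v)$ of complexity $O_\phi(1)$.

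Next, I would perform the local straightening. Fix a reference direction $\xi_0$. By (H1), the map $\Psi_{\xi_0}:(v,t)\mapsto(\Phi(v,t;\xi_0),t)$ is a semi-algebraic diffeomorphism of bounded degree onto its image in $\R^3$, with bounded Jacobian. Under $\Psi_{\xi_0}^{-1}$, tubes with direction exactly $\xi_0$ become straight tubes; tubes with nearby direction become low-degree semi-algebraic tubes uniformly close to straight. I would cover the direction space by finitely many caps $\{\tau\}$ of a small $\phi$-dependent constant size, and apply the local straightening $\Psi_{\xi_\tau}^{-1}$ on each cap; the transformed set $\Psi_{\xi_\tau}^{-1}(S)$ is semi-algebraic of complexity $O_{\phi,E}(1)$ with Lebesgue measure comparable to $|S|$. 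On each cap, applying the polynomial Wolff axiom (Guth--Zahl for straight tubes, or its extension by Katz--Rogers and Zahl to low-degree semi-algebraic tubes close to straight) yields $\#\{T\in\T:\xi_T\in\tau,\ T\subset S\}\lesssim_{\phi,E,\epsilon'}|S|\kappa^{-2-\epsilon'}$. Summing over the $O_\phi(1)$-many caps completes the proof.

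The main obstacle is the last step: invoking or proving a polynomial Wolff axiom that applies not only to exactly straight tubes but to the low-degree semi-algebraic tubes arising after the change of variables. Guth--Zahl handle the exactly-straight case via polynomial partitioning combined with Wongkew-type volume bounds; the needed extension to nearly-straight low-degree semi-algebraic tubes follows work of Katz--Rogers and Zahl, but verifying uniform complexity bounds for our specific family and ensuring constants depend only on $\phi$ and $E$ is the key technical step. A cleaner alternative would be to run the polynomial-method argument directly on the semi-algebraic family $\{T_{\xi,v,\Phi}(\kappa,1)\}$, using (H1) as the transversality input; this bypasses the need for the change of variables and gives a uniform complexity accounting from the outset.
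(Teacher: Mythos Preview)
There is a genuine gap: your argument never uses the finite contact order hypothesis on $\phi$, and this hypothesis is essential. Your proposed proof relies only on (H1), but (H1) (and even (H1)+(H2)) is \emph{not} sufficient for the polynomial Wolff axiom to hold. Bourgain's phase $\phi_0(x,t;\xi)=\langle x,\xi\rangle + t\xi_1\xi_2 + \tfrac12 t^2\xi_1^2$ satisfies both H\"ormander conditions, yet for $M=\mathrm{diag}(0,1)$ one computes $\det\bigl(M+\nabla_\xi^2\phi_0\bigr)\equiv 0$ for all $t$; geometrically, a full two-parameter family of tubes with pairwise different directions can be packed into the $\kappa$-neighbourhood of a fixed algebraic surface. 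This violates the bound you are trying to prove. Since your argument would apply verbatim to $\phi_0$, some step must fail.

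The failure is in the last step. After your straightening $\Psi_{\xi_0}^{-1}$, only the single direction $\xi_0$ becomes exactly straight; directions $\xi$ in the same cap yield genuinely curved tubes, and it is precisely the relation \emph{between different directions} that governs whether tubes can cluster in a variety. The Katz--Rogers/Zahl type statements you invoke for ``nearly straight low-degree semi-algebraic tubes'' do not hold in this generality without a quantitative transversality hypothesis across directions; (H1) gives no such information. Making the caps smaller does not help either: the degeneracy in Bourgain's example persists at every scale.

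For comparison, the paper's proof proceeds by quoting \cite[Theorem~3.1]{GWZ22}, which is a black-box polynomial Wolff axiom for curved tubes that holds \emph{provided} the non-degeneracy estimate
\[
\int_{|t|\le \epsilon_\phi}\bigl|\det\bigl(\nabla_v\Phi(v,t;\xi)\cdot M+\nabla_\xi\Phi(v,t;\xi)\bigr)\bigr|\,dt\ \gtrsim_\phi\ 1
\]
holds uniformly in $v,\xi$ and all $2\times 2$ matrices $M$. The entire content of the paper's proof is the verification of this integral lower bound, and this is exactly where the contact order hypothesis enters: after rewriting the integrand as $\det\bigl(M+\nabla_\xi^2\phi\bigr)$ and Taylor-expanding in $t$, the rank-$4$ condition on the contact-order matrix forces $\sum_{k'=1}^{k}|c_{k'}|\gtrsim 1$ for the coefficients, which gives the lower bound. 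Your proposal is missing precisely this ingredient.
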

\begin{proof}[Proof of Theorem \ref{230831theorem4_6}]
We will apply \cite[Theorem 3.1]{GWZ22}. To apply this theorem, it suffices to show that there exists $\epsilon_{\phi}>0$ such that 
\begin{equation}\label{230831e4_42}
\int_{|t|\le \epsilon_{\phi}}
\left|\operatorname{det}\left(\nabla_v \Phi(v, t; \xi) \cdot M+\nabla_{\xi} \Phi(v, t; \xi)\right)\right| \mathrm{d} t \gtrsim_{\phi} 1,
\end{equation}
for all $|v|\le \epsilon_{\phi}, |\xi|\le \epsilon_{\phi}$ and $2\times 2$ matrices $M$. In particular, the implicit constant is not allowed to depend on $v, \xi, M$. This is the step where we will use the assumption that $\phi$ is of contact order $\le k$ at the origin. \\

To prove \eqref{230831e4_42}, let us first prove that the notion of contact orders is stable under perturbations. To be more precise, we claim that the phase function $\phi$ is of contact order $\le k$ at $(x_0, t_0; \xi_0)$ whenever $|x_0|\le \epsilon_{\phi}, |t_0|\le \epsilon_{\phi}$ and $|\xi_0|\le \epsilon_{\phi}$. Denote 
\begin{equation}
\phi_0(x, t; \xi):=\phi(x+x_0, t+t_0; \xi+\xi_0)-\phi(x_0, t_0; \xi+\xi_0).
\end{equation}
Moreover, denote 
\begin{equation}\label{231015e4_44}
D_{ij}(t; x_0, t_0; \xi_0):= 
\partial_{\xi_i}
\partial_{\xi_j}
\phi_0(
X_0(t), t; 0
), \ \ 1\le i, j\le 2,
\end{equation}
and 
\begin{equation}
\mathfrak{D}(t; x_0, t_0; \xi_0):=
\det
\begin{bmatrix}
D_{11}(t; x_0, t_0; \xi_0), & D_{12}(t; x_0, t_0; \xi_0)\\
D_{21}(t; x_0, t_0; \xi_0), & D_{22}(t; x_0, t_0; \xi_0)
\end{bmatrix}
\end{equation}
Recall that our assumption is that $\phi$ is of contact order $\le k$ at $x_0=0, t_0=0, \xi_0=0$, that is, the matrix 
\begin{equation}\label{231015e4_46}
\begin{bmatrix}
\mathfrak{D}'(0; 0, 0; 0), & \mathfrak{D}''(0; 0, 0; 0), & \dots, & \mathfrak{D}^{(k)}(0; 0, 0; 0),\\
D_{11}'(0; 0, 0; 0), & D_{11}''(0; 0, 0; 0), & \dots, & D_{11}^{(k)}(0; 0, 0; 0)\\
D_{12}'(0; 0, 0; 0), & D_{12}''(0; 0, 0; 0), & \dots, & D_{12}^{(k)}(0; 0, 0; 0)\\
D_{22}'(0; 0, 0; 0), & D_{22}''(0; 0, 0; 0), & \dots, & D_{22}^{(k)}(0; 0, 0; 0)
\end{bmatrix}
\end{equation}
has rank $4$. The claim  follows from continuity because having full rank is stable under perturbations. \\

Now let us prove \eqref{230831e4_42}. Recall the definition of $\Phi$ from \eqref{230828e4_10} that 
\begin{equation}\label{230831e4_47}
\nabla_{\xi} \phi\pnorm{
\Phi(v, t; \xi), t; \xi
}=v.
\end{equation}
Taking derivatives in $v$ on both sides, we obtain 
\begin{equation}
\nabla_x \nabla_{\xi} \phi\pnorm{
\Phi(v, t; \xi), t; \xi
} \nabla_v \Phi(v, t; \xi)=I_{2\times 2}.
\end{equation}
Taking derivatives in $\xi$ on both sides of \eqref{230831e4_47}, we obtain 
\begin{equation}
\nabla_x \nabla_{\xi}\phi
\pnorm{
\Phi(v, t; \xi), t; \xi
} \nabla_{\xi} \Phi(v, t; \xi)+
\nabla_{\xi}^2 
\phi\pnorm{
\Phi(v, t; \xi), t; \xi
}=0.
\end{equation}
Therefore \eqref{230831e4_42} amounts to proving that 
\begin{equation}\label{231015e4_50}
\int_{|t|\le \epsilon_{\phi}}
\left|\operatorname{det}\left(M+\nabla^2_{\xi} 
\phi
\pnorm{
\Phi(v, t; \xi), t; \xi
}\right)\right| \mathrm{d} t \gtrsim_{\phi} 1,
\end{equation}
uniformly in $v, \xi$ and $M$. Write 
\begin{equation}\label{231015e4_51}
\begin{split}
& \operatorname{det}\left(M+\nabla^2_{\xi} 
\phi
\pnorm{
\Phi(v, t; \xi), t; \xi
}\right)\\
& =
\operatorname{det}\left(\widetilde{M}+\nabla^2_{\xi} 
\phi
\pnorm{
\Phi(v, t; \xi), t; \xi
}
-\nabla^2_{\xi} 
\phi
\pnorm{
\Phi(v, 0; \xi), 0; \xi
}
\right),
\end{split}
\end{equation}
for some new $2\times 2$ matrix
\begin{equation}
\widetilde{M}=
\begin{bmatrix}
\widetilde{m}_{11}, & \widetilde{m}_{12}\\
\widetilde{m}_{21}, & \widetilde{m}_{22}
\end{bmatrix}
\end{equation}
 We take $t_0=0$ in \eqref{231015e4_44}, $x_0$ in \eqref{231015e4_44} satisfying 
\begin{equation}
\nabla_{\xi} \phi(x_0, 0; \xi)=v,
\end{equation}
and obtain 
\begin{equation}
\begin{split}
\eqref{231015e4_51}& = 
\mathfrak{D}(t; x_0, 0; \xi)+ \widetilde{m}_{11} D_{22}(t; x_0, 0; \xi)+ 
\widetilde{m}_{22} D_{11}(t; x_0, 0; \xi)\\
&-(\widetilde{m}_{12}+ \widetilde{m}_{21}) D_{12}(t; x_0, 0; \xi)+\det(\widetilde{M}).
\end{split}
\end{equation}
Let us write 
\begin{equation}\label{231015e4_55}
\operatorname{det}\left(M+\nabla^2_{\xi} 
\phi
\pnorm{
\Phi(v, t; \xi), t; \xi
}\right)
= c_0+ \sum_{k'=1}^k c_{k'} t^{k'}+ \text{ higher order terms},
\end{equation}
where $c_{k'}$ are constants that depend on the choice of $v, \xi$ and $M$. From \eqref{231015e4_46} we know that 
\begin{equation}
\sum_{k'=1}^k |c_{k'}|\gtrsim_{\phi} 1+\|(
\widetilde{m}_{11}, \widetilde{m}_{22}, \widetilde{m}_{12}+ \widetilde{m}_{21}
)\|_{1},
\end{equation}
where 
\begin{equation}
\|(m_1, m_2, m_3)\|_{1}:=|m_1|+|m_2|+|m_3|,
\end{equation}
for $m_1, m_2, m_3\in \R$. By dividing the coefficients in \eqref{231015e4_55} by a constant, we see that the desired bound \eqref{231015e4_50} follows from 
\begin{claim}\label{231015claim4_7}
Let $k\ge 1$ be an integer. 
Let $W: \R\to \R$ be a smooth function of the form
\begin{equation}\label{231015e4_58}
c_0+\sum_{k'=1}^k c_{k'}t^{k'} +O_{\phi, k}(t^{k+1})
\end{equation}
satisfying 
\begin{equation}\label{231015e4_59}
\sum_{k'=1}^k |c_{k'}|\gtrsim_{\phi, k} 1,
\end{equation}
where the implicit constant in \eqref{231015e4_58} depends on $\phi$ and $k$. Then there exists $\epsilon_{\phi, k}$ such that 
\begin{equation}\label{231015e4_60}
\int_{
|t|\le \epsilon_{\phi, k} 
}|W(t)|dt\gtrsim_{\phi, k} 1.
\end{equation}
\end{claim}
\begin{proof}[Proof of Claim \ref{231015claim4_7}]
The proof is almost immediate. From \eqref{231015e4_59} we can conclude that there exist $\epsilon_{\phi, k}$ and $k''\in \{0, 1, \dots, k\}$ such that 
\begin{equation}
|W^{(k'')}(t)|\gtrsim_{\phi, k} 1,
\end{equation}
for all $|t|\le \epsilon_{\phi, k}$, which implies that $|W(t)|$ can not stay close to $0$ for too long time, and further implies the desired lower bound \eqref{231015e4_60}. 
\end{proof}

Claim \ref{231015claim4_7} implies the lower bound \eqref{231015e4_50}, and we have therefore finished the proof of Theorem \ref{230831theorem4_6}.

\end{proof}

Let us state a corollary of Theorem \ref{230831theorem4_6}. Recall the outputs of the polynomial partitioning algorithm. In particular, recall that $r_2=\rho(\mathfrak{S}_2)$, that elements in $\mathfrak{S}_2$ are of the form 
\begin{equation}\label{230827e4_24zzz}
B_{r_2}\cap \mc{N}_{
r_2^{1/2+\delta_2}
}(S_2),
\end{equation}
where $B_{r_2}\subset \B^3_R$ is a ball of radius $r_2$ and $S_2$ is an algebraic variety of dimension two, and that the function $f^*_{\iota, S_2}$ is built with a collection of tubes from $\T[B_{r_2}]$ that are contained in \eqref{230827e4_24zzz}. 
\begin{corollary}\label{230902corollary4_7}
Assume that $r_2=R$. Then for every $S_2$, it holds that 
\begin{equation}\label{230902e4_51cc}
\norm{
f^*_{\iota, S_2}
}_2^2 \lesim_{\phi, \epsilon}
r_2^{-1/2+\delta_0} 
\norm{f}_{\infty}^2.
\end{equation}
Here the relation for $\delta_2, \delta_0, \epsilon$ are given in \eqref{230902e4_19}, and the implicit constant depends on $\epsilon$. 
\end{corollary}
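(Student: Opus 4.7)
The plan is to combine approximate orthogonality of wave packets with the polynomial Wolff axioms of Theorem \ref{230831theorem4_6}. By Output 4 of the polynomial partitioning algorithm, $f^*_{\iota, S_2} = \sum_{T \in \mathbb{T}'} f_T$ for some sub-collection $\mathbb{T}' \subset \T[B_{r_2}]$ whose associated curved tubes are all contained in $S_2$. Writing each $T = T_{\theta,v}$ and using that the $L^2$ inner products $\langle f_{\theta,v}, f_{\theta',v'}\rangle$ are negligible when $(\theta,v) \ne (\theta',v')$, I obtain
\begin{equation*}
\norm{f^*_{\iota, S_2}}_2^2 \lesim \sum_{\theta \in \Theta^*} \sum_{v:\, T_{\theta,v} \subset S_2} \norm{f_{\theta, v}}_2^2,
\end{equation*}
where $\Theta^* := \{\theta \in \Theta_{r_2} : \exists v,\; T_{\theta,v} \subset S_2\}$. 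For each fixed $\theta$, Parseval applied to the Fourier series expansion of $f\psi_\theta$ gives $\sum_v \norm{f_{\theta,v}}_2^2 \lesim \norm{f \psi_\theta}_2^2$, and the $L^\infty$ hypothesis on $f$ together with the area estimate $\mc{L}^2(\mathrm{supp}\, \psi_\theta) \lesim r_2^{-1}$ yields $\norm{f \psi_\theta}_2^2 \lesim r_2^{-1} \norm{f}_\infty^2$.

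It remains to show $\#\Theta^* \lesim r_2^{1/2+\delta_0}$. After rescaling $B_{r_2} = B_R$ to the unit ball $\B^3$ via $\bfx \mapsto \bfx/R$ (this is where the hypothesis $r_2 = R$ is used), the tubes $T_{\theta,v}$ become curved tubes $T_{\xi_\theta,\, v/R,\, \Phi}(\kappa, 1)$ of length $1$ and width $\kappa = r_2^{-1/2+\delta}$, while $S_2 = B_{r_2} \cap \mc{N}_{r_2^{1/2+\delta_2}}(S_2')$ — where $S_2'$ is a $2$-dimensional algebraic variety of bounded degree produced by the partitioning step — rescales to a neighborhood of thickness $r_2^{-1/2+\delta_2}$ around a $2$-dimensional algebraic variety in $\B^3$ of bounded degree. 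Such a set is semi-algebraic with complexity uniformly bounded in $r_2$, and its Lebesgue measure is $\lesim r_2^{-1/2+\delta_2}$. Selecting one representative tube per direction $\theta \in \Theta^*$ and applying Theorem \ref{230831theorem4_6} yields
\begin{equation*}
\#\Theta^* \lesim r_2^{-1/2+\delta_2} \cdot (r_2^{-1/2+\delta})^{-2-\epsilon'} \lesim r_2^{1/2+\delta_0},
\end{equation*}
where the second inequality uses the parameter hierarchy \eqref{230902e4_19} to absorb $\delta_2$, $\delta$, and $\epsilon'$ into $\delta_0$. Combining the two displays produces the desired bound $\norm{f^*_{\iota, S_2}}_2^2 \lesim r_2^{-1/2+\delta_0} \norm{f}_\infty^2$.

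I expect the main technical point to be verifying that the rescaled thickened variety can be realized as a semi-algebraic set whose complexity is bounded independently of $r_2$, so that the implicit constant in Theorem \ref{230831theorem4_6} does not degenerate as $r_2 \to \infty$; this should follow from a standard Tarski–Seidenberg argument once the degrees of the polynomials arising in the partitioning algorithm are controlled. A secondary routine task is the careful handling of the rapidly decaying Schwartz tails of the wave packets $f_{\theta, v}$ outside the nominal support of $\tilde\psi_\theta$, which must be absorbed into the $R^{C_p \delta_0}$-type losses implicit in the statement.
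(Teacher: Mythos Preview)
Your proposal follows essentially the same route as the paper: orthogonality of wave packets to reduce to a direction count, then the polynomial Wolff axiom (Theorem~\ref{230831theorem4_6}) applied to the rescaled $r_2^{-1/2+\delta_2}$-neighborhood of the degree-$O(d)$ surface to bound $\#\Theta^*$. One small point: you treat $f^*_{\iota,S_2}$ as a literal sub-sum of wave packets of the original $f$, which is not quite what Output~4 guarantees (the function is produced by the iterative algorithm, and the wave packets are of intermediate functions); the paper handles this by invoking the black-box orthogonality estimate \eqref{230828e4_31} rather than Parseval on $f\psi_\theta$, which is exactly the statement needed to pass from $\|h_\theta\|_2^2$ back to $\|f\|_{L^2_{\mathrm{avg}}(\theta)}^2$.
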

\begin{proof}[Proof of Corollary \ref{230902corollary4_7}]
Write $h:= f^*_{\iota, S_2}$. Let us write $h$ using the wave packet decomposition in Subsection \ref{230902subsection4_1}, 
\begin{equation}
h=\sum_{\theta} h_{\theta}, \ \ h_{\theta}:=\sum_{v} h_{\theta, v},
\end{equation}
where $\theta$ denotes a frequency square of side length $r_2^{-1/2}$. By the orthogonality estimate in \eqref{230828e4_31}, 
\begin{equation}\label{230902e4_54yy}
\norm{h}_2^2 \lesim \sum_{\theta} \norm{h_{\theta}}_2^2 \lesim 
r_2^{-1}
\sum_{\theta}
\norm{f}_{L^2_{\mathrm{avg}}(\theta)}^2,
\end{equation}
where the factor $r_2^{-1}$ comes from taking averaged integral over $\theta$. By Theorem \ref{230831theorem4_6}, the number of $\theta$ that contributes to the above sum is $\lesim_{\epsilon} r_2^{1/2+\delta_0}$. This, combined with \eqref{230902e4_54yy}, implies the desired bound. 
\end{proof}

Before we proceed to polynomial Wolff axioms at general scales (general $r_2$ instead of $r_2=R$), let us first see why \eqref{230902e4_51cc} is more favorable compared with previously used estimates in \eqref{230828e4_36cc}. In \eqref{230828e4_36cc}, we combined \eqref{230828e4_34}, \eqref{230828e4_30}, and obtained 
\begin{equation}\label{230902e4_54}
\max _{O \in 
\mfn^*_{\ell_0}
}\left\|f_{\iota, O}\right\|_2^2 
\lesssim
R^{C_p \delta_0}
r_2^{-\frac{1}{2}} D_{1}^{-1}
D_2^{-2} \norm{f}_2^2.
\end{equation}
Now we replace \eqref{230828e4_30} by \eqref{230902e4_51cc}, and obtain 
\begin{equation}\label{230902e4_55}
\begin{split}
\max _{O \in 
\mfn^*_{\ell_0}
}\left\|f_{\iota, O}\right\|_2^2 
& 
\lesssim
R^{C_p \delta_0}
r_2^{-\frac{1}{2}} D_{1}^{-1}
\max_{S_2\in 
\mathfrak{S}_2
}
\left\|f^*_{\iota, S_2}\right\|_2^2 \\
& 
\lesssim
R^{C_p \delta_0}
r_2^{-\frac{1}{2}} D_{1}^{-1} r_2^{-\frac12} \norm{f}_{\infty}^2.
\end{split}
\end{equation}
Recall the relation $r_2 D_2\le R$ in \eqref{230828e4_26cc}. In other words, if $r_2\simeq R$, then $D\simeq 1$. In this case, the bound \eqref{230902e4_55} is much better than the bound \eqref{230902e4_54}; the trade-off is that $\norm{f}_2$ in \eqref{230902e4_54} is replaced by $\norm{f}_{\infty}$, which we can afford. \\

Next, let us try to understand rescaled versions of the above polynomial Wolff axioms. More precisely, we will prove an analog of Corollary \ref{230902corollary4_7} for general $r_2\le R$. 
\begin{proposition}\label{230902prop4_8}
For every $S_2\in \mathfrak{S}_2$ and $r_2=\rho(\mathfrak{S}_2)$, it holds that 
\begin{equation}
\norm{
f^*_{\iota, S_2}
}_2^2 \lesim_{\phi, \epsilon}
\min\set{
\pnorm{
\frac{N}{r_2}
}^{k-2}
r_2^{-1/2},
\pnorm{
r_2^{-1/2}+\frac{r_2}{N}
}
}
(r_2)^{\delta_0}
\norm{f}_{\infty}^2.
\end{equation}
Here $\delta_0$ and $\epsilon$ are the same as in Corollary \ref{230902corollary4_7}. 
\end{proposition}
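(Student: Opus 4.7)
The plan is to extend the argument of Corollary \ref{230902corollary4_7}, which treats only the top scale $r_2=R$, to arbitrary $1\le r_2\le R$, producing two complementary bounds whose minimum gives the proposition. First, the wave-packet reduction from that corollary goes through verbatim: with $h:=f^*_{\iota,S_2}$ and the orthogonality estimate \eqref{230828e4_31}, one obtains
\[
\|h\|_2^2 \;\lesim\; r_2^{-1}\cdot\#\Theta_{\mathrm{act}}\cdot \|f\|_\infty^2,
\]
where $\Theta_{\mathrm{act}}$ collects the frequency caps $\theta$ of side length $r_2^{-1/2}$ supporting some tube $T_{\theta,v}\in\T[B_{r_2}]$ contained in $\mc{N}_{r_2^{1/2+\delta_2}}(S_2)\cap B_{r_2}$. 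Both bounds will follow from corresponding estimates for $\#\Theta_{\mathrm{act}}$.

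Next apply the parabolic rescaling $(x,t)\mapsto(\lambda x,\lambda t)$ with $\lambda:=N/r_2$ to the $\Phi$-coordinate picture. Under this rescaling each $T_{\theta,v}$ becomes a length-$1$ tube of radius $\kappa:=r_2^{-1/2+\delta}$ associated with the rescaled phase $\phi^{\mathrm{new}}(x,s;\xi):=\phi(x/\lambda,s/\lambda;\xi)$, equivalently with $\Phi^{\mathrm{new}}(v,s;\xi)=\lambda\,\Phi(v,s/\lambda;\xi)$, while $\mc{N}_{r_2^{1/2+\delta_2}}(S_2)\cap B_{r_2}$ becomes a semi-algebraic set of bounded complexity and volume $\lesim r_2^{-1/2+\delta_2}$. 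Thus the task reduces to counting these rescaled tubes inside the rescaled set.

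For the first bound $(N/r_2)^{k-2}r_2^{-1/2}$ the idea is to re-run the proof of Theorem \ref{230831theorem4_6} applied to $\phi^{\mathrm{new}}$, carefully tracking the dependence of the implicit constants on the non-degeneracy integral \eqref{230831e4_42}. The change of variables $t=s/\lambda$ rewrites the rescaled non-degeneracy integral as $\lambda\int_{|t|\le 1/\lambda}|\det(M+\nabla_\xi^2\phi(\Phi(v,t;\xi),t;\xi))|\,dt$. The contact-order-$\le k$ hypothesis at the origin (stable under perturbation, as in the proof of Theorem \ref{230831theorem4_6}) together with Claim \ref{231015claim4_7} supplies a Taylor-expansion lower bound for this inner integral with explicit dependence on $\lambda^{-1}$, yielding a non-degeneracy constant of an appropriate negative power of $\lambda$ for $\phi^{\mathrm{new}}$. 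Propagating this weakened constant through \cite[Theorem 3.1]{GWZ22} ultimately costs a factor of $(N/r_2)^{k-2}$ in the tube count, yielding $\#\Theta_{\mathrm{act}}\lesim (N/r_2)^{k-2}\,r_2^{1/2+\delta_0}$, which inserted into the preceding display gives the first bound.

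For the second bound $r_2^{-1/2}+r_2/N$ we exploit the perturbative nature of the rescaled problem in the regime $r_2\ll N$. From the normal form $\phi(x,t;\xi)=x\cdot\xi+t\langle\xi,A\xi\rangle+O(|t||\xi|^3+|\bfx|^2|\xi|^2)$, the rescaled central curve $s\mapsto\Phi^{\mathrm{new}}(v,s;\xi)$ deviates from the straight line $s\mapsto\lambda v-2sA\xi$ of the unperturbed phase by $O(r_2/N)$ in the $C^\infty$ norm on $|s|\le 1$. Consequently each rescaled curved tube lies in an $O(\kappa+r_2/N)$-neighborhood of a straight tube, to which Guth's polynomial Wolff axiom for straight lines \cite{Gut18} applies and yields $\#\Theta_{\mathrm{act}}\lesim r_2^{1/2+\delta_0}+r_2^{2+\delta_0}/N$.

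The main obstacle is the quantitative bookkeeping in the first step. One has to verify that the implicit constants in \cite[Theorem 3.1]{GWZ22} degrade in a controlled way as the non-degeneracy constant of the phase shrinks, so that the eventual loss exponent is exactly $k-2$ (rather than, say, $k$). The contact-order hypothesis enters only through the Taylor-expansion input to Claim \ref{231015claim4_7}; the delicate part is translating this analytic input into a sharp polynomial-Wolff-axiom bound with the cleanly stated $(N/r_2)^{k-2}$ dependence.
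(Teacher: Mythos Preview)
Your proposal follows essentially the same route as the paper's proof. Both arguments rescale the tubes to unit length (introducing the parameter $N_2=N/r_2$), reduce via orthogonality to a direction-counting problem, and then prove two separate polynomial Wolff estimates for the rescaled tubes: one by tracking the degradation of the non-degeneracy integral \eqref{230831e4_42} to $\gtrsim N_2^{-(k-2)}$ under rescaling (exactly the bookkeeping you flag as the obstacle, and which the paper handles via Claim~\ref{231015claim4_7}), and one by approximating the rescaled curved tubes by straight tubes with error $O(N_2^{-1})$ and invoking the Wolff axiom for those.

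One small imprecision on the second bound: the rescaled central curve does \emph{not} deviate from the straight line $s\mapsto\lambda v-2sA\xi$ by $O(r_2/N)$. The cubic term $tQ_3(\xi)$ in the normal form (the $O(|t||\xi|^3)$ part) is linear in $t$ and therefore does not scale down; it contributes $-s\nabla Q_3(\xi)$ to the central curve, an $O(|\xi|^2)$ discrepancy independent of $N_2$. The paper fixes this by comparing instead to the translation-invariant phase $\widetilde\phi_{N_2}(x,t;\xi)=x\cdot\xi+\tfrac12 t|\xi|^2+tQ_3(\xi)$, whose tubes are still straight lines (now with direction map $\xi\mapsto \xi+\nabla Q_3(\xi)$, still a local diffeomorphism). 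The polynomial Wolff axiom applies to these straight tubes, and the deviation of the true curved tubes from them is then genuinely $O(N_2^{-1})$, yielding the count $\lesim \mc{L}^3(S_{N_2^{-1}})\kappa^{-2-\epsilon'}$. With this adjustment your argument goes through and matches the paper.
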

\begin{proof}[Proof of Proposition \ref{230902prop4_8}]
To simplify notation, let us without loss of generality assume that $S_2$ is contained in the ball $\B^3_{r_2}\subset \B^3_R$, that is, the ball of radius $r_2$ centered at the origin. In this case, wave packets are defined in \eqref{230828e4_12}, that is 
\begin{equation}\label{230828e4_12kk}
T_{\theta, v}:=
\Big\{(x, t): 
\anorm{
\frac{x}{N}-\Phi\pnorm{
\frac{v}{N}, \frac{t}{N}; \xi_{\theta}
}
}\le 
\frac{r_2^{1/2+\delta}}{N}, \ t\in [0, r]
\Big\}.
\end{equation}
Note that $T_{\theta, v}$ is a curved tube of length $r_2$. Let us rescale it by a factor of $r_2$, and consider everything at the unit scale: 
\begin{equation}\label{230902e4_58}
T^{\circ}_{\theta, v}:=
\Big\{(x, t): 
\anorm{
\frac{r_2 x}{N}-\Phi\pnorm{
\frac{r_2 v}{N}, \frac{rt}{N}; \xi_{\theta}
}
}\le 
\frac{r_2^{1/2+\delta}}{N}, \ t\in [0, 1]
\Big\}.
\end{equation}
Rewrite the defining equation for $T^{\circ}_{\theta, v}$ in \eqref{230902e4_58} as
\begin{equation}
\anorm{
x-
N_2
\Phi\pnorm{
\frac{v}{N_2}, \frac{t}{N_2}; \xi_{\theta}
}
}\le 
\frac{r^{1/2+\delta}}{r},
\end{equation}
where $N_2:=N/r_2$.  Denote 
\begin{equation}
\Phi_{N_2}(v, t; \xi):= 
N_2
\Phi\pnorm{
\frac{v}{N_2}, \frac{t}{N_2}; \xi
}.
\end{equation}
 Recall that $\Phi$ satisfies equation \eqref{230831e4_47}, that is, 
\begin{equation}\label{230902e4_60}
\nabla_{\xi} \phi\pnorm{
\Phi(v, t; \xi), t; \xi
}=v,
\end{equation}
which be written as 
\begin{equation}\label{231016e4_74}
\frac{v}{N_2}=
\nabla_{\xi} \phi 
\pnorm{
\frac{1}{N_2} \Phi_{N_2}(v, t; \xi), \frac{t}{N_2}; \xi
}=
\nabla_{\xi}
\phi_{N_2}(
\Phi_{N_2}(v, t; \xi), t; \xi
),
\end{equation}
with 
\begin{equation}\label{231016e4_75}
\phi_{N_2}(x, t; \xi):=
N_2 
\phi\pnorm{
\frac{x}{N_2}, \frac{t}{N_2}; \xi
}.
\end{equation}
Denote 
\begin{equation}
T_{\xi_{\theta}, v, \Phi_{N_2}}(\kappa, 1):= 
\{(x, t)\in \R^2\times \R: 
|x-\Phi_{N_2}(v, t; \xi_{\theta})|\le \kappa, \ |t|\le 1
\}.
\end{equation}
Similarly to Theorem \ref{230831theorem4_6}, we have 
\begin{claim}\label{230902claim4_9}
Let $S\subset \B^3$ be a semi-algebraic set of complexity $\le E$. Let $\T=\{T_{\xi_{\theta}, v, \Phi_{N_2}}(\kappa, 1)\}$ be a collection of tubes pointing in different directions.  Then 
\begin{equation}
\#\{T \in \mathbb{T}: T \subset S\} \lesssim_{\phi, E, \epsilon} 
\min\{
\mc{L}^3(S) (N_2)^{k-2}, \mathcal{L}^3(S_{N_2^{-1}})
\}
\kappa^{-2-\epsilon'},
\end{equation}
for every $\epsilon'>0$, where $k$ is the contact order of $\phi$ and $S_{N_2^{-1}}$ denotes the $N_2^{-1}$ neighborhood of $S$. 
\end{claim}
\begin{proof}[Proof of Claim \ref{230902claim4_9}]
The proof of the upper bound
\begin{equation}
\#\{T \in \mathbb{T}: T \subset S\} \lesssim_{\phi, E, \epsilon} 
\mc{L}^3(S) (N_2)^{k-2}
\kappa^{-2-\epsilon'}
\end{equation}
 is essentially the same as that of Theorem \ref{230831theorem4_6}. The key difference is that \eqref{230831e4_42} is no longer true anymore, and instead we have 
\begin{equation}\label{230831e4_66}
\int_{|t|\le \epsilon_{\phi}}
\left|\operatorname{det}\left(\nabla_v \Phi_{N_2}(v, t; \xi) \cdot M+\nabla_{\xi} \Phi_{N_2}(v, t; \xi)\right)\right| \mathrm{d} t \gtrsim_{\phi} 
(N_2)^{-k+2},
\end{equation}
uniformly in $v, \xi$ and $2\times 2$ matrices $M$. With \eqref{230831e4_66} in hand, one can repeat the proof of Theorem 3.1 in \cite{GWZ22}, and obtain Claim \ref{230902claim4_9}.\\

It remains to prove \eqref{230831e4_66}. Let us first write it using $\phi_{N_2}$. By \eqref{231016e4_74} and \eqref{231016e4_75}, it is equivalent to prove that 
\begin{equation}\label{230831e4_66zz}
\int_{|t|\le \epsilon_{\phi}}
\left|\operatorname{det}\left(M+\nabla^2_{\xi} \phi_{N_2}\Big(
\Phi_{N_2}(v, t; \xi)
, t; \xi
\Big)\right)\right| \mathrm{d} t \gtrsim_{\phi} 
(N_2)^{-k+2},
\end{equation}
uniformly in $v, \xi$ and $2\times 2$ matrices $M$. By using the stability phenomenon that we observed in \eqref{231015e4_46}, we can without loss of generality assume that we are working with $v=0, \xi=0$. Under this simplification, we have 
\begin{equation}
\Phi_{N_2}(0, t; 0)=0, \ \ \forall t,
\end{equation}
as $\phi$ is of a normal form, and therefore \eqref{230831e4_66zz} becomes 
\begin{equation}\label{230831e4_66zzz}
\int_{|t|\le \epsilon_{\phi}}
\left|\operatorname{det}\left(M+\nabla^2_{\xi} \phi_{N_2}(
0
, t; 0)\right)\right| \mathrm{d} t \gtrsim_{\phi} 
(N_2)^{-k+2},
\end{equation}
which follows from Claim \ref{231015claim4_7}. \\

Let us turn to the other upper bound. We without loss of generality assume that 
\begin{equation}
\phi(\bfx; \xi)=\inn{x}{\xi}+\frac12 t |\xi|^2 + \psi(\bfx; \xi),\ \ \bfx=(x, t),
\end{equation}
where 
\begin{equation}
\psi(\bfx; \xi)=O(|t||\xi|^3+ |\bfx|^2 |\xi|^2).
\end{equation}
Under this form, we have 
\begin{equation}
\phi_{N_2}(\bfx; \xi)=
\inn{x}{\xi}+\frac12 t|\xi|^2
+ 
N_2 \psi(
\frac{x}{N}, \frac{t}{N}; \xi
).
\end{equation}
Let us write 
\begin{equation}
\psi(\bfx; \xi)=t Q_3(\xi)+ \psi_2(\bfx; \xi),
\end{equation}
with 
\begin{equation}
Q_3(\xi)=O(|\xi|^3), \ \ \ 
\psi_2(\bfx; \xi)=
O(|\bfx|^2|\xi|^2).
\end{equation}
We have 
\begin{equation}
\phi_{N_2}(\bfx; \xi)=
\inn{x}{\xi}+\frac12 t|\xi|^2
+ 
tQ_3(\xi)+ 
N_2 \psi_2(
\frac{x}{N}, \frac{t}{N}; \xi
).
\end{equation}
The tube $T_{\xi_{\theta}, v, \Phi_{N_2}}(\kappa, 1)$ is given by the $\kappa$ neighborhood of the curve 
\begin{equation}\label{231022e4_87}
\{(x, t): \nabla_{\xi}
\phi_{N_2}(x, t; \xi_{\theta})= v\}.
\end{equation}
Define 
\begin{equation}
\widetilde{\phi}_{N_2}(\bfx; \xi)=
\inn{x}{\xi}+\frac12 t|\xi|^2
+ 
tQ_3(\xi).
\end{equation}
Define the tube $\widetilde{T}_{\xi_{\theta}, v, \Phi_{N_2}}(\kappa, 1)$ to be the $\kappa$ neighborhood of the curve 
\begin{equation}\label{231022e4_89}
\{(x, t): \nabla_{\xi}
\widetilde{\phi}_{N_2}(x, t; \xi_{\theta})= v\}.
\end{equation}
Take a point $(x, t)$ from the curve \eqref{231022e4_87} and a point $(x', t)$ from the curve \eqref{231022e4_89}. By adding a zero, we obtain 
\begin{equation}
\nabla_{\xi} \phi_{N_2}(x, t; \xi_{\theta})-\nabla_{\xi} \phi_{N_2}(x', t; \xi_{\theta})+ \nabla_{\xi} \phi_{N_2}(x', t; \xi_{\theta})-\nabla_{\xi} \widetilde{\phi}_{N_2}(x', t; \xi_{\theta})=0.
\end{equation}
By H\"ormander's non-degeneracy condition, we know that 
\begin{equation}
|\nabla_{\xi} \phi_{N_2}(x, t; \xi_{\theta})-\nabla_{\xi} \phi_{N_2}(x', t; \xi_{\theta})| \simeq |x-x'|.
\end{equation}
Moreover, 
\begin{equation}\label{231022e4_92}
|\nabla_{\xi} \phi_{N_2}(x', t; \xi_{\theta})-\nabla_{\xi} \widetilde{\phi}_{N_2}(x', t; \xi_{\theta})|\lesim N_2^{-1}.
\end{equation}
As a consequence, we know that the tube $T_{\xi_{\theta}, v, \Phi_{N_2}}(\kappa, 1)$ is in the $N_2^{-1}$ neighborhood of the tube $\widetilde{T}_{\xi_{\theta}, v, \Phi_{N_2}}(\kappa, 1)$.
\begin{claim}\label{231022claim4_11}
Let $S\subset \B^3$ be a semi-algebraic set of complexity $\le E$. Let $\widetilde{\T}=\{\widetilde{T}_{\xi_{\theta}, v, \Phi_{N_2}}(\kappa, 1)\}$ be a collection of tubes pointing in different directions.  Then 
\begin{equation}
\#\{\widetilde{T} \in \widetilde{\mathbb{T}}: \widetilde{T} \subset S\} \lesssim_{\phi, E, \epsilon} 
\mc{L}^3(S)
\kappa^{-2-\epsilon'},
\end{equation}
for every $\epsilon'>0$. 
\end{claim}
\begin{proof}[Proof of Claim \ref{231022claim4_11}]
The proof of Claim \ref{231022claim4_11} is the same as that of Theorem \ref{230831theorem4_6}. Let us give a very brief sketch here. The Hessian of $\widetilde{\phi}_{N_2}$ in the $\xi$ variables is 
\begin{equation}
\begin{bmatrix}
t+ t\partial_{\xi_1}\partial_{\xi_1} Q_3(\xi), & t\partial_{\xi_1}\partial_{\xi_2} Q_3(\xi)\\
t\partial_{\xi_2}\partial_{\xi_1} Q_3(\xi), & t+ t\partial_{\xi_2}\partial_{\xi_2} Q_3(\xi)
\end{bmatrix}
\end{equation}
From this, one can check easily that an analogue of \eqref{231015e4_50} holds, that is, 
\begin{equation}
\int_{|t|\le \epsilon_{\phi}}
\left|\operatorname{det}\left(M+\nabla^2_{\xi} 
\widetilde{\phi}_{N_2}
(
0, t; \xi
)\right)\right| \mathrm{d} t \gtrsim_{\phi} 1,
\end{equation}
uniformly in $M$ and $\xi$. Here in the argument of $\widetilde{\phi}_{N_2}$ we simply set $x=0$ because the Hessian of $\widetilde{\phi}_{N_2}$ in $\xi$ is constant in $x$ anyway. This finishes the proof of Claim \ref{231022claim4_11}. 
\end{proof}

As a consequence of \eqref{231022e4_92} and Claim \ref{231022claim4_11}, we obtain that 
\begin{equation}
\#\{
T\in \T: T\subset S
\}
\lesim_{
\phi, E, \epsilon
}
\mathcal{L}^3(S_{N_2^{-1}}) \kappa^{-2-\epsilon'},
\end{equation}
for every $\epsilon'>0$, where $S_{N_2^{-1}}$ denotes the $N_2^{-1}$ neighborhood of $S$. This finishes the proof of Claim \ref{230902claim4_9}. \end{proof}

Once we prove Claim \ref{230902claim4_9}, the desired estimate in the proposition is immediate, and the argument is exactly the same as in the proof of Corollary \ref{230902corollary4_7}. We apply Claim \ref{230902claim4_9} with $\mc{L}^3(S)=\kappa=r_{2}^{-1/2}$, and see that wave packets that are contained in $S_2$ point in at most 
\begin{equation}
\min
\set{
(N_2)^{k-2} r_2^{-1/2}, \pnorm{
r_2^{-1/2}+ \frac{r_2}{N}
}
} (r_2)^{1+\delta_0}
\end{equation}
 many different directions. This finishes the proof of the proposition. 

\end{proof}

Now we have all the tools to finish the proof of Theorem \ref{230826theorem4_2}. The starting point is again to apply \eqref{230828e4_33} and \eqref{230828e4_34}, which we write down again:
\begin{equation}\label{230902e4_67}
\begin{split}
& \|
T^N f
\|_{\mathrm{BL}_{A_0}^p\left(\B^3_R\right)}
\lesim
R^{C_p \delta_0}
(r_2 D_2)^{\frac{1}{2}(1-\beta_2)}
D_1^{\frac{\beta_2}{p_2}} D_2^{\frac{\beta_2}{p_2}}
\norm{f}_2^{\frac{2}{p_3}} 
\max_{O\in 
\mfn^*_{\ell_0}
} \norm{f_{\iota, O}}_2^{1-\frac{2}{p_3}}\\
& \lesim
R^{C_p \delta_0}
(r_2 D_2)^{\frac{1}{2}(1-\beta_2)}
D_1^{\frac{\beta_2}{p_2}} D_2^{\frac{\beta_2}{p_2}}
\norm{f}_2^{\frac{2}{p_3}} 
r_2^{-\frac{1}{2}(
\frac{1}{2}-\frac{1}{p_3}
)} D_{1}^{-(
\frac{1}{2}-\frac{1}{p_3}
)}
\max_{S_2\in 
\mathfrak{S}_2
}\left\|f^*_{\iota, S_2}\right\|_2^{1-\frac{2}{p_3}}.
\end{split}
\end{equation}
%
To control the right hand side, recall that in \eqref{230828e4_30}, we already proved 
\begin{equation}\label{230902e4_68}
\max_{S_2
\in \mathfrak{S}_2
} \norm{f^*_{\iota, S_2}}_2^2 \lesim R^{C_p \delta_0}
D_2^{-2} \norm{f}_2^2 \lesim R^{C_p \delta_0}
D_2^{-2} \norm{f}_{\infty}^2,
\end{equation}
where in the last step we used H\"older's inequality. Moreover, Proposition \ref{230902prop4_8} says that 
\begin{equation}\label{230902e4_69}
\norm{
f^*_{\iota, S_2}
}_2^2 \lesim
\min\set{
\pnorm{
\frac{N}{r_2}
}^{k-2}
r_2^{-1/2},
\pnorm{
r_2^{-1/2}+\frac{r_2}{N}
}
}
r_2^{\delta_0} 
\norm{f}_{\infty}^2,
\end{equation}
uniformly in $S_2$. Before we continue, let us simplify the right hand side of \eqref{230902e4_69}. We discuss two cases 
\begin{equation}
r_2\le N^{2/3}, \text{ or } r_2\ge N^{2/3}.
\end{equation}
In the former case, we have 
\begin{equation}\label{230902e4_69gg}
\norm{
f^*_{\iota, S_2}
}_2^2 \lesim
r_2^{-1/2}
r_2^{\delta_0} 
\norm{f}_{\infty}^2.
\end{equation}
In the latter case, we have 
\begin{equation}\label{230902e4_69ggg}
\norm{
f^*_{\iota, S_2}
}_2^2 \lesim
\min\set{
\pnorm{
\frac{N}{r_2}
}^{k-2}
r_2^{-1/2},
\frac{r_2}{N}
}
r_2^{\delta_0} 
\norm{f}_{\infty}^2 
\lesim 
r_2^{
-\frac{1}{2(k-1)}
+ \delta_0}
\norm{f}_{\infty}^2.
\end{equation}
Therefore, we always have 
\begin{equation}\label{230902e4_69gghh}
\norm{
f^*_{\iota, S_2}
}_2^2 \lesim
r_2^{
-\frac{1}{2(k-1)}
+ \delta_0}
\norm{f}_{\infty}^2.
\end{equation}
Let $\gamma\in [0, 1]$ to be determined. 
We take a geometric average of \eqref{230902e4_68} and \eqref{230902e4_69gghh}, and obtain 
\begin{equation}\label{230902e4_70}
\max_{S_2
\in \mathfrak{S}_2
} \norm{f^*_{\iota, S_2}}_2^2 \lesim 
 R^{C_p \delta_0}
r_2^{-
\frac{1}{2(k-1)}(1-\gamma)
} 
D_2^{-2\gamma} \norm{f}_{\infty}^2.
\end{equation}
We combine \eqref{230902e4_67} and \eqref{230902e4_70}, and obtain 
\begin{equation}\label{230902e4_71}
\begin{split}
\|
T^N f
\|_{\mathrm{BL}_{A_0}^p\left(\B^3_R\right)}
& \lesim
R^{C_p \delta_0}
(r_2 D_2)^{\frac{1}{2}(1-\beta_2)}
D_1^{\frac{\beta_2}{p_2}} D_2^{\frac{\beta_2}{p_2}}
r_2^{-\frac{1}{2}(
\frac{1}{2}-\frac{1}{p}
)} D_{1}^{-(
\frac{1}{2}-\frac{1}{p}
)}\\
& \times 
r_2^{-
\frac{1}{2(k-1)}(1-\gamma)
(
\frac{1}{2}-\frac{1}{p}
)
} 
D_2^{-2\gamma
(
\frac{1}{2}-\frac{1}{p}
)
}
\norm{f}_2^{\frac{2}{p}} 
\norm{f}_{\infty}^{1-\frac{2}{p}}.
\end{split}
\end{equation}
Here we unify the notation $N=R, p_3=p$. We simplify coefficient on the the right hand side of \eqref{230902e4_71}, and write it as 
\begin{equation}\label{231016e4_89}
\begin{split}
& R^{C_p \delta_0}
(r_2 D_2)^{\frac{1}{2}(1-\beta_2)}
D_1^{\frac{\beta_2}{p_2}} D_2^{\frac{\beta_2}{p_2}}
r_2^{-\frac{1}{2}(
\frac{1}{2}-\frac{1}{p}
)} D_{1}^{-(
\frac{1}{2}-\frac{1}{p}
)}\\
&
\times 
r_2^{-
\frac{1}{2(k-1)}(1-\gamma)
(
\frac{1}{2}-\frac{1}{p}
)
} 
D_2^{-2\gamma
(
\frac{1}{2}-\frac{1}{p}
)
}\\
& \lesim 
R^{C_p \delta_0}
(r_2 D_2)^{\frac{1}{2}(1-\beta_2)}
D_1^{
\frac{\beta_2}{2}
-
2(
\frac{1}{2}-\frac{1}{p}
)
} 
D_2^{
\frac{\beta_2}{2}
-(1+2\gamma)
(
\frac{1}{2}-\frac{1}{p}
)
} r_2^{-
(\frac{k}{2(k-1)}-\frac{\gamma}{2(k-1)})(
\frac{1}{2}-\frac{1}{p}
)}\\
& \lesim 
R^{C_p \delta_0}
(r_2)^{\frac{1}{2}(1-\beta_2)}
D_1^{
\frac{\beta_2}{2}
-
2(
\frac{1}{2}-\frac{1}{p}
)
} 
D_2^{
\frac{1}{2}
-(1+2\gamma)
(
\frac{1}{2}-\frac{1}{p}
)
} r_2^{-
(\frac{k}{2(k-1)}-\frac{\gamma}{2(k-1)})(
\frac{1}{2}-\frac{1}{p}
)}.
\end{split}
\end{equation}
Recall that $\beta_2\in [0, 1]$ is a free parameter we can choose. We give it the constraint that 
\begin{equation}
\frac{\beta_2}{2}
-
2(
\frac{1}{2}-\frac{1}{p}
)
\ge 0.
\end{equation}
Under this constraint, we apply \eqref{230828e4_26cc} and obtain 
\begin{equation}
\begin{split}
\eqref{231016e4_89}
& 
\le 
R^{C_p \delta_0}
r_2^{
\frac12- 
2(
\frac{1}{2}-\frac{1}{p}
)
}
D_2^{
\frac{1}{2}
-(1+2\gamma)
(
\frac{1}{2}-\frac{1}{p}
)
} r_2^{-
(\frac{k}{2(k-1)}-\frac{\gamma}{2(k-1)})(
\frac{1}{2}-\frac{1}{p}
)}.
\end{split}
\end{equation}
By letting the exponents of $r_2$ and $D_2$ be zero, we obtain 
\begin{equation}
p=
3+\frac{k-1}{3k-2}, \ \ \gamma=\frac{3k-2}{4k-3}.
\end{equation}
This finishes the proof of Theorem \ref{230826theorem4_2}.

%
%
%

\section{Proofs of Theorem \ref{230617thm2_1} and Theorem \ref{230323theorem3_4}}

In this section, we will prove Theorem \ref{230617thm2_1} and Theorem \ref{230323theorem3_4}.

\subsection{Preliminaries in Riemannian geometry II}

Let $\epsilon_{\mc{M}}>0$ be a small constant depending on $\mc{M}$. Fix $\epsilon\in (0, \epsilon_{\mc{M}})$. In Section \ref{230903section3_2}, we studied the distance function 
\begin{equation}
\phi_{\epsilon}(x, t; y):=\dist((x, t), (y, \epsilon)).
\end{equation}
We always consider $(x, t)$ in a small neighborhood of the origin, and $(y, \epsilon)$ in a small neighborhood of $(0, \epsilon)$. As a preparation for the proofs of Theorem \ref{230617thm2_1} and Theorem \ref{230323theorem3_4}, we will collect more properties of this function.

For given $\bfx_0=(x_0, t_0)$ and $y_0$, we let $\widetilde{\gamma}:[0, \tilde{L}]\to \mc{M}$ be the arc-length parametrized geodesic satisfying $\widetilde{\gamma}(0)=(x_0, t_0)$ and $\widetilde{\gamma}(\tilde{L})=(y_0, \epsilon)$. Define 
\begin{equation}\label{230719e4_3}
\vec{V}(x_0, t_0; y_0):= \frac{d}{ds}\widetilde{\gamma}(0).
\end{equation}
\begin{lemma}\label{230721claim4_2}
It holds that 
\begin{equation}
\vec{V}(x_0, t_0; y_0)\perp 
(
\partial_{x_1} \partial_{y_i}\phi_{\epsilon}
(x_0, t_0; y_0), \dots, 
\partial_{x_{n-1}} \partial_{y_i}\phi_{\epsilon}
(x_0, t_0; y_0),
\partial_{t} \partial_{y_i}\phi_{\epsilon}
(x_0, t_0; y_0))
\end{equation}
for every $i=1, \dots, n-1$. Here $\perp$ means perpendicular under the Euclidean inner product.  
\end{lemma}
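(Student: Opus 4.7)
The plan is to recognize this as a one-step corollary of Lemma \ref{230411lemmaA_1}. Since $\widetilde{\gamma}$ is an arclength-parametrized geodesic ending at $(y_0,\epsilon)$, it is in particular a geodesic passing through $(y_0,\epsilon)$, so the hypothesis of Lemma \ref{230411lemmaA_1} is met. That lemma tells us that for each fixed $i\in\{1,\dots,n-1\}$, the scalar function
\begin{equation}
s\mapsto \partial_{y_i}\phi_\epsilon(\widetilde{\gamma}(s);y_0)
\end{equation}
is constant on $[0,\tilde L]$.

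Next, I would differentiate this constant function at $s=0$ using the ordinary (Euclidean) chain rule, treating $\widetilde{\gamma}(s)=(\widetilde{\gamma}_1(s),\dots,\widetilde{\gamma}_n(s))$ simply as a smooth curve in the coordinate patch. This yields
\begin{equation}
0=\frac{d}{ds}\bigg|_{s=0}\partial_{y_i}\phi_\epsilon(\widetilde{\gamma}(s);y_0)
=\sum_{j=1}^{n-1}\partial_{x_j}\partial_{y_i}\phi_\epsilon(\bfx_0;y_0)\,\dot{\widetilde{\gamma}}_j(0)+\partial_t\partial_{y_i}\phi_\epsilon(\bfx_0;y_0)\,\dot{\widetilde{\gamma}}_n(0).
\end{equation}
Since $\dot{\widetilde{\gamma}}(0)=\vec V(x_0,t_0;y_0)$ by the definition \eqref{230719e4_3}, the right-hand side is exactly the Euclidean inner product of $\vec V(x_0,t_0;y_0)$ with the vector
$(\partial_{x_1}\partial_{y_i}\phi_\epsilon,\dots,\partial_{x_{n-1}}\partial_{y_i}\phi_\epsilon,\partial_t\partial_{y_i}\phi_\epsilon)(\bfx_0;y_0)$, establishing the claimed orthogonality.

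There is essentially no obstacle here: the only thing to be careful about is that the perpendicularity claimed in the lemma is with respect to the Euclidean inner product, not the Riemannian one, but this matches the Euclidean chain rule used above, so no metric tensor intervenes. One should just note at the start that $\epsilon_{\mc M}$ is taken small enough so that the minimizing geodesic $\widetilde{\gamma}$ joining $\bfx_0$ and $(y_0,\epsilon)$ exists, is unique, and stays in the coordinate patch; this is automatic in a small neighborhood of the origin.
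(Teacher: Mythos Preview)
Your proposal is correct and follows essentially the same approach as the paper: both deduce the lemma as an immediate corollary of Lemma \ref{230411lemmaA_1}, noting that $\partial_{y_i}\phi_\epsilon(\widetilde{\gamma}(s);y_0)$ is constant in $s$ and then differentiating at $s=0$. You simply spell out the chain-rule step that the paper leaves implicit.
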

\begin{proof}[Proof of Lemma \ref{230721claim4_2}]
This is a corollary of Lemma \ref{230411lemmaA_1}, from which we know that $\partial_{y_i}\phi_{\epsilon}(x, t; y_0)$ stays constant when $(x, t)$ moves along $\widetilde{\gamma}$. 
\end{proof}

\begin{lemma}\label{230719claim4_3}
For smooth functions $f(x, t)$, it holds that 
\begin{equation}\label{230719e4_5}
(\vec{V}\cdot \nabla_{\bfx})f
\Big|_{
\substack{
\bfx=
(x_0, t_0)\\
y=y_0
}
}= \frac{d}{ds}(f(
\widetilde{\gamma}(s)))\big|_{s=0}
\end{equation}
and 
\begin{equation}\label{230719e4_6}
(\vec{V}\cdot \nabla_{\bfx})^2 f
\Big|_{
\substack{
\bfx=
(x_0, t_0)\\
y=y_0
}
}
= \frac{d^2}{ds^2}(f(
\widetilde{
\gamma}
(s)))\big|_{s=0},
\end{equation}
where if we write 
\begin{equation}
\vec{V}(x, t; y)=(V_1(x, t; y), \dots, V_n(x, t; y)), 
\end{equation}
then 
\begin{equation}
\vec{V}\cdot \nabla_{\bfx}:= 
V_1(x, t; y) \partial_{x_1}+ \dots+
V_{n-1}(x, t; y) \partial_{x_{n-1}}+
V_n(x, t; y) \partial_{t},
\end{equation}
and 
\begin{equation}
(\vec{V}\cdot \nabla_{\bfx})^2:= 
\sum_{i, j} V_i V_j \partial_i \partial_j+ \sum_{i, j} V_j (\partial_j V_i) \partial_i.
\end{equation}
In the last equation, we used the convention that 
\begin{equation}
\partial_i=\partial_{x_i}, \ \ i=1, \dots, n-1; \ \ \partial_n=\partial_t,
\end{equation}
the same as previously used, say in \eqref{230903e3_19}. 
\end{lemma}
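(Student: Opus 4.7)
My plan is to derive both identities directly from the chain rule, together with one geometric observation about the initial velocity field $\vec V$. Since $\widetilde\gamma:[0,\tilde{L}]\to\mc{M}$ is the arc-length parametrized geodesic from $(x_0,t_0)$ to $(y_0,\epsilon)$, for every $s\in[0,\tilde{L})$ the terminal subarc $\widetilde\gamma|_{[s,\tilde{L}]}$, reparametrized to start at $0$, is itself the unique such geodesic but now running from $\widetilde\gamma(s)$ to $(y_0,\epsilon)$ (uniqueness is ensured provided $\epsilon_{\mc{M}}$ is chosen sufficiently small). Plugging this into the definition \eqref{230719e4_3} of $\vec V$ yields the self-similarity identity
\begin{equation*}
\vec V(\widetilde\gamma(s);y_0)=\dot{\widetilde\gamma}(s), \qquad s\in[0,\tilde{L}),
\end{equation*}
which at $s=0$ specializes to $V_i(x_0,t_0;y_0)=\dot{\widetilde\gamma}_i(0)$ for every $i$.

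With this in hand, the first identity \eqref{230719e4_5} is an immediate application of the one-variable chain rule to $s\mapsto f(\widetilde\gamma(s))$ at $s=0$: the resulting sum $\sum_i \dot{\widetilde\gamma}_i(0)\,\partial_i f$ agrees, by the observation above, with $(\vec V\cdot\nabla_{\bfx})f$ evaluated at $(\bfx;y)=((x_0,t_0);y_0)$.

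For the second identity \eqref{230719e4_6}, I would differentiate once more in $s$ and evaluate at $0$, obtaining
\begin{equation*}
\frac{d^2}{ds^2}f(\widetilde\gamma(s))\Big|_{s=0}=\sum_{i,j}V_iV_j\,\partial_i\partial_j f+\sum_i \ddot{\widetilde\gamma}_i(0)\,\partial_i f.
\end{equation*}
Comparing with the stated definition of $(\vec V\cdot\nabla_{\bfx})^2$, which is just the operator $\vec V\cdot\nabla_{\bfx}$ applied twice with the product rule (so that the extra first-order piece $\sum_{i,j}V_j(\partial_j V_i)\,\partial_i$ appears), it remains to verify the identity $\ddot{\widetilde\gamma}_i(0)=\sum_j V_j(\partial_j V_i)\big|_{(x_0,t_0;y_0)}$. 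The plan is to obtain this by differentiating the self-similarity relation once more in $s$ at $s=0$: by the chain rule the left-hand side becomes $\sum_j \dot{\widetilde\gamma}_j(0)\,(\partial_j V_i)(x_0,t_0;y_0)=\sum_j V_j\,\partial_j V_i$, while the right-hand side becomes $\ddot{\widetilde\gamma}_i(0)$, yielding exactly the required identification.

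There is no real technical obstacle; the entire content is condensed into the self-similarity relation above, which is elementary once uniqueness of short geodesics is granted. The one point that might look slightly surprising is the presence of the first-order summand in the definition of $(\vec V\cdot\nabla_{\bfx})^2$, but as the argument shows it is precisely what is needed to match the geodesic acceleration $\ddot{\widetilde\gamma}$ produced by the second application of the chain rule; no information about Christoffel symbols or the geodesic equation itself is used beyond the bare uniqueness/self-similarity of minimizing geodesics.
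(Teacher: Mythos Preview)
Your proof is correct and follows essentially the same approach as the paper: both rest on the self-similarity identity $\vec V(\widetilde\gamma(s);y_0)=\dot{\widetilde\gamma}(s)$ together with the chain rule. The only cosmetic difference is that the paper substitutes this identity \emph{before} taking the second $s$-derivative (so the expansion of $(\vec V\cdot\nabla_{\bfx})^2 f$ drops out directly), whereas you differentiate first and then identify $\ddot{\widetilde\gamma}_i(0)$ separately --- these are trivially equivalent.
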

\begin{proof}[Proof of Lemma \ref{230719claim4_3}]
We only prove \eqref{230719e4_6}. 
The right hand side of \eqref{230719e4_6} equals 
\begin{equation}\label{230719e4_10}
\frac{d}{ds} \pnorm{
\frac{d}{ds} (f(
\widetilde{
\gamma
}
(s)))
}\Big|_{s=0}
=
\frac{d}{ds}
\pnorm{
\sum_i \partial_i f \frac{d 
\widetilde{\gamma}_i(s)}{ds}
}\Big|_{s=0}
\end{equation}
where 
\begin{equation}
\widetilde{
\gamma}
(s)=(
\widetilde{\gamma}_1(s), \dots, \widetilde{\gamma}_n(s)).
\end{equation}
By \eqref{230719e4_3}, we have 
\begin{equation}
\eqref{230719e4_10}= 
\frac{d}{ds}
\pnorm{
\sum_i (\partial_i f)(
\widetilde{\gamma}(s)) 
V_i(
\widetilde{\gamma}
(s); y_0)
}\Big|_{s=0},
\end{equation}
which, by the chain rule,  equals the left side of \eqref{230719e4_6}. This finishes the proof of the claim. 
\end{proof}

Before we proceed, let us state a corollary of Lemma \ref{230719claim4_3}. Recall from Definition \ref{230903definition1_4} that the phase function $\phi_{\epsilon}(x, t; y)$ is said to satisfy Bourgain's condition at $(x_0, t_0; y_0)$ if 
\begin{equation}\label{230903e5_18}
\left(\left(G_0 \cdot \nabla_{\mathbf{x}}\right)^2 \nabla_{y}^2 \phi_{\epsilon}
\right)\left(\mathbf{x}_0 ; y_0\right) \text { is a multiple of }\left(\left(G_0 \cdot \nabla_{\mathbf{x}}\right) \nabla_{y}^2 \phi_{\epsilon}
\right)\left(\mathbf{x}_0 ; y_0\right),
\end{equation}
where 
\begin{equation}\label{230903e5_19uu}
G_0(\bfx; y):=\partial_{y_1} \nabla_{\bfx} \phi_{\epsilon}(\bfx; y)\wedge \dots\wedge 
\partial_{y_{n-1}} \nabla_{\bfx} \phi_{\epsilon}
(\bfx; y),
\end{equation}
and $\nabla_y^2$ is the standard Euclidean Hessian. By Lemma 2.3 in \cite{GWZ22}, Bourgain's condition is invariant under multiplying $G_0$ by a non-zero scalar function which is allowed to depend on $x_0, y_0$. This, combined with Lemma \ref{230719claim4_3}, gives the following equivalent form of Bourgain's condition. 

\begin{corollary}\label{230903coro5_4}
Bourgain's condition holds for the phase $\phi_{\epsilon}$ at the point $(x_0, t_0; y_0)$ if and only if 
\begin{equation}\label{230720e4_13}
\frac{d^2}{ds^2} (\partial_{y_i} \partial_{y_j} \phi_{\epsilon}(
\widetilde{
\gamma
}(s); 
y_0))\big|_{s=0}= C(\bfx_0; y_0) \frac{d}{ds} (\partial_{y_i} \partial_{y_j} \phi_{\epsilon}
(\widetilde{
\gamma
}
(s); y_0))\big|_{s=0}
\end{equation}
where $C(\bfx_0; y_0)\in \R$ is allowed to depend on $\bfx_0$ and $y_0$, but not on $1\le i, j\le n-1$.
\end{corollary}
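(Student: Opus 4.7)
The plan is to rewrite Bourgain's condition in the equivalent form by replacing $G_0(\bfx_0;y_0)$ with the geodesic velocity $\vec{V}(\bfx_0;y_0)$, and then invoke Lemma \ref{230719claim4_3} to convert Euclidean directional derivatives $\vec{V}\cdot\nabla_{\bfx}$ into $s$-derivatives along $\widetilde{\gamma}$.

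The first step is to observe that $G_0(\bfx_0;y_0)$ and $\vec{V}(\bfx_0;y_0)$ are parallel. By definition, $G_0(\bfx;y)$ is the wedge product of the $n-1$ vectors $\partial_{y_i}\nabla_{\bfx}\phi_\epsilon(\bfx;y)$, $i=1,\dots,n-1$; under the Hodge identification of $\Lambda^{n-1}(\R^n)$ with $\R^n$, it is orthogonal to each $\partial_{y_i}\nabla_{\bfx}\phi_\epsilon(\bfx;y)$. By Lemma \ref{230721claim4_2}, the geodesic velocity $\vec{V}(\bfx_0;y_0)$ is also orthogonal to each of these $n-1$ vectors. H\"ormander's condition (H1) guarantees that these $n-1$ vectors are linearly independent at $(\bfx_0;y_0)$, so their common orthogonal complement in $\R^n$ is one-dimensional. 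Consequently there exists a scalar $c(\bfx_0;y_0)\neq 0$ with
\begin{equation}
G_0(\bfx_0;y_0)=c(\bfx_0;y_0)\,\vec{V}(\bfx_0;y_0).
\end{equation}

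The second step is to apply Lemma 2.3 of \cite{GWZ22}, which says that Bourgain's condition \eqref{230903e5_18} is preserved under multiplying $G_0$ by any non-zero scalar that is allowed to depend on $(\bfx_0;y_0)$. Combined with the previous step, this means \eqref{230903e5_18} holds at $(\bfx_0;y_0)$ if and only if
\begin{equation}
\bigl((\vec{V}\cdot \nabla_{\bfx})^2\nabla_y^2 \phi_\epsilon\bigr)(\bfx_0;y_0) \text{ is a multiple of } \bigl((\vec{V}\cdot \nabla_{\bfx})\nabla_y^2\phi_\epsilon\bigr)(\bfx_0;y_0),
\end{equation}
with a proportionality constant that may depend on $(\bfx_0;y_0)$ but must be the same for all entries $(i,j)$ of the $(n-1)\times(n-1)$ Hessian $\nabla_y^2\phi_\epsilon$.

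The third step is to apply Lemma \ref{230719claim4_3} entrywise with $f=\partial_{y_i}\partial_{y_j}\phi_\epsilon(\cdot;y_0)$. Formulas \eqref{230719e4_5} and \eqref{230719e4_6} identify the first- and second-order Euclidean directional derivatives $(\vec{V}\cdot\nabla_{\bfx})^k f|_{\bfx=\bfx_0}$ with the first- and second-order derivatives $\frac{d^k}{ds^k}f(\widetilde{\gamma}(s))|_{s=0}$ along the geodesic. Substituting these into the previous equivalence yields exactly \eqref{230720e4_13}, with $C(\bfx_0;y_0)$ independent of $(i,j)$. I do not anticipate a genuine obstacle: the only point that requires a moment of care is verifying that the common orthogonal complement in the first step is truly one-dimensional, which is immediate from (H1), and that Lemma 2.3 of \cite{GWZ22} permits the scalar factor $c(\bfx_0;y_0)$ to depend on the base point, which is exactly what it asserts.
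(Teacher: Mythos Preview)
Your proof is correct and follows essentially the same approach as the paper: the paper's argument (stated just before the corollary) invokes Lemma~2.3 of \cite{GWZ22} together with Lemma~\ref{230719claim4_3}, implicitly relying on Lemma~\ref{230721claim4_2} and (H1) to identify $G_0$ as parallel to $\vec{V}$. You have simply made this last step explicit, which the paper leaves to the reader.
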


\subsection{Relation between Carleson-Sj\"olin on manifolds and H\"ormander's problems}

In this subsection, we will prove item a) of Theorem \ref{230617thm2_1}. This is well-known. Moreover, we also know that Carleson-Sj\"olin on manifolds is elliptic, in the sense that it satisfies (H1) (H2), and if we write the relevant phase function in the normal form as in \eqref{230324e1_4}, then the matrix $A$ is always positive definite.  \\

The proof is very short, and therefore we will include the proof here. Recall that a reduced Carleson-Sj\"olin operator is given by 
\begin{equation}
R_N^{(\mc{M}, \mc{M}')} f(\bfx)=\int_{\mc{M}'}
e^{i N\dist(\bfx, \bfy)} a(\bfx) f(\bfy) d\mc{H}^{n-1}(\bfy),
\end{equation}
where $\mc{M}'$ is a submanifold of $\mc{M}$. Let us without loss of generality assume that $\mc{M}'$ is given by $\{(y, \epsilon): y\in \R^2\}$ for some $\epsilon\neq 0$, and that the smooth amplitude function $a(\bfx)$ is supported in a small neighborhood of the origin. Our oscillatory integral becomes 
\begin{equation}
\int_{\R^{n-1}}
e^{
iN\phi_{\epsilon}(x, t; y) 
}
a(\bfx) f(y)dy,
\end{equation}
where $\bfx=(x, t)$ and 
\begin{equation}
\phi_{\epsilon}(x, t; y):=\dist((x, t), (y, \epsilon)). 
\end{equation}
We need to show that 
\begin{equation}\label{230903e5_23}
\rank \nabla_{x}\nabla_y \phi_{\epsilon}(x_0, t_0; y_0)=n-1,
\end{equation}
and that 
\begin{equation}\label{230903e5_24}
\left.\operatorname{det} \nabla_{y}^2\left\langle\nabla_{\mathbf{x}} \phi_{\epsilon}
(\mathbf{x}_0 ; y), G_0\left(\mathbf{x} ; y_0\right)\right\rangle\right|_{y=y_0} \neq 0,
\end{equation}
for all $(x_0, t_0)$ near $(0, 0)$ and $y_0$ near $0$, 
where $G_0$ is defined in \eqref{230903e5_19uu}. This follows immediately from continuity and the fact that \eqref{230903e5_23} and \eqref{230903e5_24} hold for the Euclidean distance function. 

We remark here that by applying the tools from Riemannian geometry we introduced above, one can avoid the continuity argument and make the choice of $\epsilon$ more explicit. To keep our presentation short, we will not pursue this direction.

\subsection{Relation between Carleson-Sj\"olin on manifolds and Nikodym on manifolds}\label{230904subsection5_3}

The goal of this subsection is to prove item b) of Theorem \ref{230617thm2_1}. As mentioned below Theorem \ref{230617thm2_1}, the proof is essentially the same as the proof of Theorem \ref{230706thm1_8}; the only extra input is Lemma \ref{230411lemmaA_1}. 

Let us be more precise. We continue to use the notation 
\begin{equation}
\phi_{\epsilon}(x, t; y):=\dist((x, t), (y, \epsilon)). 
\end{equation}
For the given phase function $\phi_{\epsilon}$, recall from Definition \ref{230617defi1_6} that curved tubes in the curved Kakeya problem associated to $\phi_{\epsilon}$ are given by 
\begin{equation}
T_y^{\delta, (\phi_{\epsilon})}(\bfx):=\left\{\bfx' \in \mathbb{R}^n\cap \B^n_{2\epsilon_{\phi
_{\epsilon}
}}:\left|\nabla_y \phi_{\epsilon}(\bfx'; y)-\nabla_y \phi_{\epsilon}(\bfx; y)\right|<\delta\right\}.
\end{equation}
However, Lemma \ref{230411lemmaA_1} says precisely that this is the $\delta$-neighborhood of a geodesic passing through $(y, \epsilon)$. The Nikodym maximal function $\mc{N}_{\delta, \lambda}f(y, \epsilon)$ is essentially the curved  Kakeya maximal function $\mc{K}_{\delta}^{(\phi_{\epsilon})}f(y)$. Here we need to assume that $\lambda<1$ just to avoid the singularities of the distance function along the diagonals. We refer the rest of the details to Wisewell's thesis \cite[page 26]{Wis03}. \\

Before finishing this subsection, let us make a remark that the Nikodym maximal function bound we can deduce here is stronger than what we need for $\mc{N}_{\delta, \lambda}$. More precisely, the Nikodym maximal function bound concerns the $L^p$ norm on $\B^n_{
\epsilon_{\mc{M}}/2
},$ which is an integral over an $n$-dimensional object. However, the bound we can deduce from the argument above concerns the $L^p$ norm on each hyperplane $\mc{M}'$, which is of course much stronger because of Fubini's theorem. This also explains one key difference between Nikodym maximal operators and curved Kakeya maximal operators.

\subsection{Proof of Theorem \ref{230323theorem3_4}: Part a)}

Let us without loss of generality assume that $\bfx$ is near $0\in \R^n$ and $\bfy$ is near $(0, \epsilon)$ where $0\in \R^{n-1}$ and $\epsilon>0$ is a fixed small number. What we need to  prove is 
\begin{equation}\label{230903e5_27}
\Norm{
\int_{\R^{n-1}}
e^{iN \phi_{\epsilon}
(x, t; y)} a(\bfx; y)g(y)dy
}_{L^p(\R^n)} \lesim_{\mc{M}, a, \epsilon'} 
N^{-\frac{n}{p}+ \epsilon'}
\norm{g}_{L^p(\R^{n-1})},
\end{equation}
for every $\epsilon'>0$, where 
\begin{equation}\label{230415e4_2}
\phi_{\epsilon}
(x, t; y):=\dist((x, t), (y, \epsilon)),
\end{equation}
and $a(\bfx; y)$ is a smooth bump function with $\bfx=(x, t)$ supported near $0\in \R^n$ and $y$ supported near $0\in \R^{n-1}$. To prove \eqref{230903e5_27}, by \cite[Theorem 1.3]{GWZ22}, it suffices to prove 
\begin{lemma}\label{230323lemma5_1}
The phase function $\phi_{\epsilon}(x, t; y)$ satisfies Bourgain's condition everywhere if $\mcm$ has a constant sectional curvature. 
\end{lemma}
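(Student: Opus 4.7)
The plan is to apply Corollary \ref{230903coro5_4}, which reduces Bourgain's condition at an arbitrary $(\bfx_0; y_0)$ to verifying that the first and second $s$-derivatives at $s=0$ of
\[
H_{ij}(s) := \partial_{y_i}\partial_{y_j}\phi_\epsilon(\widetilde{\gamma}(s); y_0)
\]
are proportional with an $(i,j)$-independent constant, where $\widetilde{\gamma}:[0,\tilde L]\to\mcm$ is the arclength geodesic from $\bfx_0$ to $(y_0,\epsilon)$. By Lemma \ref{230411lemmaA_1}, $\nabla_y\phi_\epsilon(\bfx;y_0)$ is constant in $\bfx$ along $\widetilde{\gamma}$, so repeating the proof of Claim \ref{230820claim3_1} in an arbitrary chart (as the Remark following it permits) yields
\[
H_{ij}(s) \,=\, (\hessian\,\Phi)\big|_{(\widetilde{\gamma}(s),\,(y_0,\epsilon))}\bigl(\tfrac{\partial}{\partial y_i},\tfrac{\partial}{\partial y_j}\bigr) \,+\, c_{ij}(y_0),
\]
where the Hessian is taken in the second variable and $c_{ij}(y_0)$ is independent of $s$ and hence drops out under differentiation.

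Next, I would compute this covariant Hessian through Sakai's lemma (Lemma \ref{230902lemma3_5gg}) applied along $\widetilde{\gamma}|_{[s,\tilde L]}$. Choose a parallel orthonormal frame $E_1,\dots,E_{n-1}$ along $\widetilde{\gamma}$ with $E_k\perp\dot{\widetilde{\gamma}}$, and expand
\[
\tfrac{\partial}{\partial y_i}\big|_{(y_0,\epsilon)} \,=\, \sum_k c_{ik}(y_0)\,E_k(\tilde L) \,+\, \alpha_i(y_0)\,\dot{\widetilde{\gamma}}(\tilde L);
\]
the radial component contributes nothing by \eqref{231023e3_54}. Under the constant curvature hypothesis, the Jacobi endomorphism $R(s')$ in \eqref{230822e3_34hh} reduces to $K\cdot I$, so the matrix ODE $A''(s')+A(s')R(s')=0$ decouples; its unique solution with $A(s,s)=0$ and $A(s,\tilde L)=I$ is
\[
a_{k\ell}(s,s') \,=\, \frac{s_K(s'-s)}{s_K(\tilde L - s)}\,\delta_{k\ell},
\]
where $s_K(r)$ denotes the generalized sine, namely $\sin(\sqrt{K}\,r)/\sqrt{K}$ for $K>0$, $r$ for $K=0$, and $\sinh(\sqrt{-K}\,r)/\sqrt{-K}$ for $K<0$. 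Combining with \eqref{230821e3_29} and the frame expansion gives, modulo terms constant in $s$,
\[
H_{ij}(s) \,=\, h(s)\,M_{ij}(y_0), \qquad h(s) := \frac{s_K'(\tilde L - s)}{s_K(\tilde L - s)}, \qquad M_{ij}(y_0) := \sum_k c_{ik}(y_0)\,c_{jk}(y_0).
\]

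The decisive feature is that the $s$-dependence factorizes through the single scalar $h(s)$ multiplying the $s$-independent matrix $M(y_0)$. Consequently $\partial_s H_{ij}(0) = h'(0)\,M_{ij}(y_0)$ and $\partial_s^2 H_{ij}(0) = h''(0)\,M_{ij}(y_0)$, and the ratio $h''(0)/h'(0)$ (well defined, since a direct computation shows $h'(0) = K + s_K'(\tilde L)^2/s_K(\tilde L)^2>0$ when $\tilde L$ is below the injectivity radius, which holds for $\epsilonm$ small) is manifestly independent of $i,j$. Corollary \ref{230903coro5_4} then delivers Bourgain's condition at every $(\bfx_0;y_0)$. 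The only nontrivial obstacle is the bookkeeping at the first step: carefully justifying the general-coordinate version of Claim \ref{230820claim3_1} and isolating the radial component of the $\partial/\partial y_i|_{(y_0,\epsilon)}$; once those are in place, constant curvature reduces the Jacobi calculation to a scalar ODE that produces the clean product structure automatically.
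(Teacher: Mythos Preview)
Your proposal is correct and follows essentially the same route as the paper: reduce via Corollary \ref{230903coro5_4}, pass to the covariant Hessian through Claim \ref{230820claim3_1}, apply Sakai's lemma (Lemma \ref{230902lemma3_5gg}) to express everything via Jacobi fields, observe that constant curvature forces $R(s')=K\cdot I$ so the matrix ODE becomes scalar, and conclude that the Hessian factors as a scalar function of $s$ times an $s$-independent matrix. The only cosmetic difference is that the paper moves into Fermi coordinates along $\widetilde\gamma$ (so that $\partial/\partial y_i$ already coincides with the parallel frame $E_i$ and your matrix $M_{ij}$ becomes $\delta_{ij}$), whereas you stay in general coordinates and carry the change-of-basis coefficients $c_{ik}$; both arrive at the same explicit $h(s)=s_K'(\tilde L-s)/s_K(\tilde L-s)$ and the same conclusion.
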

\begin{proof}[Proof of Lemma \ref{230323lemma5_1}] Take two points $\bfx_0=(x_0, t_0)$ and $(y_0, \epsilon)$ on the manifold, and we would like to check Bourgain's condition. By Corollary \ref{230903coro5_4}, it suffices to prove that 
\begin{equation}\label{230903e5_29}
\frac{d^2}{ds^2} (\partial_{y_i} \partial_{y_j} \phi_{\epsilon}(
\widetilde{
\gamma
}(s); 
y_0))\big|_{s=0}= C(\bfx_0; y_0) \frac{d}{ds} (\partial_{y_i} \partial_{y_j} \phi_{\epsilon}
(\widetilde{
\gamma
}
(s); y_0))\big|_{s=0}
\end{equation}
where $\widetilde{\gamma}$ is the arc-length parametrized geodesic connecting $(x_0, t_0)$ and $(y_0, \epsilon)$, and  $C(\bfx_0; y_0)\in \R$ is allowed to depend on $\bfx_0$ and $y_0$, but not on $1\le i, j\le n-1$.\\

Denote 
\begin{equation}
\Phi(x, t; y, \tau):=
\dist(
(x, t), (y, \tau)
).
\end{equation}
We would like to connect $\partial_{y_i} \partial_{y_j} \phi_{\epsilon}$ with the covariant Hessian of $\Phi$. Without loss of generality, assume that $(x_0, t_0)=(0, 0)$ and $y_0=0$. Note that in \cite[Corollary 2.2]{GWZ22}, it is proved that Bourgain's condition is independent of the choice of coordinates. Therefore, we can for the sake of simplicity assume that we are in the same setting as in Subsection \ref{230903section3_2}, that is, we are in the Fermi coordinate based on the geodesic
\begin{equation}
\gamma(s)=(0, s), \ \ \forall s\in [0, \epsilon],
\end{equation}
and if we define
\begin{equation}
E_i(s):= \frac{\partial}{\partial y_i}\in T_{\gamma(s)}\mc{M}, \ \ i=1, \dots, n-1, \ \ E_n(s):= \frac{\partial}{\partial \tau}\in T_{\gamma(s)}\mc{M},
\end{equation}
then $\{E_i(s)\}_{i=1}^n$ forms an orthonormal basis for $T_{\gamma(s)}\mc{M}$. We will need Claim \ref{230820claim3_3} and Lemma \ref{230902lemma3_5gg} for manifolds of a general dimension $n$, not just $3$; however, this requires only notational changes. 

 Recall the notation from \eqref{230903e3_20xx}--\eqref{230903e3_23xx}.  We apply Claim \ref{230820claim3_1} and obtain 
\begin{equation}\label{230903e5_33}
\frac{\partial}{\partial y_i} \frac{\partial}{\partial y_j} \phi_{0, \epsilon}\Big|_{\substack{
(x, t)=\gamma(s),\\
 y=0
 }}= \pnorm{\hessian \Phi_0}\Big|_{
 \substack{
 (x, t)=\gamma(s), \\
 (y, \tau)=(0, \epsilon)}
 }\pnorm{
\frac{\partial}{\partial y_i}, \frac{\partial}{\partial y_j}
}
\end{equation}
for every $s\in [0, \epsilon)$, $1\le i, j\le n-1$. Here $\hessian$ is the covariant Hessian in the $(y, \tau)$ variables. In \eqref{230903e5_33}, we connected $\partial_{y_i} \partial_{y_j} \phi_{0, \epsilon}$ with the covariant Hessian of $\Phi_0$, but not $\partial_{y_i} \partial_{y_j} \phi_{\epsilon}$ with the covariant Hessian of $\Phi$. However, by taking derivatives $\nabla_{\dot{\gamma}}$ on both sides of \eqref{230903e5_33}, we  immediately obtain 
\begin{equation}
\frac{\partial^{\iota}}{\partial s^{\iota}} 
\frac{\partial}{\partial y_i} \frac{\partial}{\partial y_j} \phi_{\epsilon}(\gamma(s); 0)= 
\frac{\partial^{\iota}}{\partial s^{\iota}} 
\pnorm{\hessian \Phi(\gamma(s); (0, \epsilon))}
\pnorm{
\frac{\partial}{\partial y_i}, \frac{\partial}{\partial y_j}
}
\end{equation}
for every $\iota\in \N, \iota\ge 1$ and every $1\le i, j\le n-1, s\in [0, \epsilon)$. Therefore, to check Bourgain's condition, it is equivalent to check that 
\begin{equation}\label{230903e5_35}
\frac{\partial^{2}}{\partial s^{2}} 
\pnorm{\hessian \Phi(\gamma(s); (0, \epsilon))}
\pnorm{
\frac{\partial}{\partial y_i}, \frac{\partial}{\partial y_j}
}\Big|_{s=0}=C \frac{\partial}{\partial s} 
\pnorm{\hessian \Phi(\gamma(s); (0, \epsilon))}
\pnorm{
\frac{\partial}{\partial y_i}, \frac{\partial}{\partial y_j}
}\Big|_{s=0},
\end{equation}
for every $1\le i, j\le n-1$. Recall the second equation in Claim \ref{230820claim3_1} that 
\begin{equation}
\pnorm{\hessian \Phi_0}\Big|_{
 \substack{
 (x, t)=\gamma(s), \\
 (y, \tau)=(0, \epsilon)}
 }\pnorm{
Y, \frac{\partial}{\partial \tau}
}=0, \ \ \forall Y\in T_{(0, \epsilon)}\mc{M},
\end{equation}
for every $s\in [0, \epsilon)$, $1\le i\le n-1$. If we identify $\partial/\partial y_n$ with $\partial/\partial \tau$, then Bourgain's condition is equivalent to saying that \eqref{230903e5_35} holds for $1\le i, j\le n$, that is, 
\begin{equation}
\nabla_{\dot{\gamma}}^2 \ \hessian\ \Phi\big|_{
\substack{
(x, t)=(0, 0)\\
(y, \tau)=(0, \epsilon)}
}
= C \ \nabla_{\dot{\gamma}} \ \hessian\ \Phi\big|_{
\substack{
(x, t)=(0, 0)\\
(y, \tau)=(0, \epsilon)}
}
\end{equation}
for some $C\in \R$. \\

To prove this, the first few steps are the same as in those in Subsection \ref{230903section3_2}.  Define Jacobi fields (previously defined in \eqref{230820e3_19})
\begin{equation}\label{230904e5_38}
\begin{split}
& \nabla^2_{\dot{\gamma}(s')} X_j(s, s')+R(X_j(s, s'), \dot{\gamma}(s'))\dot{\gamma}(s')=0, \\
& X_j(s, s)=0, \ \ X_j(s, \epsilon)=E_j(\epsilon).
\end{split}
\end{equation}
By Claim \ref{230820claim3_3} and Lemma \ref{230902lemma3_5gg}, we obtain (see \eqref{230820e3_22})
\begin{equation}\label{230904e5_39}
\hessian \Phi\big|_{\gamma(s)}
\pnorm{
E_i(\epsilon), E_j(\epsilon)
}
=
g\pnorm{
\nabla_{\dot{\gamma}(s')}X_i(s, s'), X_j(s, s')
}\Big|_{s'=\epsilon}.
\end{equation}
Define $a_{ji}(s, s')$ by 
\begin{equation}
X_j(s, s')= a_{j1}(s, s') E_1(s')+ a_{j2}(s, s')E_2(s'), \ \ j=1, \dots, n-1.
\end{equation}
Denote 
\begin{equation}
A(s, s')=[a_{ij}(s, s')]_{1\le i, j\le n-1}.
\end{equation}
Then the Jacobi fields \eqref{230904e5_38} can be written as
\begin{equation}\label{230904e5_42}
\begin{split}
& \partial_{s'}^2 A(s, s')+ A(s, s') R(s')=0, \\
& A(s, s)=0, \ A(s, \epsilon)=I_{(n-1)\times (n-1)},
\end{split}
\end{equation} 
where 
\begin{equation}
R(s'):=[R_{innj}(s')]_{1\le i, j\le n-1}
\end{equation}
and 
\begin{equation}
R_{ijkl}(s'):=g(
R(E_i(s'), E_j(s'))E_k(s'), E_l(s')
).
\end{equation}
Moreover, as calculated in \eqref{230821e3_29}, we have 
\begin{equation}
\eqref{230904e5_39}=
\partial_{s'}
a_{ij}\big|_{s'=\epsilon}
\end{equation}
Recall that Bourgain's condition is equivalent to \eqref{230903e5_35}, and now it is further equivalent to 
\begin{equation}\label{230904e5_46}
\partial_s^2 
\partial_{s'}
A\big|_{s=0, s'=\epsilon}
= C\ 
\partial_s
\partial_{s'}
A\big|_{s=0, s'=\epsilon},
\end{equation}
for some $C\in \R$. \\

Let $\kappa\in \R$ be the sectional curvature of $\mc{M}$. As the sectional curvature of $\mc{M}$ is constant, we have (see for instance \cite[page 84]{Pet16})
\begin{equation}
R(w, v)v=
\kappa(
w-g(w, v)v
),
\end{equation}
where $g(v, v)=1$. Therefore $R(s')= \kappa I_{(n-1)\times (n-1)}$. We solve \eqref{230904e5_42} explicitly, and obtain 
\begin{equation}
A(s, s')=
A_0^{-1}(\epsilon-s)
A_0(s'-s),
\end{equation}
where 
\begin{equation}
A_0(r):=
\begin{cases}
\frac{
\sin \sqrt{\kappa}r
}{\sqrt{\kappa}} I_{(n-1)\times (n-1)}, & \text{ if } \kappa>0,\\
r I_{(n-1)\times (n-1)}, & \text{ if } \kappa=0,\\
\frac{
\sinh \sqrt{-\kappa}r
}{\sqrt{-\kappa}} I_{(n-1)\times (n-1)}, & \text{ if } \kappa<0.\\
\end{cases}
\end{equation}
Compute 
\begin{equation}
\partial_{s'}A\big|_{s'=\epsilon}
=
\begin{cases}
\frac{
\sqrt{\kappa}\cos (\sqrt{\kappa}
(\epsilon-s))
}{
\sin(\sqrt{\kappa}(\epsilon-s))
} I_{(n-1)\times (n-1)}, & \text{ if } \kappa>0,\\
\frac{1}{\epsilon-s} I_{(n-1)\times (n-1)}, & \text{ if } \kappa=0,\\
\frac{
\sqrt{-\kappa}
\cosh (\sqrt{-\kappa}(\epsilon-s)
)
}{
\sinh (
\sqrt{-\kappa}(\epsilon-s)
)
} I_{(n-1)\times (n-1)}, & \text{ if } \kappa<0.\\
\end{cases}
\end{equation}
From this, one can see that \eqref{230904e5_46} holds. This finishes the proof of the lemma. \end{proof}

\subsection{Proof of Theorem \ref{230323theorem3_4}: Part b)}

By \cite[Theorem 1.1]{GWZ22}, it suffices to prove that Bourgain's condition fails at least at one point, if the sectional curvature of $\mc{M}$ is not constant. We will argue by contradiction, assume that Bourgain's condition holds at every point, and then derive that the sectional curvature must be constant.  \\

Before we start the proof, let us mention that in the subsection we will need Claim \ref{230822claim3_5} for manifolds of a general dimension $n$; however in the proof of this claim, the dimension parameter actually does not appear explicitly, and the same argument works for all dimensions $n$.   

Recall the Jacobi fields \eqref{230904e5_42}, and that Bourgain's condition is equivalent to (see \eqref{230904e5_46})
\begin{equation}\label{230904e5_46xx}
\partial_s^2 
\partial_{s'}
A\big|_{s=0, s'=\epsilon}
= C\ 
\partial_s
\partial_{s'}
A\big|_{s=0, s'=\epsilon},
\end{equation}
for some $C\in \R$. To study \eqref{230904e5_42}, we follow the strategy in the proof of Claim \ref{230822claim3_5}, and introduce the following two systems of equations (see \eqref{230823e3_81} and \eqref{230823e3_82})
\begin{equation}\label{230904e5_52}
\begin{split}
& B''_1(s')+ B_1(s')R(s')=0_{(n-1)\times (n-1)},\\
& B_1(0)=I_{(n-1)\times (n-1)}, \ \ B'_1(0)=0_{(n-1)\times (n-1)},
\end{split}
\end{equation}
and 
\begin{equation}\label{230904e5_53}
\begin{split}
& B''_2(s')+ B_2(s')R(s')=0_{(n-1)\times (n-1)},\\
& B_2(0)=0_{(n-1)\times (n-1)}, \ \ B'_2(0)=I_{(n-1)\times (n-1)}.
\end{split}
\end{equation}
Below we abbreviate $0_{(n-1)\times (n-1)}$ to $0$, and $I_{(n-1)\times (n-1)}$ to $I$. Recall that in \eqref{230904e3_104}, we obtained that 
\begin{equation}\label{230904e3_104mm}
\partial^2_s\partial_{s'} A(0, \epsilon)=
2 B_2^{-1}(\epsilon) B_1(\epsilon) \partial_s\partial_{s'} A(0, \epsilon).
\end{equation}
As we assume that Bourgain's condition holds everywhere, we can find a scalar function $C(\epsilon)$ such that 
\begin{equation}\label{230904e5_55}
B_1(\epsilon)
=C(\epsilon)B_2(\epsilon).
\end{equation}
By taking the second order derivative in \eqref{230904e5_55}, we obtain 
\begin{equation}
B''_1(\epsilon)
=C''(\epsilon)B_2(\epsilon)
+
2C'(\epsilon)B'_2(\epsilon)+ C(\epsilon)B''_2(\epsilon).
\end{equation}
This, combined with the first equation in \eqref{230904e5_52}, implies that 
\begin{equation}\label{230904e5_57}
C''(\epsilon) B_2(\epsilon)
+ 2 C'(\epsilon) B'_2(\epsilon)=0.
\end{equation}
By taking a further derivative in $\epsilon$, we obtain 
\begin{equation}
C'''(\epsilon) B_2(\epsilon)
+ 3 C''(\epsilon) B'_2(\epsilon)
+
2 C'(\epsilon) B''_2(\epsilon)=0.
\end{equation}
By \eqref{230904e5_57} and the first equation in \eqref{230904e5_53}, we obtain 
\begin{equation}\label{230904e5_59}
C'(\epsilon)
C'''(\epsilon) B_2(\epsilon)
- \frac32 (C''(\epsilon))^2 B_2(\epsilon)
-
2 (C'(\epsilon))^2 R(\epsilon)B_2(\epsilon)=0.
\end{equation}
Because of the initial condition in \eqref{230904e5_53}, we see that $B_2(\epsilon)$ is always invertible, whenever $\epsilon>0$ is taken to be small enough. Consequently, \eqref{230904e5_59} is equivalent to 
\begin{equation}\label{230904e5_60}
C'(\epsilon)
C'''(\epsilon) I
- \frac32 (C''(\epsilon))^2 I
-
2 (C'(\epsilon))^2 R(\epsilon)=0.
\end{equation}
Recall in \eqref{230831e3_95}, we obtained 
\begin{equation}
\begin{split}
B_2^{-1}(\epsilon) B_1(\epsilon)
=
\frac{1}{\epsilon}
\pnorm{
I
-
\frac{R_0}{3} \epsilon^2
-
\frac{R'_0}{12} \epsilon^3+ O(\epsilon^4)
},
\end{split}
\end{equation}
where $R_0:=R(0), R'_0:=R'(0)$ and the implicit constant in $O(\epsilon)$ depends only on the manifild. This, combined with the definition of $C(\epsilon)$ in \eqref{230904e5_55}, implies that $C'(\epsilon)\neq 0$ whenever $\epsilon\neq 0$ is taken small enough. Therefore, \eqref{230904e5_59} can be further written as 
\begin{equation}
R(\epsilon)=
\pnorm{
\frac{
C'''(\epsilon)}{
2C'(\epsilon)
}
- \frac{3 (C''(\epsilon))^2}{4 (C'(\epsilon))^2}  } I
=:
 \kappa(\epsilon) I.
\end{equation}
For two given tangent vectors $E_i(\epsilon), E_n(\epsilon)$ with $1\le i\le n-1$, the sectional curvature at $\gamma(\epsilon)$ associated to these two tangent vectors is 
\begin{equation}
R(E_i(\epsilon), E_n(\epsilon), E_n(\epsilon), E_i(\epsilon))=
\kappa(\epsilon),
\end{equation}
which is independent of $1\le i\le n-1$. \\

Recall that $\{E_i(\epsilon)\}_{i=1}^n$ forms an orthonormal basis for $T_{\gamma(\epsilon)}\mc{M}$. So far we have proven that the sectional curvature at $\gamma(\epsilon)$ associated to the pair of vectors $E_i(\epsilon), E_n(\epsilon)$ with $1\le i\le n-1$ is independent of $i$. Let us write it as $\kappa(\gamma(\epsilon), n)$. The index $n$ plays a special role here because the geodesic $\gamma$ is chosen such that $\dot{\gamma}(\epsilon)=E_n(\epsilon)$. Now we consider all possible geodesics passing through the point $\gamma(\epsilon)$. Similarly, we will obtain that 
\begin{equation}
R(E_i(\epsilon), E_{n'}(\epsilon), E_{n'}(\epsilon), E_i(\epsilon))=
\kappa(\gamma(\epsilon), n'),
\end{equation}
where $\kappa(\gamma(\epsilon), n')\in \R$ is some constant that is independent of $i\neq n'$. By basic symmetries of Riemannian tensors, we can conclude that 
\begin{equation}
R(E_i(\epsilon), E_{j}(\epsilon), E_{j}(\epsilon), E_i(\epsilon))
\end{equation}
is the same for all choices of $i\neq j$. By Schur's lemma, this constant must also be independent of the choice of the point $\gamma(\epsilon)$, that is, $\mc{M}$ must have constant sectional curvature.

\subsection{Proof of Theorem \ref{230323theorem3_4}: Part c)}

To prove the $\lambda$-Nikodym maximal function bound in part c) of Theorem \ref{230323theorem3_4}, one just need to repeat the argument in Hickman, Roger and Zhang \cite{HRZ22}, similarly to how one proves the bound \eqref{230815e2_4} for  curved Kakeya maximal functions associated to phase functions satisfying Bourgain's condition.  Lemma \ref{230323lemma5_1} guarantees that the relevant phase functions satisfy Bourgain's condition, which further guarantees that we have (strong) polynomial Wolff axioms as required by \cite{HRZ22}, and Subsection \ref{230904subsection5_3} explains the connection between Nikodym maximal functions and curved Kakeya maximal functions. 

The Minkowski dimension bound in part c) of Theorem \ref{230323theorem3_4} follows from standard argument connecting Nikodym maximal function bounds and Minkowski dimensions of Nikodym sets, see for instance \cite[Corollary 2.2]{Sog99}.

%
%
%
%
%
%
%

\appendix

\section{More connections between curved Kakeya problems and Nikodym problems on manifolds}

In this section, we will show that not every curved Kakeya problem can be viewed as a Nikodym problem on manifolds. Let us be more precise. We will find a phase function $\phi(x, t; \xi): \R^2\times \R\times \R^2\to \R$ satisfying H\"ormander's non-degeneracy condition, and show that  no matter how we pick Riemannian metric tensor $\{g_{ij}(x, t)\}_{1\le i, j\le 3}$ on $\R^2\times \R$, the curves 
\begin{equation}
\{(x, t): \nabla_{\xi} \phi(x, t; \xi)=\omega\}
\end{equation}
where $\xi, \omega$ are parameters, are never geodesics. \\

Let us take a phase function $\phi(x, t; \xi)$ satisfying H\"ormander's non-degeneracy condition that will be picked later. Let us assume that we can find a Riemannian metric tensor $\{g_{ij}(x, t)\}_{1\le i, j\le 3}$ such that 
\begin{equation}
\{(x, t)\in \B^3_{2\epsilon_{\phi}}: \nabla_{\xi} \phi(x, t; \xi)=\omega\}
\end{equation}
is a geodesic for every $\xi\in \B^2_{\epsilon_{\phi}}$ and $\omega\in \B^2_{\epsilon_{\phi}}$. Here $\epsilon_{\phi}>0$ is a small real number that depends on $\phi$. 

Fix $(x_0, t_0)$. We write a geodesic passing through this point as $(X_{\xi}(t), t)$ where 
\begin{equation}\label{220927e1_3}
    (\nabla_{\xi}\phi)(X_{\xi}(t), t; \xi)=(\nabla_{\xi}\phi)(x_0, t_0; \xi), \ \ X_{\xi}(t_0)=x_0.
\end{equation}
We compute $\frac{\partial}{\partial t} X_{\xi}(t)$, and obtain the tangent vector at the starting point. To do so, we take the derivative of \eqref{220927e1_3} in $t$, and obtain 
\begin{equation}\label{220929e1_6}
    \nabla_x \nabla_{\xi}\phi(X_{\xi}(t), t; \xi)\cdot \frac{\partial}{\partial t} X_{\xi}(t)+\partial_t \nabla_{\xi}\phi(X_{\xi}(t), t; \xi)=0.
\end{equation}
Therefore, 
\begin{equation}
    \frac{\partial X_{\xi}(t)}{\partial t}=(\nabla_x\nabla_{\xi}\phi)^{-1}\cdot \partial_t \nabla_{\xi}\phi,
\end{equation}
and the tangent vector of the geodesic \eqref{220927e1_3} is parallel to 
\begin{equation}
    \begin{bmatrix}
    (\nabla_x\nabla_{\xi}\phi)^{-1}\cdot \partial_t \nabla_{\xi}\phi\\
    1
    \end{bmatrix}
\end{equation}
We take a further derivative in $t$ on both sides of \eqref{220929e1_6}, and obtain 
\begin{equation}\label{220929e1_10}
    \begin{split}
        & \nabla_x\nabla_{\xi}\phi\cdot \frac{\partial^2}{\partial t^2} X_{\xi}(t)+
    \pnorm{
    \frac{\partial}{\partial t} X_{\xi}(t)
    }^T \cdot 
    \nabla^2_x\nabla_{\xi}\phi \cdot \frac{\partial}{\partial t} X_{\xi}(t)\\
 &   +
    2 \partial_t \nabla_x\nabla_{\xi}\phi\cdot \frac{\partial}{\partial t} X_{\xi}(t)+\partial^2_t \nabla_{\xi}\phi=0. 
    \end{split} 
\end{equation}
By moving terms, we obtain 
\begin{equation}\label{230222e2_21}
    \begin{split}
        & \frac{\partial^2}{\partial t^2} X_{\xi}(t)=2(\nabla_x \nabla_{\xi} \phi)^{-1} \partial_t \nabla_x \nabla_{\xi}\phi \cdot (\nabla_x \nabla_{\xi}\phi)^{-1} \partial_t \nabla_{\xi}\phi+(\nabla_x \nabla_{\xi} \phi)^{-1} \partial^2_t \nabla_{\xi}\phi\\
        &+ (\nabla_x\nabla_{\xi}\phi)^{-1}\cdot 
        \pnorm{
         \pnorm{
    \frac{\partial}{\partial t} X_{\xi}(t)
    }^T \cdot 
    \nabla^2_x\partial_{\xi_{\iota}}\phi \cdot \frac{\partial}{\partial t} X_{\xi}(t)
        }_{1\le \iota\le n-1}^T
    \end{split}
\end{equation}
To simplify notations, we will write $x=(x_1, x_2)$ and treat $t$ as the third spatial variable; we will use $\Gamma^{\mu}_{\alpha\beta}$, $1\le \alpha, \beta, \mu\le 3$ for Christopher symbols. If we parametrize geodesics by using the third spatial variable $t$, that is, if 
\begin{equation}
(x^1(t), x^2(t), x^3(t))
\end{equation}
is a geodesic with $x^3(t)=t$, then 
\begin{equation}\label{230222e2_25}
    \frac{d^2 x^\mu}{d t^2}=-
    \sum_{
    \alpha, \beta
    }\Gamma_{\alpha \beta}^\mu \frac{d x^\alpha}{d t} \frac{d x^\beta}{d t}+
    \sum_{
    \alpha, \beta
    }
    \Gamma_{\alpha \beta}^3 \frac{d x^\alpha}{d t} \frac{d x^\beta}{d t} \frac{d x^\mu}{d t},
\end{equation}
where $\mu=1, 2, 3$. We therefore have \begin{equation}\label{220930e1_14}
    \begin{split}
        \begin{bmatrix}
    \frac{\partial^2 X_{\xi}(t)}{\partial t^2}\\
    0
    \end{bmatrix}
    & +
    \begin{bmatrix}
    \Big(
    \Big(
    \frac{\partial X_{\xi}(t)}{\partial t}
    \Big)^T, 
    1\Big)\cdot  [\Gamma^1_{bc}]_{1\le b, c\le 3}\cdot  \Big(
    \Big(
    \frac{\partial X_{\xi}(t)}{\partial t}
    \Big)^T, 1\Big)^T\\
    \Big(
    \Big(
    \frac{\partial X_{\xi}(t)}{\partial t}
    \Big)^T
    , 1\Big)\cdot  [\Gamma^2_{bc}]_{1\le b, c\le 3}\cdot  \Big(
    \Big(
    \frac{\partial X_{\xi}(t)}{\partial t}
    \Big)^T, 1\Big)^T\\
    0
    \end{bmatrix}\\
    &-
    \Big(
    \Big(
    \frac{\partial X_{\xi}(t)}{\partial t}
    \Big)^T, 1\Big)\cdot  [\Gamma^3_{bc}]_{1\le b, c\le 3}\cdot  \Big(
    \Big(
    \frac{\partial X_{\xi}(t)}{\partial t}
    \Big)^T
    , 1\Big)^T
    \begin{bmatrix}
    \frac{\partial X_{\xi}(t)}{\partial t}
    \\
    0
    \end{bmatrix}
    =0, \ \ \forall t.
    \end{split}
\end{equation}
Recall how to pass from metrics to Christopher symbols: 
\begin{equation}\label{231023a_12}
    \partial_i g_{j k}=\sum_{l}
    \Gamma_{i j}^l g_{l k}+
    \sum_{
    l
    }
    \Gamma_{i k}^l g_{j l}, 
\end{equation}
or 
\begin{equation}
    \Gamma^i_{k l}=\frac{1}{2} \sum_m g^{i m}\left(\frac{\partial g_{m k}}{\partial x^l}+\frac{\partial g_{m l}}{\partial x^k}-\frac{\partial g_{k l}}{\partial x^m}\right).
\end{equation}
The first identity \eqref{231023a_12} can be written as 
\begin{equation}
    \partial_i \bfg=
    \begin{bmatrix}
    \Gamma^1_{i1}, \ & \Gamma^2_{i1}, \ & \Gamma^3_{i1}\\
    \Gamma^1_{i2}, \ & \Gamma^2_{i2}, \ & \Gamma^3_{i2}\\
    \Gamma^1_{i3}, \ & \Gamma^2_{i3}, \ & \Gamma^3_{i3}
    \end{bmatrix}\cdot \bfg
    +\bfg\cdot 
    \begin{bmatrix}
    \Gamma^1_{i1}, \ & \Gamma^1_{i2}, \ & \Gamma^1_{i3}\\
    \Gamma^2_{i1}, \ & \Gamma^2_{i2}, \ & \Gamma^2_{i3}\\
    \Gamma^3_{i1}, \ & \Gamma^3_{i2}, \ & \Gamma^3_{i3}
    \end{bmatrix}
\end{equation}
where 
\begin{equation}
    \bfg:=(g_{ij})_{1\le i, j\le 3}.
\end{equation}

\bigskip 

We now start constructing the phase function $\phi$. We start with the phase function
\begin{equation}
    \phi_0(x, t; \xi)=\inn{x}{\xi}+t\xi_1\xi_2+\frac{1}{2} t^2 \xi_1^2,
\end{equation}
which was constructed by Bourgain \cite{Bou91}. 
Let us take a point $(z_1, z_2, \tau)$ and compute the Christopher symbols at this point.

We first compute all the geodesics passing through this point. Suppose 
\begin{equation}
    \begin{split}
        & x_1+t \xi_2+t^2 \xi_1=w_1, \\
        & x_2+t \xi_1=w_2. 
    \end{split}
\end{equation}
Then we see that 
\begin{equation}
\begin{split}
    & X_1(t)=z_1+\tau \xi_2+\tau^2 \xi_1-t \xi_2-t^2\xi_1, \\
    & X_2(t)=z_2+\tau \xi_1-t\xi_1.
\end{split}
\end{equation}
All geodesics passing through $(z_1, z_2, \tau)$ can be written as 
\begin{equation}\label{231023a_19}
    (X_1(t), X_2(t), t). 
\end{equation}
By the equation \eqref{230222e2_25}, we obtain 
\begin{equation}\label{231023a_20}
\begin{split}
     -2\xi_1& =-(-\xi_2-2\tau \xi_1, -\xi_1, 1) (\Gamma^1_{\alpha\beta})_{1\le \alpha, \beta\le 3} (-\xi_2-2\tau \xi_1, -\xi_1, 1)^T\\
     & + (-\xi_2-2\tau \xi_1) (-\xi_2-2\tau \xi_1, -\xi_1, 1)  (\Gamma^3_{\alpha\beta})_{1\le \alpha, \beta\le 3} (-\xi_2-2\tau \xi_1, -\xi_1, 1)^T,
\end{split}
\end{equation}
and 
\begin{equation}\label{231023a_21}
\begin{split}
     0& =-(-\xi_2-2\tau \xi_1, -\xi_1, 1)(\Gamma^2_{\alpha\beta})_{1\le \alpha, \beta\le 3} (-\xi_2-2\tau \xi_1, -\xi_1, 1)^T\\
     & + (-\xi_1) (-\xi_2-2\tau \xi_1, -\xi_1, 1)  (\Gamma^3_{\alpha\beta})_{1\le \alpha, \beta\le 3} (-\xi_2-2\tau \xi_1, -\xi_1, 1)^T.
\end{split}
\end{equation}
Christopher symbols are not unique. If we take 
\begin{equation}
\begin{split}
  &   (\Gamma^1_{\alpha\beta})_{1\le\alpha, \beta\le 3}=
    \begin{bmatrix}
        0, & 0, & 0\\
        0, & 0, & -1\\
        0, & -1, & 0
    \end{bmatrix}, \\
    & 
    (\Gamma^2_{\alpha\beta})_{1\le\alpha, \beta\le 3}=
    \begin{bmatrix}
        0, & 0, & 0\\
        0, & 0, & 0\\
        0, & 0, & 0
    \end{bmatrix},  
    (\Gamma^3_{\alpha\beta})_{1\le\alpha, \beta\le 3}=
    \begin{bmatrix}
        0, & 0, & 0\\
        0, & 0, & 0\\
        0, & 0, & 0
    \end{bmatrix}
    \end{split}
\end{equation}
then \eqref{231023a_20} and \eqref{231023a_21} are satisfied. Eventually, one can check directly that under the metric 
\begin{equation}
    \bfg(x_1, x_2, t)=\begin{bmatrix}
        1, & -t, & -x_2\\
        -t, & t^2+1, & x_2 t\\
        -x_2, & x_2 t, & x_2^2+1
    \end{bmatrix}
\end{equation}
the curve \eqref{231023a_19} is a geodesic for all $(z_1, z_2, \tau)$ and $\xi$. \\

Next, let us modify Bourgain's example and show that metrics may not always exist. Take 
\begin{equation}
    \phi(x, t; \xi)=\inn{x}{\xi}+t\xi_1\xi_2+t^2 P(\xi_1),
\end{equation}
where $P(\xi_1)=O(|\xi_1|^2)$ is to be chosen. 
Consider all the curves determined by $\phi$ passing through the origin. They can be written as 
\begin{align}
    & X_1(t)= -t \xi_2-t^2 P'(\xi_1), \\
    & X_2(t)=-t \xi_1.
\end{align}
Recall that if metric tensor exists, then we always have \eqref{230222e2_25}. We check this equation at the origin. Note that $  \frac{d x^{\alpha}}{dt}$ stays the same as in Bourgain's example. However, the left hand side of \eqref{230222e2_25} changes dramatically. More precisely, 
\begin{equation}
\frac{d x^{1}}{dt}\Big|_{t=0}= 
-\xi_2, \ \ \frac{d x^{2}}{dt}\Big|_{t=0}=-\xi_1, \ \ \frac{d x^{3}}{dt}\Big|_{t=0}=1. 
\end{equation}
Therefore, the right hand side of \eqref{230222e2_25} at the origin is (at most) cubic in $\xi$. However, the left hand side of \eqref{230222e2_25} is 
\begin{equation}
(-2P'(\xi_1), 0, 0)^T.
\end{equation}
If for instance we take $P(\xi_1)=\xi_1^{5}$, then there does not exist any metric tensor to make \eqref{230222e2_25} hold for every $\xi$.

%
%
%
%
%
%
%
%
%
%
%
%
%

%

\end{document}